\documentclass{amsart}
\usepackage{amsmath,amssymb,amscd,url,color}

\usepackage[T1]{fontenc}
\usepackage{amssymb, amsthm, amsmath}
\usepackage[english]{babel}
\usepackage[latin1]{inputenc}

\usepackage{hyperref}

\newcommand{\m}{\ensuremath}

\newcommand{\fC}{\mathfrak{C}}

\newcommand{\setR}{\mathbb{R}}

\newcommand{\setN}{\mathbb{N}}

\newcommand{\setQ}{\mathbb{Q}}

\newcommand{\lam}{\m{\lambda}}
\newcommand{\om}{\m{\omega}}

\newcommand{\al}{\m{\alpha}}

\newcommand{\ph}{\m{\varphi}}
\newcommand{\ps}{\m{\psi}}

\newcommand{\then}{\m{\Rightarrow}}

\newcommand{\set}[1]{\ensuremath{\{#1\}}}
\newcommand{\seq}[1]{\ensuremath{\langle#1\rangle}}
\newcommand{\lseq}[3]{\ensuremath{\langle #1_{#2} : #2 < #3\rangle}}
\newcommand{\inseq}[3]{\ensuremath{\langle #1_{#2} : #2 \in #3\rangle}}

\newcommand{\card}[1]{\ensuremath{|#1|}}

\DeclareMathOperator{\tp}{tp}

\DeclareMathOperator{\lev}{lev} \DeclareMathOperator{\rkdp}{rk-dp}
\DeclareMathOperator{\alt}{alt}

\DeclareMathOperator{\tS}{S}

\newtheorem{theorem}{Theorem}[section]

\newtheorem{claim}[theorem]{Claim}

\newtheorem{corollary}[theorem]{Corollary}

\newtheorem{fact}[theorem]{Fact}
\newtheorem{lemma}[theorem]{Lemma}

\newtheorem{proposition}[theorem]{Proposition}

\newtheorem{observation}[theorem]{Observation}

\theoremstyle{definition}
\newtheorem{definition}[theorem]{Definition}
\newtheorem{example}[theorem]{Example}
\newtheorem{remark}[theorem]{Remark}
\newtheorem{question}[theorem]{Question}

\theoremstyle{remark}
\newtheorem{rmk}[theorem]{Remark}
\newtheorem{clm}{Claim}[theorem]



\newcommand{\0}{\emptyset}

\renewcommand{\phi}{\varphi}



\long\def\symbolfootnote[#1]#2{\begingroup%
\def\thefootnote{\fnsymbol{footnote}}\footnote[#1]{#2}\endgroup}

\makeatletter


%
%

\def\Ind#1#2{#1\setbox0=\hbox{$#1x$}\kern\wd0\hbox to 0pt{\hss$#1\mid$\hss}
\lower.9\ht0\hbox to 0pt{\hss$#1\smile$\hss}\kern\wd0}
\def\ind{\mathop{\mathpalette\Ind{}}}
\def\Notind#1#2{#1\setbox0=\hbox{$#1x$}\kern\wd0\hbox to 0pt{\mathchardef
\nn=12854\hss$#1\nn$\kern1.4\wd0\hss}\hbox to
0pt{\hss$#1\mid$\hss}\lower.9\ht0 \hbox to
0pt{\hss$#1\smile$\hss}\kern\wd0}
\def\nind{\mathop{\mathpalette\Notind{}}}

\def\thind{\mathop{\mathpalette\Ind{}}^{\text{\th}} }
\def\nthind{\mathop{\mathpalette\Notind{}}^{\text{\th}} }
\def\uth{{\rm{U}^{\text{\th}}}}
\def\ur{{\rm{U}}}

\def\po{\le_p}

\begin{document}

\newcommand{\Av}{\operatorname{Av}}

\newcommand{\cL}{\mathcal L}

\title{Additivity of the dp-rank}
\author{Itay Kaplan}

\address{Itay Kaplan \\
Mathematischen Instituts\\
Universit\"{a}t M\"{u}nster\\
Einsteinstra{\ss}e 62\\
48149 M\"{u}nster\\
Germany}

\author{Alf Onshuus}
\address{ Alf Onshuus \\
Departamento de Matem\'aticas \\
Universidad de los Andes \\
Cra 1 No. 18A-10, Edificio H \\
Bogot\'a, 111711 \\
Colombia}

\urladdr{ http://matematicas.uniandes.edu.co/aonshuus}

\author{Alexander Usvyatsov}
\address{ Alexander Usvyatsov\\ Universidade de Lisboa \\
  Centro de Matem\'{a}tica e Aplica\c{c}\~{o}es Fundamentais\\
  Av. Prof. Gama Pinto,2\\
  1649-003 Lisboa \\
  Portugal}

\urladdr{http://ptmat.fc.ul.pt/\textasciitilde alexus}

\thanks{The third author was supported by FCT grant SFRH / BPD / 34893 /
2007 and FCT research project PTDC/MAT/101740/2008.}

\thanks{We thank the organizers of the meeting ``Model theory: Around
Valued Fields and Dependent Theories'' at Oberwolfach in January
2010. Special thanks to Sergei Starchenko for valuable discussions
during and after the meeting. We would also like to thank John
Goodrick, Alfred Dolich, Pierre Simon and the anonymous referee
for some very helpful remarks and comments on earlier drafts of
this paper.}

\subjclass{Primary 03C45, 03C98; Secondary 05D99, 68R05}
 \keywords{dp-rank, dp-minimality, VC-density}


\def\Ind#1#2{#1\setbox0=\hbox{$#1x$}\kern\wd0\hbox to 0pt{\hss$#1\mid$\hss}
\lower.9\ht0\hbox to 0pt{\hss$#1\smile$\hss}\kern\wd0}
\def\ind{\mathop{\mathpalette\Ind{}}}
\def\Notind#1#2{#1\setbox0=\hbox{$#1x$}\kern\wd0\hbox to 0pt{\mathchardef
\nn=12854\hss$#1\nn$\kern1.4\wd0\hss}\hbox to
0pt{\hss$#1\mid$\hss}\lower.9\ht0 \hbox to
0pt{\hss$#1\smile$\hss}\kern\wd0}
\def\nind{\mathop{\mathpalette\Notind{}}}

\def\thind{\mathop{\mathpalette\Ind{}}^{\text{\th}} }
\def\nthind{\mathop{\mathpalette\Notind{}}^{\text{\th}} }
\def\ur{\text{U}}
\def\uth{\text{U}^{\text{\th}} }
\def\po{\leq_p}
\def\Cb{\text{Cb}}
\def\fC{\mathcal{C}}

\begin{abstract}
The main result of this article is sub-additivity of the dp-rank.
We also show that the study of theories of finite
dp-rank can not be reduced to the study of its dp-minimal types, and
discuss the possible relations between dp-rank and VC-density.
\end{abstract}

\maketitle

\section{introduction}\label{1}

This paper grew out of discussions that the authors had during a
meeting in Oberwolfach in January 2010, following a talk of
Deirdre Haskell, and conversations with Sergei Starchenko, on their
recent joint work with Aschenbrenner, Dolich and Macpherson
\cite{ADHMS}. Haskell's talk made it apparent to us that the
notion of VC-density (Vapnik-Chervonenkis density), investigated
in \cite{ADHMS}, is closely related to ``dependence rank''
(dp-rank) introduced by the third author in
\cite{Us}. Discussions with Starchenko helped us realize that
certain questions, such as additivity, which were (and still are,
to our knowledge) open for VC-density, may be approached more
easily in the context of dp-rank. This paper is the first step in
the program of investigating basic properties of dp-rank and its
connections with VC-density.

Whereas dp-rank is a
relatively new notion, VC-density and related concepts have been
studied for quite some time in the frameworks of machine learning,
computational geometry, and other branches of theoretical computer science.
Recent developments
point to a connection between
VC-density and dp-rank, strengthening the bridge between model theory and
these subjects. We believe that
investigating properties
of dp-rank is important for discovering the nature of this connection.
Furthermore, once this relation is better understood,
theorems
about dp-rank are likely to prove useful in the study
of finite and infinite combinatorics related to VC-classes.

\bigskip

Dp-rank was originally defined in \cite{Us} as an attempt to
capture how far a certain type (or a theory) is from having the
independence property. It also helped us to isolate a
\emph{minimality notion} of dependence for types and theories
(that is, having rank 1). We called this notion dp-minimality and investigated it in
\cite{OnUs}. Both dp-rank and dp-minimality were simplifications
of Shelah's various ranks from \cite{Sh863}, and appropriate
minimality notions. Our simplified notion of rank turned out to be
very close to Shelah's $\kappa_{\text{ict}}$, but localized to a
type.

A few characterizations of dp-minimality have since been given in
the literature, and similar equivalences can also be applied to
higher ranks. It seems clear at this point that part of the
strength of this concept is the interaction between those
equivalent definitions. We are mainly referring to the
``standard'' (syntactic) independent array definition as given in
\cite{Us,OnUs2} (see Definition \ref{dfn:strongdep}), which is very
useful when one wants to deal with formulas, and a very simple
``semantic'' variant (see Definition \ref{dp-rank}), which can be
found, for the dp-minimal case, in Simon's paper (\cite{Si}), but
which (as far as we know) has never been stated for a type with
rank greater than 1. In this paper, we will mainly work with the
semantic definition, which proves useful and convenient for our
purposes. However, throughout the paper we prove quite a few
different characterizations of dp-rank, especially in the finite
rank case; we summarize them in Theorem \ref{summary}.

\medskip

As the word ``rank'' indicates, dp-rank is a certain measure of
the ``size'' of a type. It would probably be more accurate to say
that dp-rank measures ``diversity'' of realizations of the type --
how much the realizations differ from each other, as can be seen
by external parameters. We will elaborate on this below. Just as
with most rank notions, one wonders whether it has basic
properties such as sub-additivity: the rank of a tuple should be
bounded by the sum of the ranks of its elements. This is the main
result of Section \ref{2}. As a corollary, we obtain global bounds
for alternation ranks of formulas in a theory of finite rank (so
in particular in a dp-minimal theory).

\medskip


Dp-rank is a notion which is related to weight in stable theories
(and motivated by it), and to certain more recent notions of
weight in dependent theories, e.g. \cite{OnUs}, \cite{OnUs2}, and
\cite{KaUs}. It is therefore natural to wonder whether dp-rank can
play a role similar to weight in dependent theories. Although this
is still a line of work which can be pursued (and probably
provides natural lines of research), in this paper we give
limitations to the analogy, showing that dp-rank fails to have
certain properties that one would hope for in a notion of weight,
in at least two significant ways.

One of our original intentions (influenced greatly by the weight
analogy) was to prove results for dp-minimal types, and then try
to extend them to higher ranks by induction, at least for the
finite rank case. 
The analogy with weight in stable (and even simple) theories led
us to ask whether every type of finite dp-rank can be
``analyzed'' to some extent by types of rank 1. This turns out to
be not the case, and provides the first limitation to the weight
comparison. Example \ref{example} presents a theory with types of
finite rank but \emph{no dp-minimal types}. 
This is a very
``tame'' strongly dependent theory (hence of ``finite weight'' -- in fact, of dp-rank 2),
but there are no types of ``weight'' 1 (whatever definition of
weight one chooses to use; see e.g. \cite{OnUs}).

Another direction where the analogy fails is the finite/infinite
correspondence. Hyttinen's results in \cite{Hy}, that yield a
decomposition a type of finite weight in a stable
theory into types of weight 1, also imply that no type has
infinite but ``rudimentarily finite'' weight. In other words, if a
type has weight $\aleph_0$, then there exists an infinite set
witnessing this. In Subsection \ref{sub:example} (particularly,
Theorem \ref{thm:example}) we observe that this fails to be the
case for dp-rank, even in stable theories. The example presented
there also answers negatively an analogous question concerning
Adler's notion of burden in \cite{Ad2} (since burden is
essentially the same as dp-rank in dependent theories\footnote{If
either is finite then the two coincide; for infinite cardinals the
dp-rank of a type may be the successor of the burden of the
type.}). Specifically, this is an example of an $\aleph_0$-stable
theory (in particular, superstable) such that every type over
$\emptyset$ has infinite dp-rank. Since in a superstable theory
weight of any given complete type is finite, Theorem
\ref{thm:example} exemplifies a very important difference between
weight and dp-rank: weight only takes into account nonforking
extensions of the type; so when passing to a forking extension,
weight may grow. Dp-rank does not distinguish between different
kinds of extensions, so it can only go down when the set of
parameters is increased (as is the case with most rank/dimension
notions).

\medskip

In a sense, with the notion of dp-rank, we capture a very
particular aspect of the size of the type, and our thesis is that
it might be ``better'' to think of dp-rank in relation to finite
combinatorial invariants of Vapnik-Chervonenkis (VC) classes,
particularly, VC-density, instead of in terms of weight.

%

VC-density is one of the possible measures of growth rate of types
over finite sets. Specifically, assuming that $\Delta(x,y)$ is a
finite set of dependent formulas (which is equivalent to the
growth rate of the number of finite $\Delta$-types being
polynomial rather than exponential), VC-density bounds the degree
of the corresponding polynomial. So VC-density 1 corresponds to
linear growth of the number of $\Delta$-types over finite sets. We
prove in Section \ref{3} that one way of looking at dp-rank is the
following: dp-rank of a type $p$ corresponds to the degree of a
polynomial that measures how many types can a realization
$c\models p$ realize over finite \emph{indiscernible sequences}.
In other words, dp-rank of $p$ is a measure of the number of
realizations of $p$ that have different types over finite
indiscernible sequences.

%
%
%

So in particular dp-minimality means that realizations of $p$
realize only order of $n$ types over any given indiscernible
sequence of length $n$. This is equivalent, as is shown in Section
\ref{sub:dp_rank_and_counting_finite_types}, to the following:
subsets of indiscernible sequences that a realization of $p$ can
definably pick are simply intervals. These intervals can be, of
course, of length 1 (that is, singletons). The important
observation here is that a dp-minimal element can not pick
(definably) a finite tuple of size bigger than 1 from an arbitrary
indiscernible sequence. Or, more precisely, its alternation rank
can not be bigger than 2. In this respect the behavior of a
dp-minimal element resembles an element in either an o-minimal
theory, or the theory of equality.

A number of distinctions between the notions of VC-density and
dp-rank should be pointed out. The obvious ones are that we only
count number of types over indiscernible sequences (as opposed to
arbitrary finite sets), and restrict ourselves to realizations of
a given type (the second one is not important -- one can look at
dp-rank of a partial type as well). Another difference is that,
when calculating dp-rank, we do not restrict the set of formulas
that one is allowed to use to a finite set. This is why, even in a
strongly dependent theory, one can end up with types of dp-rank
$\omega$ (one obtains more and more types over indiscernible
sequences by changing the formulas), one more thing that Theorem
\ref{thm:example} exemplifies.

However, dp-rank is a very natural model theoretic analogue of VC-density.
Part of the strength of model theoretic techniques is the ability to approximate complex
phenomena in better behaved structures. Indiscernible sequences have
already proved very helpful for such approximations in various contexts.
Seeking connections between VC-density and dp-rank is another implementation of this idea.

\bigskip

Since we work with the semantic definition of dp-rank, we need
some technical results on indiscernible sequences which are quite
interesting on their own. In Section
\ref{sec:extending_indiscernible_sequences} we prove a proposition
(Proposition \ref{unique complete 2}) which provides a
``consistent'' way to extend indiscernible sequences in an
arbitrary theory. Specifically, in the proofs in Section \ref{2}
we are sometimes faced with the following situation: a sequence
$I$ is indiscernible over various subsets of a set $B$, but not
(necessarily) over $B$, and we would like a uniform way of
extending $I$ to a longer sequence with the same properties. If
the theory is assumed to be dependent (and $I$ is unbounded) one
can just take the average type of $I$ over $B$. However, we are
not assuming dependence, and it seems of independent interest to
find a general technique allowing this (and more) in an arbitrary
theory. Here Shelah's general notion of
average type with respect to an ultrafilter comes in handy. 

%



\subsection{Structure of the paper}

We begin Section \ref{1.5} with definitions, characterizations, and
basic properties. Then we proceed to a few examples, which point
out what one can and can not expect from dp-rank.

In Section \ref{sec:extending_indiscernible_sequences} we develop
a consistent way of extending mutually indiscernible sequences,
which serves us in Section \ref{2}.


In Section \ref{2} we will prove the main result of the paper, the
sub-additivity of the dp-rank. We start with the dp-minimal case,
then proceed to types of finite rank, and finally the infinite
case. There was no need, in fact, to give a separate proof for
dp-minimal types: the general finite case can be easily modified
to include rank 1 as well. However, the proof for dp-minimal types
is much less involved, so we include it in order to exemplify the
general principles that are used, and then proceed to the finite
case by induction. The proof for infinite ranks is different, but
easier -- infinite combinatorics is more flexible, and calls for
much less precise computations. We conclude the section with some corollaries,
such as global bounds
on alternation ranks of formulas.

In Section \ref{3} we discuss what is known about the relation
between VC-density and dp-rank. Theorem \ref{summary} summarizes all
the main equivalent ways of looking at dp-rank. We also pose a few questions and
set up a framework for future work.

\bigskip

Throughout the paper, we will work in a monster model of an
arbitrary first order theory $T$. In particular, although the main
object of study is a ``dependence rank'', at no point do we assume
that $T$ is dependent.

We will not distinguish between singletons and finite tuples in
the notation. For example, when writing $\ph(x)$, or ``$a$ is in
the sort of $x$'', if not specified otherwise, $x$ and $a$ could
be tuples.

\section{Dp-rank: definitions and basic properties}\label{1.5}\label{sub:dp_rank_and_counting_finite_types}

We begin with the definition of the main notion investigated in this paper.


\begin{definition}\label{dp-rank}
Let $p(x)$ be a partial (consistent) type over a set $A$. We
define the dp-rank of $p(x)$ as follows.

\begin{itemize}

    \item The dp-rank of $p(x)$ is
    always greater or equal than 0. Let $\mu$ be a cardinal. We will say that $p(x)$ has dp-rank $\le \mu$
    (which we write $\rkdp(p)\le\mu$) if given any realization $a$ of $p$ and any $1+\mu$
    mutually $A$-indiscernible sequences, at least one of them is
    indiscernible over $Aa$.


\item We say that $p$ has dp-rank $\mu$ (or
$\rkdp(p)=\mu$) if it has dp-rank $\le \mu$, but it is not
the case that it has dp-rank $\le \lam$ for any
$\lam<\mu$.


\item We call $p$ \emph{dp-minimal}  if it has dp-rank 1. 

\item We call $p$ \emph{dependent} if dp-rank of $p$ 
exists, that is, if it is an ordinal. In this case we write
$\rkdp(p)<\infty$. Otherwise we write $\rkdp(p)=\infty$.

\item We call $p$ \emph{strongly dependent} if
$\rkdp(p)\le \om$.

\end{itemize}


\end{definition}

\begin{rmk}
	It may seem that dp-rank of $p$ depends on the set $A$; however, we will see in Remark \ref{rem:doesnotdepend} below that this is not the case. Hence all the notions in Definition \ref{dp-rank} above are well defined. 
\end{rmk}

%

The following is easy and standard.

\begin{remark}
The following hold for any (partial) type $p(x)$ and a set $A$.
\begin{enumerate}
\item
$p$ has dp-rank 0 if and only if it is algebraic.

\item
$p$ is dependent if and only if the $\rkdp(p)\le |T|^+$.
\item
If $p$ is a type over a set $A$, and $p'$ is a type over a set $B\supseteq A$ that extends $p$, then 
$\rkdp(p')\le\rkdp(p)$. 
\end{enumerate}
In particular, if $p$ is dp-minimal, then any
extension of it is either dp-minimal or algebraic.
\end{remark}

Definition \ref{dp-rank} is a nice semantic characterization of
the notion of dp-rank. We find it more convenient for the purposes
of this paper than the syntactic definition in \cite{OnUs2}. It is
also much easier to grasp, in case one is unfamiliar with the
concept. However, when working with formulas, it is useful to have
a more syntactic notion, and we would like to prove that our
``soft'' characterization is equivalent to the original one. In
case the reader is unwilling to deal with technical concepts, it
is possible to skip the following definition and Proposition
\ref{prp:equivalence} in the first reading. These will not be used
almost at all in the main body of the paper (Sections
\ref{sec:extending_indiscernible_sequences} and \ref{2}), but they
are key for understanding important characterizations of dp-rank
in the finite case and the connection to VC-density (Proposition
\ref{thm:rank_counting} and Section \ref{3}), as well as the fact that dp-rank 
of a type does not depend on the set over which it is calculated. 

The following definitions
were motivated by the original definition of strong dependence by
Shelah (see e.g. \cite{Sh863}) and appear in \cite{Us} and
\cite{OnUs}.

\begin{definition}\label{dfn:strongdep}
$\left.\right.$
    A \emph{randomness pattern} of depth $\kappa$ for a (partial) type $p(x)$ over a set $A$ is an
    array $\langle b_i^\alpha \colon i<\omega\rangle_{\alpha<\kappa}$ and formulae $\phi_\alpha(x,y_\alpha)$ for
    $\alpha<\kappa$ such that:
    \begin{enumerate}
    \item
        the sequences $I^\alpha = \langle b^\alpha_i\rangle_{i<\omega}$ are
        mutually indiscernible over $A$; that is, $I^\alpha$ is
        indiscernible over $AI^{\neq \alpha}$,
    \item
        $length(b^\alpha_i) = length(y_\alpha)$,
    \item
        for every $\eta \in {}^\kappa\omega$, the set
        $$ \Gamma_\eta = \{\phi_\alpha(x,b^\alpha_\eta)\}_{\alpha < \kappa} \cup
        \{\neg\phi_\alpha(x,b^\alpha_i)\}_{\alpha<\kappa, i<\omega, i\neq \eta(\alpha)}$$
        is consistent with $p$.
    \end{enumerate}
	We will omit $A$ if it is clear from the context.
\end{definition}


\smallskip
The following is standard. 

\begin{lemma}\label{lem:randomness}
	Let $p$ be a (partial) type over a set $A$. Then there is a randomness pattern of depth $\kappa$ 
	over $A$ if and only if there exists an array $\langle b_i^\alpha \colon i<\omega\rangle_{\alpha<\kappa}$ and formulae $\phi_\alpha(x,y_\alpha)$ for
    $\alpha<\kappa$ that satisfy clauses (ii) and (iii) of Definition \ref{dfn:strongdep}.

\end{lemma}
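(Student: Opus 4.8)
The plan is to prove the nontrivial direction: given an array $\langle b_i^\alpha : i<\omega\rangle_{\alpha<\kappa}$ together with formulae $\phi_\alpha(x,y_\alpha)$ satisfying only clauses (ii) and (iii) — rows of the correct length with all the consistency requirements $\Gamma_\eta$ — we must find a new array witnessing a full randomness pattern, i.e. one whose rows are additionally \emph{mutually} indiscernible over $A$. The reverse direction is of course immediate since a randomness pattern satisfies (ii) and (iii) by definition. So the real content is an extraction/reindexing argument that upgrades a ``random'' array into a ``random and mutually indiscernible'' array.

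First I would pass to a larger index set. Fix a regular cardinal $\lambda$ large enough (say $\lambda = \beth_{(2^{|T|+|A|+\kappa})^+}$, or any bound supporting the Erd\H{o}s--Rado style arguments below), and stretch each row $I^\alpha$ to a sequence indexed by $\lambda$ while preserving (iii): this is possible by compactness, since for any finite $\eta$-data the consistency of the relevant finite fragments of $\Gamma_\eta$ is a first-order property, and one can demand that the stretched array still satisfy clause (iii) for all $\eta \in {}^\kappa\lambda$ — concretely, stretch one row at a time, using compactness over the (fixed) rest of the array together with $A$ and the parameters of the $\phi_\alpha$, keeping track of the type describing which Boolean combinations of $\phi_\alpha$-instances from the new row are consistent with $p$ and the other rows. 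The point of stretching is to have enough room to extract long indiscernible subsequences while retaining the randomness property (which, being about consistency of finitely generated subsets, is preserved under passing to sub-arrays).

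Next comes the core extraction. Working row by row (an induction on $\alpha<\kappa$, or a simultaneous argument), I would use the standard fact that from a sufficiently long sequence one can extract a long subsequence indiscernible over a given (not too large) parameter set: extract from $I^0$ a subsequence of length $\lambda$ that is indiscernible over $A$ together with all of $I^{\neq 0}$; then from $I^1$ (restricted to the indices used so far — here regularity and the size bound keep things long enough) a subsequence indiscernible over $A$, the already-fixed part of $I^0$, and $I^{\neq 0,1}$; and so on. After running through all $\kappa$ rows (at limit stages intersecting the chosen index sets, which is why one wants $\lambda$ regular and $> \kappa$), one arrives at a sub-array each of whose rows is indiscernible over $A$ and the union of the other rows — i.e. mutually indiscernible over $A$, which is clause (i). Throughout, the sub-array still satisfies (ii) trivially and (iii) because $\Gamma_\eta$ for the sub-array is a subset of some $\Gamma_{\eta'}$ for the original stretched array, hence still consistent with $p$. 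Restricting each row back down to order type $\omega$ gives the desired randomness pattern.

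The main obstacle I expect is the bookkeeping in the iterated extraction across $\kappa$ many rows: one must ensure that the index set surviving after extracting indiscernibility in row $\alpha$ is still large enough to carry out the extractions in rows $\beta>\alpha$, and that at limit ordinals $<\kappa$ the running intersection of index sets remains of size $\lambda$. This is exactly what a regular $\lambda$ with $\lambda>\kappa$ and a suitable Erd\H{o}s--Rado bound handles, but it is the step where one actually has to be careful rather than wave hands — everything else (compactness to stretch, monotonicity of the randomness property under sub-arrays, the equivalence of ``indiscernible over all the other rows simultaneously'' with ``mutually indiscernible'') is routine. Since the lemma is flagged as ``standard'', I would keep the write-up brief, citing the standard extraction lemma for indiscernibles and emphasizing only that the randomness clause (iii) is inherited by sub-arrays and preserved under the compactness stretch.
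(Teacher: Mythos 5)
Your overall architecture --- keep the trivial direction, and for the converse stretch the rows by compactness and then upgrade to mutual indiscernibility while observing that clause (iii) only involves finite fragments and so survives the upgrade --- is exactly the paper's (its proof is the same two-step sketch: stretch by compactness, then ``applying Ramsey's Theorem and compactness, one obtains \emph{new} sequences for which clause (i) holds as well''). But the specific tool you invoke for the upgrade does not work. First, there is no ZFC-provable partition relation that extracts an infinite \emph{subsequence} that is fully order-indiscernible (all arities simultaneously) from a long sequence: Erd\H{o}s--Rado handles one arity and finitely many formulas at a time, and gluing the arities together is precisely where one must pass to \emph{new} elements by compactness (realizing the EM-type) rather than to a subsequence; genuine infinite indiscernible subsequences require Erd\H{o}s-cardinal-type hypotheses. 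Second, and independently, your parameter set at stage $\alpha$ is $A$ together with \emph{all} the other rows, which has cardinality at least $\lambda$, the length of the row being refined; so no choice of $\lambda$ makes a row ``sufficiently long relative to the parameter set,'' and this cannot hold simultaneously for all $\kappa$ rows, since each would have to be strictly longer than the union of the others. The bookkeeping you flag as the main obstacle is therefore not something a large regular $\lambda$ overcomes; it is the step that fails.

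The repair is what the paper's phrase ``new sequences'' points to: use the Ramsey-plus-compactness standard lemma, which, for an infinite sequence $I$ and a parameter set $B$ of \emph{arbitrary} cardinality, produces a $B$-indiscernible sequence $J$ of any prescribed length such that every finite increasing tuple from $J$ realizes, over each finite subset of $B$, a type realized by an increasing tuple from $I$. Replacing the rows one at a time by such $J$'s (over $A$ together with the current versions of the other rows) yields mutual indiscernibility, and clause (iii) transfers because its consistency with $p$ is checked on finite fragments, each of which has the same type over $A\supseteq\dom(p)$ as a fragment of the original array, for which the corresponding finite piece of a suitable $\Gamma_{\eta'}$ was assumed consistent. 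With that substitution your argument goes through (and the preliminary stretching, which the paper also performs, becomes optional, since the standard lemma needs only infinite rows).
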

\begin{proof}
	Without loss of generality $A=\emptyset$ (note that $A$ contains the domain of $p$).
	Given an array satisfying clauses (ii) and (iii) of Definition \ref{dfn:strongdep}, we 
	may assume by compactness that the all the $\kappa$ sequences $\langle b_i^\alpha  \rangle$ are as long as we wish, $\langle b_i^\alpha \colon i<\lambda \rangle$ for $\lambda$ arbitrarily big. Applying Ramsey's Theorem and compactness, one obtains new sequences for which clause (i) of the definition of a randomness pattern holds as well. 
\end{proof}

One consequence of the Lemma is the following: let $p$ be a type, and let $A,B$ be sets, both containing the domain of $p$. Then there is a randomness pattern of depth $\kappa$ for $p$ over $A$ if and only if there exists such a pattern over $B$.

\medskip

In \cite{OnUs2} we defined dp-rank of a type $p(x)$ as the
supremum of all $\kappa$ such that there is a randomness pattern
of depth $\kappa$ for $p(x)$. The following Proposition shows that
Definitions \ref{dp-rank} above are equivalent to the original
ones.

The first appearance of any such equivalence appeared for the
dp-minimal case in Lemma 1.4 of \cite{Si}. We do not know that
anyone has generalized this even for finite dp-ranks (or
randomness patterns of finite depth).




\begin{proposition}\label{prp:equivalence}
The following are equivalent for a complete type $p(x)$ over $A$.
Notice that $\kappa$ below may be a finite cardinal.
\begin{enumerate}

\item There is a randomness pattern of depth $\kappa$ for $p(x)$
over $A$.

\item It is not the case that the dp-rank of $p(x)$ is less than
or equal to $\kappa$.

\item There exists a set $\mathcal I:=\{I^j\}:=\{\langle
a_i^j\rangle_{i\in I} \mid j\in\kappa\}$ of $\kappa$ infinite
mutually indiscernible sequences over $A$, and a realization $c$
of $p(x)$ such that for all $j$ there are $i_1,i_2$ such that
$\tp(a_{i_1}^j/Ac)\neq \tp(a_{i_2}^j/Ac)$.

\end{enumerate}
\end{proposition}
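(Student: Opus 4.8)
The plan is to prove the cycle of implications $(2)\Rightarrow(3)\Rightarrow(1)\Rightarrow(2)$, since $(3)$ is essentially an unwinding of the negation of the semantic definition and $(1)\Leftrightarrow(2)$ is the substantive equivalence between the syntactic and semantic points of view.

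First I would handle $(2)\Rightarrow(3)$, which should be almost immediate. Negating ``$\rkdp(p)\le\kappa$'' in Definition \ref{dp-rank} says: there exist $\kappa$ mutually $A$-indiscernible sequences and a realization $c$ of $p$ such that \emph{none} of them is indiscernible over $Ac$. For an infinite indiscernible sequence, failing to be indiscernible over $Ac$ means exactly that $c$ distinguishes two of its elements (or tuples of elements) by type; here, since we take the sequences to be indiscernible sequences of single tuples $a_i^j$, one extracts $i_1, i_2$ with $\tp(a^j_{i_1}/Ac)\ne\tp(a^j_{i_2}/Ac)$. (One should be slightly careful: non-indiscernibility over $Ac$ a priori only gives a \emph{finite sub-tuple} on which indiscernibility fails; but a standard argument — using that the original sequence is $A$-indiscernible, so all same-length sub-tuples have the same type over $A$, together with a pigeonhole/compactness step — lets one reduce to a single pair of elements. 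This is the one genuinely fiddly point in this direction, though it is routine.)

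Next, $(3)\Rightarrow(1)$. Given the mutually $A$-indiscernible sequences $I^j = \langle a^j_i\rangle$ and $c\models p$ with, for each $j$, two elements split by type over $Ac$, I would first pass (by indiscernibility and Ramsey, exactly as in the proof of Lemma \ref{lem:randomness}) to sequences that are ``very'' mutually indiscernible and arbitrarily long. Then for each $j$ pick a formula $\phi_j(x,y_j)$ and an element $a^j_{i_0}$ realizing $\phi_j(x,a^j_{i_0})\wedge\neg\phi_j(x,a^j_{i_1})$ relative to $c$ — i.e., a formula witnessing the type-difference — and, after re-indexing, arrange that along the sequence $I^j$ the formula $\phi_j(c,a^j_i)$ holds for exactly one value of $i$ and fails elsewhere; this is where the length/Ramsey argument is used, to make the ``pattern of truth values'' of $\phi_j(c,-)$ along $I^j$ as clean as a single ``hit.'' Then the array $\langle a^j_i\rangle$ together with the $\phi_j$ is visibly a randomness pattern of depth $\kappa$: clause (i) is mutual indiscernibility, clause (ii) is a length bookkeeping matter, and clause (iii) — consistency of $\Gamma_\eta$ with $p$ for every $\eta\in{}^\kappa\omega$ — follows from mutual indiscernibility, which lets us slide the ``hit'' in each coordinate $j$ independently to position $\eta(j)$ and appeal to compactness.

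Finally, $(1)\Rightarrow(2)$: a randomness pattern of depth $\kappa$ directly produces $\kappa$ mutually $A$-indiscernible sequences $I^j$ such that some realization $c$ of $p$ (namely a realization of $p\cup\Gamma_\eta$ for a fixed $\eta$, say $\eta\equiv 0$) has $\phi_j(c,a^j_0)$ true and $\phi_j(c,a^j_i)$ false for $i\ne 0$, so that $c$ distinguishes $a^j_0$ from $a^j_1$ over $Ac$ for every $j$; hence no $I^j$ is indiscernible over $Ac$, i.e. $\rkdp(p)\not\le\kappa$. I expect the main obstacle to be the bookkeeping in $(3)\Rightarrow(1)$ — normalizing, via Ramsey and compactness, the truth pattern of each witnessing formula along its sequence to a single ``hit'' while simultaneously preserving mutual indiscernibility across all $\kappa$ sequences — but this is exactly the kind of argument already invoked in Lemma \ref{lem:randomness}, so it should go through. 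A minor auxiliary point worth stating carefully is that in Definition \ref{dp-rank} one may, without loss of generality, restrict to indiscernible sequences \emph{of tuples of fixed length} matching the relevant $y_j$'s, so that the semantic and syntactic notions of ``coordinate'' line up.
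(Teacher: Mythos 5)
Your overall architecture matches the paper's: the equivalence of (ii) and (iii) is essentially an unwinding of the semantic definition (the paper simply cites Simon's Lemma 1.4 for it), (i)$\Rightarrow$(iii) is immediate, and the substantive content sits in (iii)$\Rightarrow$(i). One remark on the easy part: in your (2)$\Rightarrow$(3), the suggested reduction of ``$I^j$ is not indiscernible over $Ac$'' to ``two single entries of $I^j$ have different types over $Ac$'' via pigeonhole is not correct as stated. It can happen that all entries have the same type over $Ac$ while some pair of entries does not (e.g.\ a dense order in the language of betweenness, with $c$ lying between two consecutive entries of an increasing indiscernible sequence). The correct move is to replace $I^j$ by a mutually indiscernible sequence of $n$-tuples extracted from it; this is exactly why condition (iii) allows the entries $a^j_i$ to be tuples.

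The genuine gap is in (3)$\Rightarrow$(1). Your plan is to normalize, via Ramsey and re-indexing, the truth pattern of $\varphi_j(c,-)$ along $I^j$ to a single ``hit,'' and then slide that hit to an arbitrary position $\eta(j)$ by indiscernibility. This fails when the truth pattern is a \emph{cut}, say $\varphi_j(c,a^j_i)$ holds iff $i<r$, which is what happens in the canonical example: dense linear order, $I$ increasing, $\varphi(x,y)=y<x$, $c$ in the middle of $I$. There, any subsequence on which $\varphi_j(c,-)$ (or its negation) holds exactly once has the exceptional element at an \emph{endpoint}; indiscernibility then only yields consistency of patterns of the form ``initial segment true, final segment false,'' and the configuration required by $\Gamma_\eta$ for an interior $\eta(j)$ is outright inconsistent for this formula, so no amount of sliding can produce it. Moreover, a Ramsey re-extraction as in Lemma \ref{lem:randomness} produces new sequences with the right EM-type but severs the link to the fixed realization $c$, which is the entire content of hypothesis (iii). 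The paper's proof is organized around exactly this obstacle: it distinguishes the case where one of the two truth-value sets $\{i\mid\varphi^j(c,a^j_i)\}$, $\{i\mid\neg\varphi^j(c,a^j_i)\}$ is both coinitial and cofinal in a $\mathbb{Z}$-indexed sequence (then a single \emph{interior} exception can be isolated and slid), from the cut case, where one must \emph{change the formula}: pass to the sequence of consecutive pairs and the formula $\varphi^j(x,y_1)\wedge\neg\varphi^j(x,y_2)$, whose truth pattern on the paired sequence does have a single interior hit. Your proposal never contemplates changing the witnessing formula, and without some such step the argument cannot be completed.
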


\begin{proof}
The proof of the equivalence of (ii) and (iii) in \cite{Si}
works exactly for the general case. Also, it is clear that (i)
implies (iii). 

\medskip

Assume (iii) and we will prove (i). Let $\mathcal I_j:=\langle
a_i^j\rangle$ and we will assume that all such $I^j$ are indexed
by $\mathbb Z$, and let $\phi^j$ be the formula such that
$\phi^j(c, a_{i_1})$ and $\neg \phi^j(c,a_{i_2})$ holds.

\begin{clm}
If $\{i\mid \phi^j(c, a^j_{i})\}$ is both coinitial and cofinal,
then there is a subsequence $I_0^j$ of $I^j$ such that (after
re-enumerating the elements) $I_0^j:=\langle
a^{0,j}_i\rangle_{i\in \mathbb Z}$ and $\neg \phi^j(c,a^{0,j}_i)$
holds if and only if $i=0$.
\end{clm}

\begin{proof}
The construction of $I_0^j$ is immediate from the definition.
\end{proof}

Notice that if in every sequence $I^j$ we have that either $\{i
\mid \phi^j(c, a^j_{i})\}$ or $\{i\mid \neg \phi^j(c, a^j_{i})\}$
is coinitial and cofinal, then we can replace $I^j$ by the
subsequence $I_0^j$ and (replacing $\phi^j(x,y)$ for $\neg
\phi^j(x,y)$ if necessary) we would have an instance of (i).

Now, if for some $j$ we have that both $\{i \mid \phi^j(c,
a^j_{i})\}$ and $\{i\mid \neg \phi^j(c, a^j_{i})\}$ are not
coinitial and not cofinal, then we have that, for example, $\{i
\mid \phi^j(c, a^j_{i})\}$ is coinitial and $\{i\mid \neg
\phi^j(c, a^j_{i})\}$ is cofinal. In this case, if we define
$\phi^j_0:=\phi^j(x,y_1)\wedge \neg \phi^j(x,y_2)$ and $I^j_0$ as
a sequence $\langle a^j_{2i},a^j_{2i+1}\rangle_{i\in \mathbb Z}$,
we would preserve the mutual indiscernibility, we would have
instances of both $\phi^j_0(c,a^j_{2i},a^j_{2i+1})$ and $\neg
\phi^j_0(c,a^j_{2i},a^j_{2i+1})$, and $\{ i\mid \neg
\phi^j(c,a^j_{2i},a^j_{2i+1})\}$ would be coinitial and cofinal.
Applying the claim to all such sequences we would have an instance
witnessing (i).
\end{proof}

\begin{remark}\label{rem:doesnotdepend}
	It follows from Lemma \ref{lem:randomness} and the equivalence of (i) and (ii) in Proposition \ref{prp:equivalence} that 
	dp-rank of a type \emph{does not depend on the set $A$} over which it is computed. 
	In other words, the notion $\rkdp(p)$ in Definition \ref{dp-rank} is well-defined. 
	%
\end{remark}



\medskip

In the finite rank case, we can prove another characterization of dp-rank, in terms of a natural generalization
of the notion of alternation rank.
Given a formula $\varphi(x,y)$, we define
the $p$-alternation rank of $\varphi(x,y)$ over $A$ as follows:
$\alt^{p(x)}_A(\varphi(x,y))\ge k$ if there exists an
$A$-indiscernible sequence $I$ and $c\models p$ such that the
truth value of $\varphi(c,y)$ has $k$ alternations in $I$. The
$p$-alternation rank of $\varphi(x,y)$ over $A$ is the maximal $k$
(if exists) such that $\alt^{p(x)}_A(\varphi)\ge k$. As usual, if
$p\in\tS(A)$, we may omit $A$.

\begin{proposition}\label{thm:rank_counting}
    The following are equivalent for a partial type $p(x)$ over $A$ and $k<\omega$:
    \begin{enumerate}
        \item $\rkdp(p) \ge k$.
        \item There exists a formula $\ph(x,y)$ and an $A$-indiscernible sequence $I$ in the sort of $y$  such that for every subset $I' \subseteq I$ of size $k$,  there is a $c\models p$ such that $\ph(c,y)\cap I=I'$.
        \item There exists a formula $\varphi(x,y)$ such that $\alt^{p(x)}_A(\ph(x,y))\ge
        2k$.
    \end{enumerate}
\end{proposition}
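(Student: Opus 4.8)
\emph{Overall strategy.} The plan is to prove the cycle $(i)\Rightarrow(ii)\Rightarrow(iii)\Rightarrow(i)$, using Proposition \ref{prp:equivalence} to pass between $(i)$ and the existence of a randomness pattern of depth $k$. (If $p$ is only partial one first replaces it by a completion: $\rkdp(p)$ equals the maximum of $\rkdp(q)$ over completions $q\supseteq p$ in the finite case, and any witness of $(ii)$ or $(iii)$ relative to $q$ is one relative to $p$.) Two elementary facts about indiscernibles will be used: \emph{(a)} if $\langle b_i^\alpha : i<\om\rangle_{\alpha<k}$ is a mutually $A$-indiscernible array then its sequence of columns $\langle (b_i^0,\dots,b_i^{k-1}) : i<\om\rangle$ is $A$-indiscernible; \emph{(b)} if $J$ is an infinite $A$-indiscernible sequence cut into $k$ consecutive infinite segments $J=J_0+\dots+J_{k-1}$, then the $J_\alpha$ are mutually $A$-indiscernible. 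Both are proved by changing one coordinate (resp.\ one segment) at a time: for \emph{(a)}, each row is indiscernible over the union of the others, so one moves the $i$-indices to the $j$-indices row by row; for \emph{(b)}, an increasing subtuple of $J_\alpha$ inserted between fixed tuples from the lower and higher segments is again an increasing subtuple of $J$, so indiscernibility of $J$ applies.

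\emph{$(i)\Rightarrow(ii)$ and $(ii)\Rightarrow(iii)$.} By Proposition \ref{prp:equivalence}, $\rkdp(p)\ge k$ yields a randomness pattern of depth $k$: an array $\langle b_i^\alpha\rangle_{i<\om,\ \alpha<k}$ and formulas $\phi_\alpha(x,y_\alpha)$ such that for each $\eta\in{}^k\om$ some $c_\eta\models p$ realizes $\Gamma_\eta$. Let $J$ be the column sequence, which is $A$-indiscernible by \emph{(a)}, and set $\ps(x,\bar z):=\bigvee_{\alpha<k}\phi_\alpha(x,z_\alpha)$. For injective $\eta$ the element $c_\eta$ satisfies $\phi_\alpha(c_\eta,b_i^\alpha)$ exactly when $i=\eta(\alpha)$, so $\ps(c_\eta,\bar z)$ picks out of $J$ precisely the $k$ columns indexed by the range of $\eta$; as $\eta$ ranges over the injections $k\to\om$ these ranges exhaust the $k$-element subsets of $J$, so $\ps,J$ witness $(ii)$. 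For $(ii)\Rightarrow(iii)$: given such $\ph,I$, pick (using that $I$ is infinite) elements $a_{i_1}<\dots<a_{i_k}$ of $I$ together with $e_0<a_{i_1}$, elements $e_j$ strictly between $a_{i_j}$ and $a_{i_{j+1}}$ for $1\le j<k$, and $e_k>a_{i_k}$. If $c\models p$ with $\ph(c,y)\cap I=\{a_{i_1},\dots,a_{i_k}\}$, then $\ph(c,y)$ strictly alternates along the $A$-indiscernible subsequence $e_0,a_{i_1},e_1,\dots,e_{k-1},a_{i_k},e_k$, i.e.\ has $2k$ alternations, so $\alt^{p(x)}_A(\ph)\ge 2k$.

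\emph{$(iii)\Rightarrow(i)$.} From $\alt^{p(x)}_A(\ph)\ge 2k$, a compactness argument (stretch the witnessing $A$-indiscernible sequence and fatten each of its $2k+1$ maximal constant blocks) produces an $A$-indiscernible sequence $\langle a_q : q\in\setQ\rangle$ such that for some — hence, by homogeneity of $(\setQ,<)$, every — choice of rationals $q_1<\dots<q_{2k}$ there is $c\models p$ with $\models\ph(c,a_q)$ iff $q\in(q_1,q_2)\cup(q_3,q_4)\cup\dots\cup(q_{2k-1},q_{2k})$. Slice $\langle a_q\rangle$ into $k$ infinite increasing subsequences $I^0<I^1<\dots<I^{k-1}$ accumulating at $k$ pairwise distant rationals; by \emph{(b)} (applied to the $A$-indiscernible sequence $I^0+\dots+I^{k-1}$) they are mutually $A$-indiscernible. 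Given $\eta\in{}^k\om$, enclose the $\eta(\alpha)$-th element of each $I^\alpha$ in a short $\ph$-bump disjoint from all other elements of all the $I^\beta$ — a configuration of $k$ bumps, i.e.\ $2k$ cut points — to obtain $c_\eta\models p$ that realizes $\Gamma_\eta$ for the constant choice $\phi_\alpha:=\ph$. This is a randomness pattern of depth $k$, so $\rkdp(p)\ge k$ by Proposition \ref{prp:equivalence}.

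\emph{Where the difficulty lies.} The only non-bookkeeping step is $(iii)\Rightarrow(i)$: one must leverage a single formula carrying one long alternation into a $k$-dimensional array of \emph{independent} choices. The enabling ingredients are the compactness/homogeneity normalization that makes the $\ph$-bumps freely placeable, and fact \emph{(b)}, that consecutive pieces of one indiscernible sequence are automatically mutually indiscernible; together they let $\ph$ alone, with $k$ simultaneous bumps, encode a full depth-$k$ randomness pattern. (If one instead wishes to avoid the compactness step, an alternative is to prove $(ii)\Rightarrow(i)$ directly — split the sequence $I$ of $(ii)$ into $k$ consecutive infinite blocks, which are mutually indiscernible by \emph{(b)}, and note that the $k$-subsets of $I$ meeting each block once give a depth-$k$ randomness pattern with common formula $\ph$ — and then close the cycle with the easy $(iii)\Rightarrow(ii)$.)
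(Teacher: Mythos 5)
Your directions (i)$\Rightarrow$(ii) and (ii)$\Rightarrow$(iii) are fine and essentially identical to the paper's: take the column sequence of a randomness pattern and the disjunction of the $\phi_\alpha$'s, then interleave extra points to read off $2k$ alternations. The problem is the normalization at the start of your (iii)$\Rightarrow$(i). You claim that from $\alt^{p}_A(\ph)\ge 2k$ a compactness argument produces a $\setQ$-indexed $A$-indiscernible sequence on which, for every choice of cuts $q_1<\dots<q_{2k}$, some $c\models p$ satisfies $\ph(c,a_q)$ exactly for $q\in(q_1,q_2)\cup\dots\cup(q_{2k-1},q_{2k})$. This is false. Take the theory of an equivalence relation $E$ with infinitely many infinite classes, $\ph(x,y):=xEy$, $I$ an indiscernible sequence of pairwise inequivalent elements, and $p=\tp(c/\emptyset)$ for $c$ equivalent to one element of $I$: then $\alt^{p}(\ph)\ge 2$ (the pattern $-,+,-$ is witnessed), but no realization of $p$ can be $E$-equivalent to all elements of an infinite interval of $I$. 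The underlying issue is that indiscernibility only transports the \emph{one} trace pattern you actually have a witness for; ``$2k$ alternations'' gives a trace whose $2k+1$ constant blocks may all be singletons apart from the two end blocks, and the consistency of any pattern containing a longer block (your ``fattening'') simply does not follow. The same obstruction defeats the weaker configuration your bump construction really needs ($k$ isolated positives separated by arbitrarily many negatives, which would require fattening the middle \emph{negative} blocks), and also your proposed shortcut that ``(iii)$\Rightarrow$(ii) is easy'': clause (ii) demands that the trace be \emph{exactly} a $k$-element set, which is again a pattern for which you have no witness.

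The paper's proof of (iii)$\Rightarrow$(i) is organized precisely to sidestep this: it inducts on $k$, peeling off one sign change at a time, and the only elements it ever extracts from the possibly-degenerate middle blocks are the single ``exceptional'' points of each row of the randomness pattern, while all elements whose $\ph$-value must agree with the bulk are drawn from segments that are infinite because they are nontrivial convex pieces of a $\setQ$-ordered sequence. So your proof needs to be restructured along those lines (or the direction (iii)$\Rightarrow$(i) reproved some other way); the fattening claim itself cannot be rescued. Your parenthetical remark that (ii)$\Rightarrow$(i) follows directly by cutting $I$ into $k$ consecutive infinite blocks is correct and is a nice observation, but it does not close the cycle, since you still owe a genuine argument that (iii) implies (i) or (ii).
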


All the indiscernible sequences in the Proposition are presumed to be
infinite. Notice also that what (ii) is essentially saying is that 
every $k$-tuple in $I$ is $\ph(c,y)$-definable for some $c\models
p$.

\begin{proof}
    (i) $\implies$ (ii).
We will use the syntactic definition here. If $\rkdp(p)\ge k$,
then by Proposition \ref{prp:equivalence} there is a randomness
pattern of depth $k$ for $p(x)$; that is, there are $c\models p$,
formulas $\ph_1(x,y_1),\ldots, \ph_k(x,y_k)$ and $A$-mutually
indiscernible sequences $I_i = \seq{a^i_j \colon j<\om}$ in the
sort of $y_i$ (for $i=1, \ldots, k$), such that for every
$j_1,\ldots,j_k$ there is $c = c_{j_1,\ldots,j_k}\models p$ such
that $\ph_i(c,a^i_j)$ if and only if $j=j_i$.

Let $\ph(x,y_1\ldots y_k)=\bigvee_{i=1}^k\ph_i(x,y_i)$, and let
$I$ be an $A$-indiscernible sequence in the sort of $y_1\ldots
y_k$ defined as follows: $I = \lseq{a}{j}{\om}$, where
$a_j=a^1_j\ldots a^k_j$.

Choose arbitrary $k$ distinct indices $j_1, \ldots, j_k$. It is
easy to see that $c = c_{j_1,\ldots,j_k}$ is a realization of $p$
such that $\ph(c,a_j)$ if and only if $j\in \set{j_1,\ldots,j_k}$,
as required in (ii).

%

    (ii) $\implies$ (iii) is trivial.

(iii) $\implies (i)$. Let $I$ be an $A$-indiscernible sequence and
$c\models p$ such that $\ph(c,y)$ has $\ge 2k$ alternations in
$I$. We may assume that the number of alternation is finite, and
in fact equal to $2k$.
    We prove by induction on $k$ the following claim:

\smallskip
\noindent \emph{Claim.} If $I$ is an $A$-indiscernible sequence of
order type $\setQ$, and $\ph(c,y)$ has $2k$ alternations in $I$,
then there is a randomness pattern $(I^\al,\ph^\al)$ of depth $k$
for $\tp(c/A)$ with $I^\al$ being segments of $I$, and
$\ph^\al=\ph$ or $\ph^\al=\neg\ph$ for all $\al$.

\smallskip
The base case $k=0$ is trivial. So now given $I =
\inseq{a}{q}{\setQ}$ and $c$ as in the claim, since the order type
of $I$ is $\setQ$, there is an infinite initial segment on which
$\ph(c,y)$ is constant; assume for example that $\ph(c,y)$ holds,
and let $r \in \setR$ be the minimal cut such that for any $q> r$,
the truth value of $\ph(c,y)$ has changed signs twice in the
interval $(-\infty, q)$. Let $q> r$ be such that $\ph(c,a_q)$
holds, and there is no sign change between $r$ and $q$. Let $I^0 =
\seq{a_{<r}}^\frown\seq{a_{(r,q)}}$ and $\ph^0 = \ph$. Notice that
by indiscernibility and compactness, given any $a_i\in I^0$ the
type
\[
\neg \phi(x,a_i)\wedge \bigwedge_{j\in I^0\setminus \{a_i\}}
\phi(x,a_j)
\]
is consistent.

By the induction hypothesis, since the order type of
$I_{>q}=\seq{a_{>q}}$ is still $\setQ$, and since $I_{>q}$ has at
least $2k-2$ alternations of $\ph(c,y)$, we can find a randomness
pattern $\{ I^\al \} ,\{\ph^\al\}$ for $c$ over $A$, for
$\al=1,\ldots,k-1$ and $I^\al$ segments of $I_{>q}$. Now clearly
$\{I^\al\}_{\al=0}^{k}$ are mutually $A$-indiscernible (since they
are all segments of the same $A$-indiscernible sequence), and it
is easy to see that $\{I^\al\}_{\al\leq k} ,\{\ph^\al\}_{\al\leq
k}$ is a randomness pattern for $c$ over $A$ of depth $k$.

This finishes the proof of the claim, and the theorem.


\end{proof}


\begin{corollary}\label{cor:dprank_alt}
Assume that in a theory $T$ every type $p(x)$ over $\0$ in the sort of $x$
has dp-rank $\le k$. Then for every formula $\ph(x,y)$, the
alternation rank of $\ph(x,y)$ is bounded by $2k+1$.
\end{corollary}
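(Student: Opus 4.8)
The plan is to derive Corollary~\ref{cor:dprank_alt} directly from Proposition~\ref{thm:rank_counting}, using the equivalence of (i) and (iii) there. Suppose for contradiction that some formula $\ph(x,y)$ has alternation rank at least $2k+2$; that is, there exist an $\0$-indiscernible sequence $I$ in the sort of $y$ and an element $c$ (in the sort of $x$, possibly a tuple) such that the truth value of $\ph(c,y)$ alternates at least $2k+2$ times along $I$. Let $p(x) = \tp(c/\0)$. Then $\alt^{p(x)}_{\0}(\ph(x,y)) \ge 2k+2 = 2(k+1)$, so by the implication (iii)~$\Rightarrow$~(i) of Proposition~\ref{thm:rank_counting} (applied with $k+1$ in place of $k$) we get $\rkdp(p) \ge k+1$, contradicting the hypothesis that every type over $\0$ in the sort of $x$ has dp-rank $\le k$.

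The only small point requiring attention is a bookkeeping one: Proposition~\ref{thm:rank_counting} is stated for a type $p(x)$ and a natural number, and it speaks of the $p$-alternation rank of $\ph(x,y)$; so I first need to observe that the alternation rank of $\ph(x,y)$ in the theory $T$ (the maximal number of alternations of $\ph(c,y)$ along an indiscernible sequence, over all $c$) equals $\sup_{p} \alt^{p(x)}_{\0}(\ph(x,y))$, where $p$ ranges over complete types over $\0$ in the sort of $x$. This is immediate: any witnessing pair $(I,c)$ determines the complete type $p = \tp(c/\0)$, and conversely any witness for some fixed $p$ is in particular a witness for the unrestricted alternation rank. Hence if the alternation rank were $\ge 2k+2$, it would be witnessed by some specific $c$ with its type $p = \tp(c/\0)$, and the argument above applies.

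I expect there to be essentially no obstacle here — this is a routine corollary. The one thing to be careful about is the off-by-one: ``alternation rank bounded by $2k+1$'' means $\le 2k+1$, i.e.\ we must rule out $2k+2$ alternations, and Proposition~\ref{thm:rank_counting}(iii) with parameter $k+1$ requires exactly $2(k+1) = 2k+2$ alternations, matching perfectly. (Recall also the convention, stated in the paper, that singletons and finite tuples are not distinguished notationally, so there is no issue with $x$ being a tuple.) So the proof is just: assume $2k+2$ alternations for some $\ph$, extract the witness $c$, apply (iii)~$\Rightarrow$~(i) of Proposition~\ref{thm:rank_counting} to conclude $\rkdp(\tp(c/\0)) \ge k+1$, contradiction.
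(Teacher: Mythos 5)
Your proof is correct and is essentially identical to the paper's own argument: assume $2k+2$ alternations, take the witnessing $c$ and its type $p=\tp(c/\0)$, and apply the implication (iii)~$\Rightarrow$~(i) of Proposition~\ref{thm:rank_counting} with parameter $k+1$ to get $\rkdp(p)\ge k+1$, a contradiction. The bookkeeping remark about passing from the unrestricted alternation rank to the $p$-alternation rank is a harmless elaboration of a step the paper leaves implicit.
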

\begin{proof}
If $\alt(\ph(x,y))\ge 2k+2$, then there is an indiscernible
sequence in the sort of $y$ and some $c$ in the sort of $x$, that
witness this. By (iii) \then (i) in Proposition
\ref{thm:rank_counting}, $\rkdp(\tp(c/\0))\ge k+1$.
\end{proof}

\bigskip

We conclude this section with a a few examples that illustrate
things which can not be expected from dp-rank. We will leave some
of the technical details of the examples to the reader.

\subsection{Theory of dp-rank 2 with no dp-minimal types}


As we mentioned above, one might be drawn to think that all the
study of theories of finite dp-rank (theories where all dp-ranks
are finite) can be reduced to dp-minimal types. This, however is
not the case.

\begin{example}\label{example}
Consider the theory of an infinite set with two dense linear orders
$<_1$ and $<_2$, and take the model completion of it. This is, the
theory of a structure $M$ in $\mathcal L:=\{<_1, <_2\}$ such that
any finite formula consistent with $<_1$ and $<_2$ being dense
linear orders is realized in $M$.

Every one type has dp-rank 2, and there are no dp-rank 1 types:

First of all, the model completion exists and by definition it is
complete and has elimination of quantifiers in the language
$\mathcal L:=\{<_1, <_2\}$, so $\tp(a/A)$ can be understood by
formulas of the form $x<_1 a$,
$x<_2 a$,
and $x=a$ for suitable choices of $a\in A$.

It is not hard to show now that given any set $A$, any
1-variable type $p(x)\in \text{S}(A)$ has dp-rank 2. Given any such set and type,
it is enough to find mutually $A$-indiscernible sequences $\langle a_i
\rangle$ and $\langle b_j \rangle$ such that for every $k,\ell$ we have:
\begin{itemize}
\item
The set $$p(x)\cup \{x>_1a_i\}_{i\le k} \cup \{x<_1a_i\}_{i>k}\cup \{x>_2b_j\}_{j\le\ell} \cup \{x<_2b_j\}_{j>\ell}$$ is
consistent.
%
%
%
\end{itemize}

For this, it is enough to find for every $m<\omega$, sequences $\langle a_i
\rangle$ and $\langle b_j \rangle$ for $i,j<m$ such that for every $k,\ell<m$ we have:
\begin{itemize}
\item
The set $$p(x)\cup \{x>_1a_i\}_{i\le k} \cup \{x<_1a_i\}_{i>k}\cup \{x>_2b_i\}_{i\le\ell} \cup \{x<_2b_j\}_{j>\ell}$$ is
consistent.
\end{itemize}

Such $a_i$ and $b_j$ can be found by the definition of a model
companion.

This implies that every type in this theory has dp-rank at least 2, and in
particular that there are no dp-minimal types. On the other hand, it is easy to see that no one-type (over any set) has dp-rank bigger than 2.
\end{example}

\subsection{Type of dp-rank $\omega$ in a theory with types of finite weight}\label{sub:example}

People have asked whether or not strong dependence was
equivalent to every type having finite dp-rank, in the same way
that a stable theory is strongly dependent if and only if every
type has finite weight (some people asked this for Adler's notion of burden \cite{Ad2}, which is essentially the same question). Specifically, the question is whether or not it is
possible to have randomness patterns of arbitrarily large finite
depths but no randomness pattern of infinite depth for a given
complete type $p(x)$ (once one forgets the type and just asks whether there is a strongly dependent theory with arbitrarily deep randomness patterns, the question becomes much easier).

The following provides an example that the above is
possible even in stable theories.

Let
\[
S:=\{(m,n)\in \mathbb N\times \mathbb N\mid m<n\}
\]
and let $<_S$ (which we sometimes denote simply by $<$ for simplicity) be the partial order on $S$ defined by $(m_1,
n_1) <_S (m_2, n_2) $ if and only if $n_1< n_2$.

If $s=(m,n)$, we say that $s$ is \emph{of level} $n$, and write
$\lev(s)=n$. Note that $s\le t$ if and only if $s$ is of a smaller
level than $t$ (or $s=t$). So $s\neq t$ are incomparable if and
only if they are of the same level. Hence there are finitely many
$t$'s which are incomparable to a given $s$ -- in fact, the number
is exactly the $\lev(s)$.

So we have:

\begin{observation}\label{counting}
The following hold for $(S,\leq_S)$.
\begin{enumerate}
\item For any $s\in S$ there are finitely many $s'\in S$ which are
not greater than $s$.

\item For any $n\in \mathbb N$ there exists $s_1, \dots, s_n\in S$
such that $s_i$ is incomparable to $s_j$ for all $1\leq i\neq
j\leq n$.
\end{enumerate}
\end{observation}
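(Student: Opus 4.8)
The plan is to reduce both parts to the \emph{level function} $\lev\colon S\to\mathbb N$, $\lev(m,n)=n$, which — as noted in the discussion preceding the statement — completely governs the order: for $s\ne t$ in $S$ we have $s<_S t$ if and only if $\lev(s)<\lev(t)$, and $s,t$ are incomparable if and only if $\lev(s)=\lev(t)$. The only additional (trivial) combinatorial input is that each level is \emph{finite}: the set $\{s\in S:\lev(s)=k\}$ is exactly $\{(m,k):m<k\}$, which has at most $k$ elements.

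For (i), I would unwind ``$s'$ is not greater than $s$'' to $\neg(s<_S s')$, i.e.\ to $\lev(s')\le\lev(s)$. Then
\[
\{\,s'\in S : s'\text{ is not greater than }s\,\}=\bigcup_{k\le\lev(s)}\{(m,k):m<k\},
\]
a finite union of finite sets, hence finite. For (ii), I would simply exhibit an antichain inside a single level: fix $N>n$ and put $s_i:=(i,N)$ for $i=1,\ldots,n$. Each $s_i$ lies in $S$ since $i\le n<N$, the $s_i$ are pairwise distinct, and all have level $N$, so by the characterization of incomparability above they are pairwise incomparable.

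Neither part presents a real obstacle — everything is immediate once the order is rephrased via levels. The one point worth stating explicitly is the finiteness of each level, since that is precisely what upgrades the conclusion of (i) from ``$s'$ has level bounded by $\lev(s)$'' into genuine finiteness of the set of such $s'$.
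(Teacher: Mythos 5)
Your proof is correct and follows exactly the route the paper intends: the Observation is stated without a separate proof precisely because the preceding paragraph already reformulates $\leq_S$ via the level function $\lev(m,n)=n$ and notes that comparability is governed by levels, each of which is finite. Your write-up just makes explicit the two small points the paper leaves implicit (the count $|\{s':\lev(s')=k\}|=k$ and the choice of an antichain inside a single sufficiently high level).
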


Now consider the theory $T_\forall$ in the language $\mathcal
L:=\{E_s\}_{s\in S}$ which states that all $E_s$ are equivalence
relations and for any $s\leq_S t$ we have
\[
\forall x,y,\  x E_s y \Rightarrow x E_t y.
\]

Let $T$ be the model completion of $T_\forall$, so in particular it
is a complete theory with elimination of quantifiers.

\medskip

$T$ is axiomatized by the following axioms. In
order to make things more uniform, let us refer to equality as the
unique $s$ of level -1.

\begin{itemize}
   \item Every class of every $E_s$ for $s\in S$ is infinite.
   \item Every $E_s$ has infinitely many classes.
   \item Whenever $-1 \le n < m$, $\lev(s)=n$, $\lev(t_1)=\lev(t_2)=\ldots=\lev(t_k)=m$, $A_i$ is an equivalence class of $E_{t_i}$ (for $i=1,\ldots,k$), then $\cap_{i=1}^kA_i$ contains infinitely many classes of $E_s$.
\end{itemize}

Even without proving the existence of the model completion,
one can show directly, given the above axioms and using a standard back and
forth argument, that $T$ is a complete theory with elimination of
quantifiers.

\begin{claim}\label{omegastable}
$T$ is $\aleph_0$-stable.
\end{claim}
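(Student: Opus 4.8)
The plan is to count complete types over a model $M \models T$ directly, using quantifier elimination. Since $T$ has elimination of quantifiers in the language $\{E_s\}_{s\in S}$ (and equality, the level $-1$ relation), a complete $1$-type $p(x) \in \tS(M)$ is determined by the quantifier-free formulas it contains, i.e. by the data of which $E_s$-class of $M$ the element $x$ lies in (or whether it lies in no class realized in $M$), for each $s \in S \cup \{-1\}$. First I would reduce to the case $M$ countable, so that it suffices to show $|\tS(M)| \le \aleph_0$. For a fixed $x$, the relevant data is: for each $s$, either $x E_s a$ for some $a \in M$ (and then the $E_s$-class is pinned down, and this is consistent across levels by the axiom $x E_s y \Rightarrow x E_t y$ for $s \le_S t$), or $x$ is $E_s$-inequivalent to everything in $M$.

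The key structural observation is the downward-finiteness from Observation \ref{counting}(i), or rather its consequence for the order type of $S$: each element $s = (m,n)$ has only finitely many levels $\le n$ below it, and within each level there are infinitely many incomparable $s$'s, but \emph{crucially} the levels are linearly ordered by $\mathbb{N}$. So the second step is to analyze the ``shape'' of a type: let $N(p)$ be the supremum of levels $n$ such that $x$ is $E_s$-equivalent to some element of $M$ for some $s$ of level $n$. By the implication axioms, if $x E_s y$ with $\lev(s) = n$, then $x E_t y$ for every $t \ge_S s$, i.e. every $t$ of level $> n$. Hence the set of levels at which $x$ is ``attached'' to $M$ is upward-closed in $\mathbb{N}$: it is either all of $\{-1,0,1,\dots\}$, or a cofinite set $\{n_0, n_0+1, \dots\}$ together with possibly some lower attachments, or — the generic case — $x$ is attached to $M$ at cofinitely many levels but the class it sits in gets "more generic" as the level drops. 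Wait: actually the correct reading is that once $x$ is in a named class at level $n$, it is automatically in named classes at all higher levels; the freedom is only at lower levels. So a type is described by choosing a level $n_0 \in \mathbb{N} \cup \{-1, \infty\}$ (the lowest level at which $x$ is still attached to $M$), and for each level $-1 \le n \le$ (finitely many below, if attached) the specific class.

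Thus the third step is the actual count. For a type attached at level $n_0 \ge -1$: there are only finitely many levels $-1, 0, \dots$ up to any bound, but infinitely many classes at each level; however the choices at different levels are \emph{nested} (choosing an $E_s$-class with $\lev(s) = n$ forces it to lie inside the chosen $E_t$-class for $\lev(t) > n$, by the axioms), so the choice is really a choice of one descending chain of classes, indexed by the finitely many levels $\{-1, 0, \dots, n_1\}$ for each starting point — no wait, attachment is upward closed, so if attached at $n_0$ it is attached at \emph{all} $n \ge n_0$, of which there are infinitely many, but again these are determined by a single nested choice: pick the $E_s$-class at the lowest attached level $n_0$ for \emph{one} representative $s$ of each level, and everything above is forced; the classes at level $n_0$ number $\aleph_0$ (countably many classes, $M$ countable), and then for each level $n \geq n_0$ and each $s$ of level $n$ the class is forced — but there are only countably many $s \in S$ total, and at each we just record one class, so the whole type is coded by a function from a countable set to $M$, subject to consistency. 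The point is that consistency (the implication axioms plus QE) forces this function to be essentially determined by \emph{countably much} independent data, and a careful bookkeeping gives $|\tS(M)| = \aleph_0$. I would organize this as: (a) attachment level $n_0 \in \{-1\} \cup \mathbb{N} \cup \{\infty\}$ — countably many choices; (b) given $n_0 < \infty$, for each level $n \ge n_0$ and each of the infinitely many $s$ at that level, a class of $E_s$ containing $x$ — but by nestedness and QE this is a consistent coherent choice, and I claim there are only countably many such coherent choices because... here is where care is needed.

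\textbf{The main obstacle.} The delicate point is (b): a priori, at a fixed level $n$ there are infinitely many pairwise incomparable relations $E_s$, and for each one an independent choice of class, which naively gives $\aleph_0^{\aleph_0} = 2^{\aleph_0}$ possibilities at a single level. The resolution must come from the model-completion axioms: any finite collection of class-choices that is consistent with the axioms is realized, and QE says a type is the union of its finite restrictions — so the type is determined by which finite patterns it contains, and one must check that the finite patterns form a countable set, which will follow from the fact that $T_\forall$ is a universal theory in a \emph{countable} language so there are only countably many quantifier-free formulas, hence only countably many quantifier-free $1$-types over the empty set, and over a countable $M$ the QE reduces an $M$-type to a quantifier-free type in countably many extra parameters. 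So in fact the clean argument is: QE $\Rightarrow$ every $1$-type over countable $M$ is determined by a quantifier-free type over $M$; a quantifier-free formula in $x$ over $M$ is a boolean combination of formulas $x E_s a$ and $x = a$ for $a \in M$, $s \in S$; there are countably many such atomic formulas, hence countably many quantifier-free types; so $|\tS(M)| \le \aleph_0$, i.e. $T$ is $\aleph_0$-stable. The only thing to verify with genuine care is that ``$\le \aleph_0$ atomic formulas $\Rightarrow \le \aleph_0$ complete quantifier-free types'' — this is false in general (e.g. the random graph), so one genuinely needs the implication/nesting structure to cut the count down, and spelling out that structure (a quantifier-free type is determined by the single descending sequence of classes together with the cut-off level) is the real content of the proof.
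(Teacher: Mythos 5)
Your overall strategy is the same as the paper's: use quantifier elimination to reduce to counting quantifier-free $1$-types over a countable model, and exploit the nesting axioms (attachment to a named class is upward-closed in the levels) so that a non-generic type is determined by its least attachment level plus a bounded amount of further data. However, two things go wrong in the execution. First, you assert more than once that a fixed level of $S$ contains infinitely many pairwise incomparable relations; this is false. Level $n$ consists of the pairs $(m,n)$ with $m<n$, so it has exactly $n$ elements (Observation \ref{counting} provides arbitrarily large \emph{finite} antichains, not an infinite level). Consequently the ``main obstacle'' you isolate --- $\aleph_0^{\aleph_0}$ many independent class-choices at a single level --- does not exist; the genuine source of a potential $2^{\aleph_0}$ count is the infinitely many \emph{levels}, and it is eliminated by a different mechanism than the one you gesture at.

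Second, the count is never actually completed: you end by saying that spelling out the structure of a quantifier-free type ``is the real content of the proof,'' and the one structural description you do offer --- that a type is determined by a single descending sequence of classes together with a cut-off level --- is not accurate. If $p$ is attached to $M$ at least level $n_0$ via some $s$ of level $n_0$ and some $a\in M$ with $xE_sa\in p$, then the $E_t$-class is forced for every $t>_S s$, but every $E_t$ with $s\not<_S t$ (that is, every $t$ of level $\le n_0$, including the other, pairwise incomparable, relations of level $n_0$ itself) must be settled separately; this is a finite collection of independent choices, not a single chain. What saves the count is precisely Observation \ref{counting}(i): the set $\{t\in S: s\not<_S t\}$ is \emph{finite}, and each of these finitely many relations admits only countably many settlements over a countable $M$ (an $E_t$-class of some element of $M$, or no named class), giving countably many types of each attachment level and hence countably many types in all. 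This finiteness is the step your argument needs and never invokes; it is exactly the pivot of the paper's proof.
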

\begin{proof}

For every (non-generic) type $p$ over a countable model $M$, let
$\lev(p)$ be the least $n$, such that there exists $a\in M$ and
$s$ of level $n$, such that the formula $xE_s a$ is in $p$. Since
this determines which $E_t$ classes $p$ ``chooses'' for $s<t$,
there are only finitely many equivalence relations left to
``settle'' -- specifically, all those $E_t$ for which $s\not<t$
(see Observation \ref{counting} above). It follows (by quantifier
elimination) that there are countably many types of each    given
level $n$. Since there are countably many levels, we are done.
\end{proof}

We can now show that $T$ is an example of a theory with non finite
dp-rank and no randomness pattern of depth $\omega$ for the
unique type $p(x)$ over the empty set.

\begin{theorem}\label{thm:example}
For any element $c$ and any natural number $n$, one can find
$\mathcal I$ such that $\mathcal I$ is a set of $n$ mutually
indiscernible sequences (over $\emptyset$) none of which is
indiscernible over $c$. However, no such example can be found with
$\mathcal I$ infinite.
\end{theorem}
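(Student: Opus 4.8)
The plan is to prove the two halves of the statement separately, using the combinatorics of the partial order $(S,\leq_S)$ recorded in Observation \ref{counting}. For the positive part, fix $c$ and $n<\omega$. By Observation \ref{counting}(ii) choose $s_1,\dots,s_n\in S$ pairwise incomparable (so all of the same level, say level $N$). For each $j\leq n$ I would build the sequence $I^j$ to consist of representatives of distinct $E_{s_j}$-classes, chosen so that within $I^j$ the ``finitely many'' coarser relations $E_t$ (those with $s_j<_S t$) are all constant — i.e. all elements of $I^j$ lie in a single $E_t$-class for every $t>s_j$ — while the $E_{s_j}$-classes are pairwise distinct, and the remaining relations (those $E_t$ with $s_j\not\leq_S t$, $t\neq s_j$) never relate any element of $I^j$ to any element of $I^{j'}$ for $j\neq j'$, nor any two distinct elements of $I^j$. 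The axioms for $T$ (every $E_s$-class splits into infinitely many classes of each finer relation, and the intersection pattern is as generic as possible) guarantee by a routine back-and-forth / compactness argument that such a configuration exists and that each $I^j$ is an infinite $\emptyset$-indiscernible sequence, and that the $I^j$ are mutually indiscernible over $\emptyset$: the quantifier elimination means the type of a tuple is determined by the pattern of $E_s$-relations, and by construction that pattern is invariant under permuting within any single $I^j$ while fixing the others. Finally, to make $I^j$ \emph{not} indiscernible over $c$: since $c$ lies in some $E_{s_j}$-class, I can arrange that exactly one element $a^j_0$ of $I^j$ is $E_{s_j}$-equivalent to $c$ (by the genericity axioms, the $E_{s_j}$-class of $c$ meets the relevant finer classes), so the formula $x E_{s_j} a^j_0$ holds of $c$ only at that one element and fails elsewhere, breaking indiscernibility of $I^j$ over $c$. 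This is exactly a randomness pattern of depth $n$ (with $\phi_j(x,y) = x E_{s_j} y$), so $\rkdp(p)\geq n$ for every $n$.

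For the negative part — no such $\mathcal I$ with $\mathcal I$ infinite — suppose toward a contradiction that $\mathcal I=\{I^j : j\in\omega\}$ is an infinite family of mutually $\emptyset$-indiscernible sequences, none indiscernible over $c$. For each $j$ pick a formula $\phi_j(x,\bar y)$ witnessing that $I^j$ is not $c$-indiscernible; by quantifier elimination and the fact that in this language every atomic formula on a tuple is a Boolean combination of instances of $x E_s y$ with both variables among finitely many coordinates, one reduces to the case that the witness is a single relation $E_{s_j}$ linking $c$ to elements of $I^j$ (or linking two fixed coordinate-positions within $I^j$, which an indiscernible sequence cannot do unless it is constant in that relation — so one is forced into the ``$c$ enters'' case). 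So for each $j$ there is $s_j\in S$ and elements of $I^j$ such that $c E_{s_j} a$ holds for some but not all $a$ appearing in $I^j$. In particular $c E_{s_j} a$ for at least one element of $I^j$, for infinitely many distinct $j$. Now invoke the key finiteness: fix any one element $a_\star$ appearing in $I^0$ (say), with $c E_{s_0} a_\star$; the set $\{s\in S : c E_s (\text{something})\}$ determined by which relations $c$ shares nontrivially across all the $I^j$ contains infinitely many $s_j$. But the $s_j$ for which $c$ genuinely ``distinguishes'' inside $I^j$ must be pairwise incomparable in $\leq_S$: if $s_j \leq_S s_{j'}$ then $E_{s_j}$ refines $E_{s_{j'}}$, so knowing $c E_{s_j} a^j_i$ varies does not force $E_{s_{j'}}$ to vary on $I^{j'}$ — here I need to argue more carefully that mutual indiscernibility already pins down all the coarse-relation data of $c$ relative to all but finitely many of the sequences. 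The clean way: by Observation \ref{counting}(i), the set of $s'\in S$ that are \emph{not} $\geq_S s_0$ is finite; if $c E_{s_0} a_\star$ then for every $t\geq_S s_0$, $c E_t a_\star$ as well, so the only relations in which $c$ can still be ``undecided'' relative to the class of $a_\star$ are those finitely many $s'\not\geq_S s_0$. Hence at most finitely many of the sequences $I^j$ can have their non-$c$-indiscernibility witnessed by a relation along which $c$ is tied to an element of $I^0$'s region; running this argument across the family and using that the sequences are mutually indiscernible, one concludes only finitely many $j$ can contribute, contradicting infiniteness.

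\textbf{Main obstacle.} The delicate point is the negative direction: turning ``$I^j$ not indiscernible over $c$'' into a clean statement that $c$ is $E_{s_j}$-related to \emph{exactly one} (or finitely many) elements of $I^j$ for an incomparable family of $s_j$, and then deploying Observation \ref{counting}(i) to bound how many such $s_j$ can coexist given that all the $I^j$ are mutually indiscernible. One must be careful that a single witnessing formula $\phi_j$ may involve several coordinates and several relations $E_s$ simultaneously, and that mutual indiscernibility over $\emptyset$ constrains the joint type of $c$ with the whole array — so the right framing is probably: extract from each $I^j$ a single relation $E_{s_j}$ and a single ``special'' element, show the $s_j$ are forced to be pairwise incomparable (all at distinct levels would contradict Observation \ref{counting}, and comparable ones would make the coarser witness redundant), and then Observation \ref{counting}(i) gives the finiteness. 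I expect the existence/genericity verifications in the positive part to be entirely routine given quantifier elimination, so essentially all the content is in this bounding argument.
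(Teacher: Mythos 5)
Your positive half is essentially the paper's own argument (pairwise incomparable $s_1,\dots,s_n$ at a common level, each $I^j$ consisting of pairwise $E_{s_j}$-inequivalent elements with $c$ tied to exactly one of them), and that part is fine. For the negative half, however, you have missed the route the paper actually takes, which is a one-liner: Claim \ref{omegastable} (proved immediately before the theorem) shows $T$ is $\aleph_0$-stable, hence superstable, hence strongly dependent; and by Proposition \ref{prp:equivalence} an infinite family of mutually indiscernible sequences, none indiscernible over $c$, is exactly a randomness pattern of infinite depth, which strong dependence forbids. The paper only mentions the direct quantifier-elimination argument you attempt as an unwritten alternative.

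That direct argument can be completed, but as written yours has a genuine gap --- one you flag yourself (``here I need to argue more carefully''), and the final sentence asserts the conclusion rather than deriving it. The missing content is the following two steps. First, since each $I^j$ is indiscernible over the union of the others, the only atoms whose truth value can vary along $I^j$ once $c$ is adjoined are $c\,E_s\,y$ (and $c=y$, which one treats as the level $-1$ relation); and since $I^j$ is $\emptyset$-indiscernible, its elements are either pairwise $E_s$-equivalent (so $c$ relates to all or to none, giving no alternation) or pairwise $E_s$-inequivalent. Hence non-indiscernibility of $I^j$ over $c$ produces an $s_j$ with the elements of $I^j$ pairwise $E_{s_j}$-inequivalent and $c$ $E_{s_j}$-equivalent to \emph{exactly one} element $b_j$. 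Second --- and this is the step your sketch never carries out --- if $s_j\leq_S s_{j'}$ for $j\neq j'$ (including $s_j=s_{j'}$), then $c\,E_{s_{j'}}\,b_j$ and $c\,E_{s_{j'}}\,b_{j'}$, so $b_j\,E_{s_{j'}}\,b_{j'}$; but $I^{j'}$ is indiscernible over $b_j$, so either all or none of its elements are $E_{s_{j'}}$-equivalent to $b_j$, and ``all'' is impossible since they are pairwise $E_{s_{j'}}$-inequivalent --- contradiction. Therefore the $s_j$ are pairwise distinct and pairwise incomparable, i.e.\ they form an antichain in $S$; antichains sit inside a single level and are finite (Observation \ref{counting}(i) also yields this once one $s_0$ is fixed, but the relevant fact is finiteness of antichains, not the ``undecidedness'' heuristic you appeal to). Without the second step the whole bound collapses, so as submitted the negative direction is not proved.
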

\begin{proof}
Let $n$ be a natural number, and let $s_1, \ldots, s_n$ be
incomparable elements of $S$ of level $n$ (see Observation
\ref{counting}). By the axioms of $T$ we can find $I_1$, \ldots,
$I_n$ mutually indiscernible sequences of singletons with the
following properties:

    \begin{itemize}
        \item Elements of $I_i$ are $E_{s_j}$-equivalent if and only if $j\neq
        i$,
        \item $c$ is $E_{s_i}$-equivalent to the first element of $I_i$ for all $i$.
    \end{itemize}

    Clearly, none of the $I_i$'s is indiscernible over $c$, as required.

On the other hand, by Claim \ref{omegastable}, $T$ is superstable,
hence strongly dependent (this is very easy to see directly, but
see \cite{Sh863}). So there is no infinite randomness pattern for
any type in any model of $T$. (Alternatively, using quantifier
elimination, one can easily give a direct proof that there is no
infinite randomness pattern.)
\end{proof}

\begin{remark}\label{rmk:example}
    In a similar fashion, one constructs an example of a theory which is not superstable, but still strongly dependent, with a type of dp-rank $\omega$. This is done by switching the ``nesting order'' of the equivalence relation in the example above; that is, we demand in the universal theory that for $s\leq_S t$ we have
    \[
    \forall x,y,\  x E_t y \Rightarrow x E_s y.
    \]

    Here one has to give a direct argument as to why there is no infinite randomness pattern; but this quite straightforward, using quantifier elimination, and we leave it to the reader.
\end{remark}

Recall that in a strongly stable theory (strongly dependent and
stable) every type has finite weight (\cite{Ad2,Us}). However, in
both examples discussed above, the unique (non-algebraic) type
over $\0$ has dp-rank $\omega$. So we have two examples of
theories where every type has finite weight but there are
\emph{no} non algebraic types of finite dp-rank over $\0$. In
fact, the generic type over \emph{any} set will also have dp-rank
$\omega$ by a similar argument. In the example discussed in Remark
\ref{rmk:example} the situation is even more extreme, because
almost \emph{all} types have infinite dp-rank (besides the
algebraic and the strongly minimal ones).

\section{Extending indiscernible sequences} 
\label{sec:extending_indiscernible_sequences}

Since the main definition of the paper involves mutually
indiscernible sequences, it would be nice to have certain tools
for handling such ``independent arrays''. Specifically, we would
like to have a ``consistent'' way of extending indiscernible
sequences. Our technique will make use of Shelah's notion
of average types with respect to ultrafilters, which provides a
way of constructing co-heir extensions. In spite of its usefulness,
this notion does not yet seem to be widespread in the model
theoretic community.

\medskip

First, we recall the following easy observation.

\begin{fact}\label{unique complete}
Given a set $A$ and an infinite indiscernible sequence $\mathcal I
= \langle a_i \colon i \in I \rangle$ over $A$, there exists a
unique complete type $p$ over $AI$ such  that if $a\models p$,
then $I^\frown\langle a \rangle$ is indiscernible over $A$.
\end{fact}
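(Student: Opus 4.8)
The plan is to build the type $p$ as a union of $\mathrm{tp}$-data read off from $\mathcal I$ together with the "indiscernibility over $A$" requirement, and then verify consistency and uniqueness separately.

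First I would observe that the requirement "$I^\frown\langle a\rangle$ is indiscernible over $A$" forces, for each finite increasing tuple $i_1<\dots<i_n$ from $I$, that $\mathrm{tp}(a_{i_1},\dots,a_{i_{n-1}},a/A)=\mathrm{tp}(a_{j_1},\dots,a_{j_n}/A)$ for any (equivalently, some) increasing $j_1<\dots<j_n$ in $I$. Since $\mathcal I$ is infinite, such $j_1<\dots<j_n$ exist, and by $A$-indiscernibility of $\mathcal I$ the type on the right does not depend on the choice. So define $p(x)$ over $AI$ to be the set of all formulas $\varphi(x,a_{i_1},\dots,a_{i_{n-1}},\bar c)$ with $\bar c$ from $A$, such that $\models\varphi(a_{j_n},a_{j_1},\dots,a_{j_{n-1}},\bar c)$ for increasing $j_1<\dots<j_n$ in $I$. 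This $p$ is clearly complete (for each formula over $AI$, either it or its negation is put in, depending on the truth value at the "shifted" tuple) and is exactly what the indiscernibility demand dictates, which will give uniqueness once I know it is consistent.

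For consistency I would use compactness: a finite subset of $p$ mentions only finitely many elements $a_{i_1},\dots,a_{i_m}$ of $I$ (say $i_1<\dots<i_m$) and finitely many parameters from $A$; I must find $a$ with $\mathrm{tp}(a_{i_1},\dots,a_{i_m},a/A)$ equal to the type of an increasing $(m+1)$-tuple from $I$. Pick any $i_{m+1}\in I$ with $i_{m+1}>i_m$ (possible since $I$ is infinite, hence has no last element — or, if necessary, first extend $\mathcal I$ slightly by compactness so that it does); then $a:=a_{i_{m+1}}$ works, because by $A$-indiscernibility of $\mathcal I$ the tuple $(a_{i_1},\dots,a_{i_m},a_{i_{m+1}})$ has the right type. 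Hence every finite fragment of $p$ is realized, so $p$ is consistent.

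I do not expect a serious obstacle here; the only mild point of care is the case where $I$ has a maximal element, where one should either note that an infinite linear order used to index an indiscernible sequence can be taken without loss of generality to have no last element, or extend $\mathcal I$ by one step via compactness before reading off $p$ — but this is routine. The content is simply that $A$-indiscernibility of an infinite sequence propagates to a well-defined "limit" type, and this is the easy observation the fact asserts.
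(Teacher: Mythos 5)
The paper states this Fact as a recalled ``easy observation'' and gives no proof at all, so there is no in-paper argument to compare against; your proof is the standard one and is essentially correct. The definition of $p$ (put $\varphi(x,a_{i_1},\dots,a_{i_{n-1}},\bar c)$ into $p$ exactly when $\varphi$ holds of an increasing $n$-tuple from $\mathcal I$ together with $\bar c$, well-defined by $A$-indiscernibility) and the uniqueness argument are exactly right. The one point that deserves more care than you give it is consistency when the index order $I$ has a last element: there you cannot take $a$ to be a further element of $\mathcal I$, the parenthetical ``$I$ is infinite, hence has no last element'' is false for general linear orders, and ``first extend $\mathcal I$ slightly by compactness'' is mildly circular, since extending an indiscernible sequence by one element is precisely the existence half of the Fact being proved. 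Two clean repairs: (a) letting $\psi(a_{i_1},\dots,a_{i_m},x,\bar c)$ be the conjunction of a finite fragment, choose increasing $j_1<\dots<j_{m+1}$ in $I$ (possible since $\mathcal I$ is infinite) and observe that $\exists x\,\psi(a_{j_1},\dots,a_{j_m},x,\bar c)$ holds, witnessed by $a_{j_{m+1}}$; by $A$-indiscernibility the same existential statement holds with $(a_{i_1},\dots,a_{i_m})$ in place of $(a_{j_1},\dots,a_{j_m})$, which gives the required realization. Or (b) take an automorphism of the monster over $A$ carrying $(a_{j_1},\dots,a_{j_m})$ to $(a_{i_1},\dots,a_{i_m})$ and let $a$ be the image of $a_{j_{m+1}}$. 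With either repair the argument is complete.
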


This fact provides a natural and standard way to extend
indiscernible sequences. However, it will not always be good
enough for us, since, as mentioned in the introduction, we will need to
extend sequences preserving indiscernibility over different subsets. 

This is why the definition of average types from Shelah becomes
useful:

\begin{definition}
Let $I=\langle a_i\rangle$ be an indiscernible sequence and
$\mathcal U$ an ultrafilter on the index set of $I$. Given any set
$B$ we will define $Avg_{\mathcal U}(I, B)$, the \emph{average type
of $I$ over $B$ given by $\mathcal U$}, as the unique complete type
$p(x)$ such that for every formula $\phi(x,y)$ and $b\in B$ we have
\[
\phi(x,b)\in p(x) \Leftrightarrow \{i \mid \phi(x,a_i)\}\in
\mathcal U.
\]
\end{definition}

This definition will allow us to prove the following. The proof
requires some knowledge of ultrafilters.

\begin{proposition}\label{unique complete 2}
Let $I$ be an indiscernible sequence indexed by an order with no
last element, and let $B$ be any set. Then, for any indexing set
$\lambda$, there is an indiscernible sequence $I^*$ indexed by
$\lambda$ such that $I^\frown I^*$ is indiscernible over $A$
whenever $A\subseteq B$ is a set such that $I$ was already
indiscernible over $A$.
\end{proposition}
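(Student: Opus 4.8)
The plan is to iterate the average-type construction from the preceding definition, using a single fixed ultrafilter, so that the extension is ``coherent'' over all the relevant subsets $A$ simultaneously. First I would fix an ultrafilter $\mathcal U$ on the index set of $I$ which is \emph{nonprincipal and concentrates on final segments}; since the index order has no last element, such an ultrafilter exists (take any ultrafilter containing the filter generated by the final segments). The point of this choice is the standard fact that if $I$ is indiscernible over $A$, then $\Av_{\mathcal U}(I,B)\restriction A$ realized by a new element $a$ yields $I^\frown\langle a\rangle$ indiscernible over $A$: indeed for a formula $\ph(x,\bar a_{i_1},\dots,\bar a_{i_k})$ with $i_1<\dots<i_k$ from $I$ and $x$ in the last position, membership in the average type is governed by whether $\{\,i : \ph(\bar a_i,\bar a_{i_1},\dots,\bar a_{i_k})\,\}\in\mathcal U$, and since $\mathcal U$ concentrates on final segments this set is in $\mathcal U$ iff the formula holds for all sufficiently large $i$, which by $A$-indiscernibility of $I$ is equivalent to the corresponding statement inside $I$. (This is exactly Fact~\ref{unique complete} recovered via $\mathcal U$, but now with a \emph{uniform} witness independent of $A$.)

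Next I would build $I^*=\langle a^*_\xi : \xi<\lambda\rangle$ by transfinite recursion: having chosen $\langle a^*_\eta : \eta<\xi\rangle$, let $a^*_\xi$ realize $\Av_{\mathcal U}(I,\,B\cup\{a^*_\eta:\eta<\xi\})$. The key claim to verify by induction on $\xi$ is: for every $A\subseteq B$ over which $I$ was indiscernible, the sequence $I^\frown\langle a^*_\eta:\eta<\xi\rangle$ is indiscernible over $A$. The successor step is where the real work is. One must check that $I^\frown\langle a^*_\eta:\eta\le\xi\rangle$ is $A$-indiscernible, i.e. that the type of any increasing tuple from it (with respect to the concatenated order) over $A$ depends only on the length. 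By the inductive hypothesis the initial part $I^\frown\langle a^*_\eta:\eta<\xi\rangle$ is $A$-indiscernible; adding $a^*_\xi$ at the top, its type over $A\cup\{$those elements$\}$ is $\Av_{\mathcal U}(I,-)$-controlled, so truth of $\ph(a^*_\xi, \bar c)$ for $\bar c$ an increasing tuple from the earlier part is decided by $\{i:\ph(\bar a_i,\bar c)\}\in\mathcal U$. Now I would argue: since $\mathcal U$ concentrates on final segments, by $A$-indiscernibility of $I^\frown\langle a^*_\eta:\eta<\xi\rangle$ this truth value depends only on the order type of $\bar c$ together with ``$a^*_\xi$ at the end'', not on which particular increasing tuple $\bar c$ we picked — this is the standard argument that an average type ``continues'' an indiscernible sequence, and it relativizes to $A$ because every element of $I$ and the claimed indiscernibility are over $A$. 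The limit step is immediate since indiscernibility is a property of finite tuples.

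The main obstacle I anticipate is the bookkeeping in the successor step: one is extending a sequence that is indiscernible over \emph{many different} sets $A$ at once, and one must be careful that the \emph{same} element $a^*_\xi$ works for all of them — this is exactly why a single ultrafilter $\mathcal U$ must be fixed at the outset rather than chosen set-by-set. A secondary subtlety is that $\mathcal U$ lives on the index set of $I$ only, while at stage $\xi$ we are taking the average over $B$ together with previously added elements $a^*_\eta$; so formulas $\ph(x,\bar a^*_{\eta_1},\dots,\bar a^*_{\eta_m},\bar b)$ mixing old and new parameters must be handled, and one shows inductively (again using that $\mathcal U$ concentrates on final segments) that such formulas are still correctly decided, because each $a^*_\eta$ was itself built to ``look like a far-out element of $I$'' over everything before it. Once the induction is set up cleanly, each individual verification is routine; I would present the successor step in full and remark that the limit step and the existence of $\mathcal U$ are standard.
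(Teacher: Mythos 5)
There is a genuine error in the successor step: the new element must be inserted immediately after $I$ and \emph{before} the previously constructed elements, not ``at the top.'' A realization $a^*_\xi$ of $\mathrm{Avg}_{\mathcal U}(I,\,B\cup\{a^*_\eta:\eta<\xi\})$ behaves, over the earlier parameters, like an element of $I$ lying beyond all of $I$ but \emph{below} every $a^*_\eta$ with $\eta<\xi$ (each of which was itself built to sit just beyond $I$ relative to everything chosen before it). Concretely, for $\bar c$ an increasing tuple from $I^\frown\langle a^*_\eta:\eta<\xi\rangle$, the truth value of $\ph(a^*_\xi,\bar c)$ equals that of $\ph(a_i,\bar c)$ for cofinally many $i$, which by the inductive indiscernibility is the truth value obtained by inserting $a_i$ \emph{between} the $I$-part and the $a^*$-part of $\bar c$ --- not at the end. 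Your claim that this value ``depends only on the order type of $\bar c$ together with $a^*_\xi$ at the end'' therefore fails; e.g.\ in DLO with $I$ an increasing sequence and $\mathcal U$ cofinal, $a^*_0$ lies above all of $I$ and $a^*_1$ lies strictly between $I$ and $a^*_0$, so $I^\frown\langle a^*_0,a^*_1\rangle$ is not indiscernible, whereas $I^\frown\langle a^*_1,a^*_0\rangle$ is. This is exactly why the paper writes the finite extension in reverse order of construction, $I^\frown\langle a_{n+1},a_n,\dots,a_0\rangle$.

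Apart from this, your setup matches the paper's: a single ultrafilter concentrating on final segments, chosen once so that one realization works uniformly for every $A\subseteq B$ over which $I$ is indiscernible. But note that fixing the ordering also forces a change in how you reach an arbitrary index set $\lambda$: building by transfinite recursion and prepending yields a sequence of order type $\lambda^*$ (the reverse of $\lambda$), not $\lambda$. The paper sidesteps this by proving the statement for finite $I^*$ only and then invoking compactness to realize any $\lambda$; you should do the same (or add a re-indexing argument), since the limit stages of your recursion do not by themselves produce the required order type.
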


\begin{proof}
We will prove the result for any finite $I^*$. The general case
follows by compactness.

If we take $\mathcal U$ to be an ultrafilter over $I$ such that
every set in $\mathcal U$ is unbounded in $I$, it follows from the
definition that for any $A$ such that $I$ is indiscernible over
$A$ and any $a_0\models Avg_{\mathcal U}(I, AI)$ we have $I^\frown
\langle a \rangle$ is indiscernible over $A$. Since the
definitions imply that $Avg_{\mathcal U}(I, BI)$ extends
$Avg_{\mathcal U}(I, AI)$ for any such $A\subset B$, any
realization $a_0$ of the type $p(x):= Avg_{\mathcal U}(I, BI)$
will be such that $I^\frown \langle{a_0}\rangle$ is indiscernible
over $A$. Inductively, it follows from the properties of average
types that if we let
\[
a_{n+1}\models Avg_{\mathcal U}(I, BI^\frown\langle a_n, a_{n-1},
\dots, a_0\rangle)\] then $I^\frown\langle a_{n+1}, a_n, a_{n-1},
\dots, a_0\rangle$ is an indiscernible sequence over $A$. This
construction will therefore give us an indsicernible sequence
$I'$, indexed by $n$, such that $I^\frown I^*$ is
indiscernible over any $A$ over which $I$ was indiscernible (for an arbitrarily large $n$); by
compactness, given any indexing set $\lambda$, we can find an
indiscernible sequence $I^*$ indexed by $\lambda$ such that
$I^\frown I^*$ satisfies the conclusion of the proposition.
\end{proof}

Proposition \ref{unique complete 2} is the only instance where we
will use average types. If unwilling to think about ultrafilters,
the reader can just assume the existence of a way to extend
indiscernible sequences given by Proposition \ref{unique complete
2}.

\begin{corollary}\label{average}
Let $I$ and $J$ be infinite mutually indiscernible sequences over
$A$ and let $B\supset A$.
Let $I^*$ be as in Proposition \ref{unique complete 2}. 

Then $I^\frown I^*$ and $J$ are mutually indiscernible over $A$.
\end{corollary}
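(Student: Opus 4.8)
The plan is to reduce the statement about mutual indiscernibility to two separate indiscernibility assertions and then invoke Proposition \ref{unique complete 2} appropriately. Recall that $I^\frown I^*$ and $J$ being mutually indiscernible over $A$ means two things: first, that $J$ is indiscernible over $A \cup (I^\frown I^*)$, and second, that $I^\frown I^*$ is indiscernible over $A \cup J$. I would establish these two points in turn, and the crucial observation is that the set over which we want to preserve indiscernibility of $I$ in the construction, namely $B$, can be chosen to include $J$.

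First I would note that, since $I$ and $J$ are mutually indiscernible over $A$, in particular $I$ is indiscernible over $A \cup J$. Now apply Proposition \ref{unique complete 2} with the ``large'' parameter set taken to be (a superset of) $B \cup J$ -- more precisely, we run the construction of $I^*$ so that $I^\frown I^*$ is indiscernible over every $A' \subseteq B \cup J$ over which $I$ was already indiscernible. Since $A \cup J$ is such a set, we get that $I^\frown I^*$ is indiscernible over $A \cup J$, which is the second of the two required conditions. (If one wants the $I^*$ to be literally the one named in the corollary's hypothesis, one should read Proposition \ref{unique complete 2} as producing a sequence that works for \emph{all} relevant subsets of the monster simultaneously -- which it does, since the average type $\Avg_{\mathcal U}(I, -)$ is coherent as the parameter set grows; alternatively just take $B$ in Proposition \ref{unique complete 2} to be everything in sight.)

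For the first condition, that $J$ is indiscernible over $A \cup (I^\frown I^*)$: here I would use the characterization of the average type as a coheir-type. Each $a_n$ in the construction realizes an average type $\Avg_{\mathcal U}(I, \cdot)$ over a set containing $J$, and such average types are finitely satisfiable in $I$; since $J$ is already indiscernible over $A \cup I$ (by mutual indiscernibility of $I$ and $J$ over $A$), and finite satisfiability in $I$ lets one ``pull back'' any putative failure of indiscernibility of $J$ over $A \cup (I^\frown I^*)$ to a failure over $A \cup I$, we conclude $J$ is indiscernible over $A \cup (I^\frown I^*)$. Concretely: if some $\varphi(\bar y, \bar a, \bar c)$ with $\bar a$ from $I^*$, $\bar c$ from $I$, distinguished a pair of increasing tuples from $J$, then by finite satisfiability of the relevant average types in $I$ we could replace $\bar a$ by a tuple $\bar c'$ from $I$ with $\varphi(\bar y, \bar c', \bar c)$ still distinguishing them, contradicting indiscernibility of $J$ over $AI$.

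The main obstacle I anticipate is bookkeeping around the coherence of the $I^*$ produced by Proposition \ref{unique complete 2} -- making sure the \emph{same} $I^*$ simultaneously witnesses indiscernibility over $A \cup J$ and is built from average types over sets containing $J$ (so that the coheir argument for the first condition applies). This is genuinely fine because the construction in Proposition \ref{unique complete 2} fixes a single ultrafilter $\mathcal U$ on $I$ with all sets in it unbounded, and the resulting tower of average types over $B I^\frown\langle a_n,\dots,a_0\rangle$ is insensitive to how large $B$ is, provided $B$ contains everything we care about; so I would simply instantiate Proposition \ref{unique complete 2} with $B$ enlarged to contain $J$ (which is harmless, as $B \supseteq A$ is the only standing hypothesis), and then both halves of mutual indiscernibility drop out as above.
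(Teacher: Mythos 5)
Your proposal is correct, and its first half (that $I^\frown I^*$ is indiscernible over $AJ$, obtained by making sure $J$ is inside the parameter set $B$ of Proposition \ref{unique complete 2}) is exactly what the paper does; you are right that the corollary's statement only makes sense under the implicit assumption $J\subseteq B$, and you handle that more explicitly than the paper does. The second half is where you diverge. You prove that $J$ stays indiscernible over $A\cup I\cup I^*$ by opening up the construction of $I^*$: each new element realizes an average type along $\mathcal U$, hence a type finitely satisfiable in $I$, and (composing finite satisfiability along the tower) any formula over $AJ\cup I$ satisfied by a tuple from $I^*$ is satisfied by a tuple from $I$; so a failure of indiscernibility of $J$ over $A\bar a\bar b$ with $\bar b\in I^*$ pulls back to a failure over $AI$. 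The paper instead stays at the level of the \emph{statement} of Proposition \ref{unique complete 2}: since $I^\frown I^*$ is indiscernible over $AJ$ and $I$ is infinite, the tuple $\bar a\bar b$ (with $\bar a\in I$, $\bar b\in I^*$) has the same type over $AJ$ as some tuple $\bar a'\bar b'$ lying entirely in $I$, and whether $J$ is indiscernible over $A\bar a\bar b$ depends only on $\tp(\bar a\bar b/AJ)$; this again contradicts indiscernibility of $J$ over $AI$. Both reductions land on the same contradiction. The paper's version is slightly more economical in that it uses only the conclusion of Proposition \ref{unique complete 2} (so it would survive any other construction of $I^*$ with the same property), whereas yours leans on the coheir nature of $\Avg_{\mathcal U}$ and so is tied to this particular construction; on the other hand, your finite-satisfiability argument is a standard and robust coheir manipulation and would generalize to settings where one only knows $\tp(I^*/BI)$ is finitely satisfiable in $I$.
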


\begin{proof}
It follows from Proposition \ref{unique complete 2} that $I^\frown
I^*$ is indiscernible over $AJ$. Now, if $J$ was not indiscernible
over $I^\frown I^*$ there would be a finite tuple $\bar a^\frown
\bar b$ with $\bar a\in I$ and $\bar b\in I^*$ such that $J$ is
not indiscernible over $A\bar a \bar b$. Since $I^\frown I^*$ was
indiscernible over $AJ$, and since $I$ is infinite, we know that
there are some $\bar a', \bar b'\in I$ such that
\[
\tp(\bar a' \bar b'/AJ)=\tp(\bar a \bar b/AJ).
\]
But this would imply that $J$ is not indiscernible over $AI$,
contradicting our hypothesis.
\end{proof}

%


\section{additivity of the dp-rank}\label{2}

In this section we will prove the (sub-)additivity of the dp-rank
(Theorem \ref{MainTheorem}), which is the main result of this
paper.


\subsection{Warm up case: dp-minimal}

The first technical lemma essentially deals with sub-additivity
for dp-minimal types. It will also form the induction base for the
general case. Although we could modify the proof of the general
statement slightly so that it deals with rank 1 as well, we
decided to include the simple base case explicitly, since it
exemplifies the general technique that we are using.


\begin{lemma}\label{case1}
Let $a$ be any tuple such that $\tp(a/A)$ is dp-minimal, let
$B\supset A$, and let $\mathcal I$ be a set of mutually
$B$-indiscernible sequences. Then for any $n$, given any $n+1$
mutually $B$-indiscernible sequences in $\mathcal I$ at least $n$
of them are mutually indiscernible over $Ba$.
\end{lemma}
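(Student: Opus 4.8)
The plan is to prove the contrapositive: assume that among some $n+1$ mutually $B$-indiscernible sequences $I^0,\dots,I^n$ in $\mathcal I$, at least two fail to be indiscernible over $Ba$, say $I^0$ and $I^1$. I will derive a contradiction with dp-minimality of $\tp(a/A)$ by producing two sequences that are mutually $A$-indiscernible, neither of which is indiscernible over $Aa$. The natural candidates are $I^0$ and $I^1$ themselves, but the obstruction is that mutual indiscernibility over $B$ does \emph{not} obviously give mutual indiscernibility over $A$ of $I^0$ alone versus $I^1$ alone once we want to \emph{drop} to $A$; in fact dropping parameters is fine for indiscernibility, so $I^0$ and $I^1$ are each indiscernible over $A$ and mutually indiscernible over $A$. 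The real issue is the other direction: non-indiscernibility of $I^0$ over $Ba$ is witnessed by a formula with parameters from $B$, and we must convert this into non-indiscernibility over $Aa$, which may fail.

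The key device to handle this is Proposition \ref{unique complete 2} (and Corollary \ref{average}): I would first extend each $I^j$ to a long sequence $I^j{}^\frown (I^j)^*$ that remains indiscernible over every subset of $B$ over which $I^j$ was already indiscernible, and remains mutually indiscernible with the others over $A$. The point of lengthening is to absorb the finitely many $B$-parameters appearing in the witnessing formulas: if $I^0$ is not indiscernible over $Ba$, there is a finite tuple $\bar d \subseteq B$ and a formula $\psi$ witnessing an alternation along $I^0$ with parameters $\bar d a$; after extending $I^0$ far enough, the tuple $\bar d$ is ``surrounded'' by elements of the extended sequence, and since $I^0{}^\frown(I^0)^*$ is still indiscernible over $A$, one locates elements of $I^0$ realizing the same type over $Aa\bar d$ as the relevant parameters, thereby producing an alternation of a (possibly modified) formula \emph{over $Aa$ alone} along a subsequence drawn from $I^0$. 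The same maneuver applies to $I^1$. So after replacing each $I^j$ by a suitable subsequence of its extension, I obtain sequences $J^0, J^1$ that are mutually $A$-indiscernible (by Corollary \ref{average} applied twice, once for each index being extended), with $J^0$ not indiscernible over $Aa$ and $J^1$ not indiscernible over $Aa$.

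That directly contradicts $\rkdp(\tp(a/A)) \le 1$: by Definition \ref{dp-rank} with $\mu = 1$, given the $1+1 = 2$ mutually $A$-indiscernible sequences $J^0$ and $J^1$, at least one must be indiscernible over $Aa$. I should also note the reduction at the start: it suffices to prove the case $n=1$ (i.e., among any $2$ mutually $B$-indiscernible sequences at least one is indiscernible over $Ba$), since for general $n$ one argues that if two of the $n+1$ sequences both failed, then restricting attention to just those two (which are mutually $B$-indiscernible) already contradicts the $n=1$ statement — so the passage to general $n$ is immediate and the content is entirely in $n=1$.

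I expect the main obstacle to be the ``absorption of $B$-parameters'' step: making precise how extending $I^j$ via Proposition \ref{unique complete 2} lets one replace the external parameters $\bar d$ by elements internal to the sequence while preserving both the alternation and mutual $A$-indiscernibility with the other sequence. The subtlety is that the witnessing formula for non-indiscernibility over $Ba$ involves parameters one does not control; the resolution is that after lengthening, indiscernibility of the extended sequence over $A$ forces many copies of any given finite configuration, so a compactness/pigeonhole argument relocates the bad configuration onto a segment disjoint from (or in controlled position relative to) the parameters — and the clean way to phrase all of this is exactly the subsequence-extraction argument modeled on the proof of Corollary \ref{average} and the Claim inside Proposition \ref{prp:equivalence}. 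Everything else is bookkeeping with Definition \ref{dp-rank}.
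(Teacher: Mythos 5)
There is a genuine gap, and it is in the very first move. Your reduction to $n=1$ silently replaces the lemma's conclusion by a strictly weaker one. The lemma asserts that $n$ of the $n+1$ sequences are \emph{mutually} indiscernible over $Ba$, i.e.\ that each surviving $I_j$ is indiscernible over $Ba\cup\bigcup_{i\neq j}I_i$. Your argument (``if two sequences both failed, those two already contradict the $n=1$ case'') only shows that at most one sequence fails to be indiscernible over $Ba$ \emph{individually}. Knowing that $I_j$ is indiscernible over $BI_{\neq j}$ (mutual indiscernibility over $B$) and over $Ba$ (your conclusion) does not give indiscernibility over $BaI_{\neq j}$ — this is exactly the subtlety the paper flags in the remark following the sub-additivity corollary. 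Upgrading to mutual indiscernibility is the entire content of the paper's proof: it runs an induction on $n$, applies the induction hypothesis over the enlarged base $BI_{n+1}$, and uses Proposition \ref{unique complete 2} together with an automorphism to absorb finite tuples $\bar b$ drawn from the \emph{other sequences} into the base set, reducing ``$I_{n+1}$ not indiscernible over $Ba\bar b$'' to ``$I_{n+1}$ not indiscernible over $B'a$.'' None of that is recoverable from the $n=1$ case alone, and the mutual version is what the subsequent induction in the sub-additivity corollary actually needs.

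A secondary issue: the machinery you do deploy is aimed at a non-problem. You spend most of the proposal converting non-indiscernibility over $Ba$ into non-indiscernibility over $Aa$ so as to contradict dp-minimality of $\tp(a/A)$. This descent is unnecessary: dp-rank does not increase when the parameter set grows (and by Remark \ref{rem:doesnotdepend} it does not depend on the base at all), so $\tp(a/B)$ is itself dp-minimal or algebraic, and Definition \ref{dp-rank} applies directly to the two mutually $B$-indiscernible sequences — this is why the paper dispatches $n=1$ in one line. The extension-of-sequences technique (Proposition \ref{unique complete 2} and Corollary \ref{average}) is indeed the right tool, but in the paper it absorbs parameters coming from the other indiscernible sequences, not from $B$; redirecting your ``absorption'' step to that purpose, inside an induction on $n$, is what is actually required.
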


\begin{proof}
We will do an induction on $n$. Since any extension of a
dp-minimal type is dp-minimal (or algebraic), if $n=1$ there is
nothing to prove.

Assume now that $\mathcal I:=\{I_1, \dots ,I_{n+1}\}$ is a set of
mutually $B$-indiscernible sequences for $B\supset A$. By
definition $\{I_1, \dots ,I_n\}$ are mutually indiscernible over
$BI_{n+1}$ so we can, by the induction hypothesis, find $n-1$ of
the $I_j$'s which are mutually indiscernible over $BI_{n+1}a$; we
may assume without loss of generality that $\{I_1, \dots,
I_{n-1}\}$ are mutually indiscernible over $BI_{n+1}a$. If
$I_{n+1}$ was indiscernible over $\{ a \}\cup B\cup \bigcup \{
I_1, \dots, I_{n-1}\}$ the sequence $\{I_1, \dots, I_{n-1},
I_{n+1}\}$ would satisfy the conditions of the claim, so we may
assume that this is not the case. Since non indiscernibility can
be witnessed by a finite sequence, we will assume for the rest of
the proof that $I_{n+1}$ is not indiscernible over $Ba\bar b$ for
some $\bar b\in \bigcup \{I_1, \dots, I_{n-1}\}$, and that $\{I_1,
\dots, I_{n-1}\}$ are mutually indiscernible over $I_{n+1}Ba$.

\begin{clm}\label{the claim}
It is enough to prove Lemma \ref{case1} under the assumption that
$I_{n+1}$ is not indiscernible over $Ba$.
\end{clm}

\begin{proof}
For each $k$ with $1\leq k<n$ we will inductively define a
``continuation'' $I_k^*$ of $I_k$ in the following way:

Suppose we have picked $I_j^*$ for $j<k$, and define $I^*_k$ be a
sequence indexed by $\omega$ as in Proposition \ref{unique
complete 2} for $I=I^k$ and $B=B\cup \bigcup_{i=1}^n
I_i\cup\bigcup_{j=1}^{k-1} I_j^*\cup \{a\}$ so that given any
subset $A\subseteq B\cup \bigcup_{i=1}^n
I_i\cup\bigcup_{j=1}^{k-1} I_j^*\cup \{a\}$, the sequence
$I_k^\frown I_k^*$ is indiscernible over $A$ whenever $I_k$ was
indiscernible over $A$.

It follows from the construction and Corollary \ref{average} that

\begin{itemize}
\item $\{I_1{}^\frown I_1^*, \dots, I_{n-1}{}^\frown  I_{n-1}^*, I_n,
I_{n+1}\}$ is a set of mutually $B$-indiscernible sequences,

\item $\{I_1{}^\frown I_1^*, \dots, I_{n-1}{}^\frown  I_{n-1}^*\}$ is
mutually indiscernible over $I_{n+1}Ba$, and

\item $I_{n+1}$ is not indiscernible over $Ba\bar b$ for some
$\bar b\in \bigcup \{I_1, \dots, I_{n-1}\}$.
\end{itemize}

Since the sequences in $\{I_1{}^\frown I_1^*, \dots,
I_{n-1}{}^\frown I_{n-1}^*\}$ are mutually indiscernible over
$I_{n+1}Ba$, there is an automorphism fixing $I_{n+1}Ba$ and
sending $\bar b$ to some $\bar b'\in \bigcup \{I_1^*, \dots,
I_{n-1}^*\}$. Now we have

\begin{itemize}
\item $\{I_1, \dots, I_{n-1}, I_n, I_{n+1}\}$ is a set of $B \bar
b'$-mutually indiscernible sequences,

\item $\{I_1, \dots, I_{n-1}\}$ is mutually indiscernible over
$I_{n+1}B\bar b'a$, and

\item $I_{n+1}$ is not indiscernible over $Bb'a$,
\end{itemize}
which, replacing $B$ with $B\bar b'$, is precisely the conditions
we started with plus the conclusion of the claim. Since any
$n$-subset of mutually $Bb'a$-indiscernible sequences of $\{I_1,
\dots, I_{n-1}, I_n, I_{n+1}\}$ would in particular be
$Ba$-indiscernible, the claim is proved.
\end{proof}

Now the lemma follows almost immediately. Since $\{I_2, I_3 \dots,
I_n, I_{n+1}\}$ are mutually indiscernible over $I_1B$, there
must, by induction hypothesis, be a subset of $n-1$ mutually
$I_1Ba$-indiscernible sequences. But such set cannot contain
$I_{n+1}$ since, by hypothesis given in Claim \ref{the claim},
this sequence is not (by itself) indiscernible over $Ba$. So
$\{I_2, I_3 \dots, I_n\}$ are mutually indiscernible over $I_1Ba$.
In exactly the same way we can prove that $\{I_1, I_3 \dots,
I_n\}$ are mutually indiscernible over $I_2Ba$ which in particular
implies that $I_1$ is indiscernible over $B\cup \{I_2, I_3 \dots,
I_n\}\cup \{a\}$. So $\{I_1,I_2, I_3 \dots, I_n\}$ are mutually
indiscernible over $Ba$, as required.
\end{proof}


\begin{corollary}(Sub-additivity of dp-rank for dp-minimal types)
    Let $\tp(a_i/A)$ be dp-minimal for $1\le i\le k$. Then the dp-rank of $\tp(a_1\ldots a_k/A)$ is at most $k$.
\end{corollary}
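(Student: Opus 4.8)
The plan is to derive the corollary directly from Lemma \ref{case1} by induction on $k$, using the semantic definition of dp-rank (Definition \ref{dp-rank}) and the fact that dp-rank does not depend on the base set (Remark \ref{rem:doesnotdepend}). The base case $k=1$ is exactly the hypothesis. For the inductive step, write $a = a_1\ldots a_k$ and $a' = a_1\ldots a_{k-1}$, so that by induction $\rkdp(\tp(a'/A)) \le k-1$, i.e.\ $\tp(a'/A)$ has dp-rank $\le k-1$. We want to show $\rkdp(\tp(a/A)) \le k$, that is: given any $k+1$ mutually $A$-indiscernible sequences, at least one is indiscernible over $Aa$.

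First I would take $k+1$ mutually $A$-indiscernible sequences $I_1,\ldots,I_{k+1}$. Since $\tp(a'/A)$ has dp-rank $\le k-1$, among these $k+1 = (k-1) + 2$ sequences at least two are indiscernible over $Aa'$ — more precisely, I would apply Lemma \ref{case1} with the set $\mathcal I = \{I_1,\ldots,I_{k+1}\}$, the tuple $a'$, and $B = A$: it gives that among any $(k-1)+1 = k$ of them, at least $k-1$ are \emph{mutually} indiscernible over $Aa'$; applying this to all $k+1$ sequences one gets (say) that $I_1,\ldots,I_{k-1}, I_k$ and also, rearranging, enough mutual indiscernibility over $Aa'$ to conclude that at least two of the original sequences, say $I_k, I_{k+1}$, are indiscernible over $A' := A\cup\{a'\}\cup\bigcup\{I_1,\ldots,I_{k-1}\}$ — in particular $I_k$ and $I_{k+1}$ are each indiscernible over $Aa'$, and they remain mutually $Aa'$-indiscernible (again by Lemma \ref{case1}, since mutual indiscernibility of a $k$-subset over $Aa'$ is what the lemma delivers). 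Now I would regard $a_k$ as the relevant tuple: $\tp(a_k/Aa')$ is an extension of $\tp(a_k/A)$, hence still dp-minimal or algebraic; either way it has dp-rank $\le 1$. Applying the definition of dp-rank $\le 1$ to the two mutually $Aa'$-indiscernible sequences $I_k, I_{k+1}$, at least one of them — say $I_{k+1}$ — is indiscernible over $Aa'a_k = Aa$. That is the desired conclusion.

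The main obstacle, and the step I would be most careful with, is the bookkeeping of \emph{which} sequences are mutually indiscernible over \emph{which} sets: Lemma \ref{case1} yields mutual indiscernibility of a sub-collection over $Ba$, and one must chase this through so that the final pair $I_k, I_{k+1}$ is genuinely mutually $Aa'$-indiscernible (not just individually indiscernible over $Aa'$), since the $k=1$ (dp-minimal) case of the definition of dp-rank for $\tp(a_k/Aa')$ must be applied to a pair of mutually $Aa'$-indiscernible sequences. This is precisely what Lemma \ref{case1} is designed to provide, applied first with the tuple $a'$ (using $\rkdp(\tp(a'/A))\le k-1$, which by induction means the dp-minimal-type lemma iterated, i.e.\ exactly the hypothesis-form of Lemma \ref{case1} available after unwinding the induction) and then with $a_k$ over the enlarged base $Aa'$. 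One should also note explicitly that dp-rank being computed over $A$ versus over $Aa'$ does not matter for $\tp(a_k/-)$ by Remark \ref{rem:doesnotdepend}, although here we only need the elementary monotonicity that an extension of a dp-minimal type has dp-rank $\le 1$. Modulo this combinatorial care, no further computation is needed.
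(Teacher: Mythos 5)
Your overall decomposition ($a = a'a_k$ with $a'=a_1\ldots a_{k-1}$) is sound, and the final step --- applying dp-minimality of $\tp(a_k/Aa')$ to a pair of mutually $Aa'$-indiscernible sequences --- is exactly right. The gap is in how you produce that pair. Your induction hypothesis is only $\rkdp(\tp(a'/A))\le k-1$, which by Definition \ref{dp-rank} guarantees that among any $k$ mutually $A$-indiscernible sequences \emph{one} is indiscernible over $Aa'$; it does not by itself give two sequences that are \emph{mutually} indiscernible over $Aa'$ out of $k+1$. That stronger statement is the implication $S_{k-1,1}\Rightarrow S_{k-1,2}$, i.e.\ Proposition \ref{strong dp-rank}, whose proof is the real content of the finite-rank case and needs Lemma \ref{the claim2} and the extension machinery of Section \ref{sec:extending_indiscernible_sequences}. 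Your attempted shortcut --- invoking Lemma \ref{case1} with the tuple $a'$ in place of $a$ --- is not legitimate: that lemma requires $\tp(a'/A)$ to be dp-minimal, and a $(k-1)$-tuple of dp-minimal elements generally has rank $k-1>1$; indeed the conclusion you extract from it (``among any $k$ of them at least $k-1$ are mutually indiscernible over $Aa'$'') is false in general for a type of rank $k-1$. The parenthetical ``Lemma \ref{case1} iterated, after unwinding the induction'' gestures at the correct repair but is not carried out, and the induction as you have set it up does not carry the strengthened statement needed to make it work.

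The fix is to reverse the order of the two steps, which is what the paper does: first apply Lemma \ref{case1} to the genuinely dp-minimal element $a_k$ over $A$ with $n=k$, obtaining $k$ of the $k+1$ sequences mutually indiscernible over $B:=Aa_k$; then note that $a_1,\ldots,a_{k-1}$ are dp-minimal (or algebraic) over $B$ and apply the induction hypothesis \emph{over the enlarged base $B$} to these $k$ sequences, getting one of them indiscernible over $Ba_1\ldots a_{k-1}=Aa_1\ldots a_k$. In that order only the weak (rank $\le k-1$) form of the induction hypothesis is ever needed, because the mutual indiscernibility over the enlarged base is supplied at each stage by Lemma \ref{case1} applied to a single dp-minimal element. (Equivalently, you could strengthen your induction hypothesis to the $S_{k-1,n}$ form and peel off $a_1,\ldots,a_{k-1}$ one at a time by $k-1$ successive applications of Lemma \ref{case1}; but as written your argument does not close.)
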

\begin{proof}
    By induction on $k$. For $k=1$ there is nothing to do. Assume that the Corollary holds for all sets $B$ and tuples of less than $k$ dp-minimal (over $B$) elements.

    Now fix $A$ and $a_1,\ldots,a_k$ dp-minimal over $A$. Let $I_1,\ldots, I_k,I_{k+1}$ be mutually indiscernible over $A$. By the lemma above, without loss of generality, $I_1,\ldots,I_k$ are mutually indiscernible over $Aa_k$, call it $B$.

    Recall that extensions of dp-minimal types have rank at most 1, so we may assume that $a_1, \ldots, a_{k-1}$ are dp-minimal over $B$. Hence by the induction hypothesis, dp-rank of the tuple $a_1\ldots a_{k-1}$ over $B$ is at most $k-1$. By definition, one of the sequences $I_1,\ldots,I_k$ is indiscernible over $Ba_1\ldots a_{k-1}=Aa_1\ldots a_k$, which is exactly what we needed.

\end{proof}

\begin{remark}
Notice that in Claim \ref{the claim} we did not assume that $I_k$
is indiscernible over $I_n\cup \{a\}$. That is, we know that $I_k$
(for $k<n$) is indiscernible over $BI_{\neq k}$ and over $BI_{\neq
k,n}a$, but not necessarily $BI_{\neq k}a$.
This (and the analogue issue in Lemma \ref{the claim2}) is the
reason we could not work using just Fact \ref{unique complete},
and decided to use the extensions described by Proposition
\ref{unique complete 2}.

\end{remark}

\medskip

\subsection{The finite case}

The following proposition, from which the main result of this
section will follow easily, is a generalization of Lemma
\ref{case1}.

\begin{proposition}\label{strong dp-rank}
Let $a$ be an element such that $\tp(a/A)$ has dp-rank at most
$k$, and let $\mathcal I:=\{I_1, \dots I_m\}$ be mutually
$B$-indiscernible sequences with $m>k$. Then there is an
$m-k$-subset of $\mathcal I$ of sequences which are mutually
indiscernible over $Ba$.
\end{proposition}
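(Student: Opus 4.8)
The plan is to generalize the argument of Lemma \ref{case1} by induction on $m$, keeping track not just of how many sequences survive as mutually indiscernible over $Ba$, but using the finite dp-rank hypothesis at the ``base'' in place of dp-minimality. First I would set up the induction: for $m = k+1$ this is precisely the definition of $\rkdp(\tp(a/A)) \le k$ (using Remark \ref{rem:doesnotdepend}, so that the rank does not depend on whether we compute it over $A$ or over $B$), so at least $1 = m-k$ of the sequences is indiscernible over $Ba$; but we actually need the stronger conclusion that it can be taken \emph{mutually} indiscernible, which for a single sequence is automatic. For the inductive step, assume $m > k+1$ and the proposition holds for all smaller values of $m$ (and all sets, tuples, and ranks $\le k$). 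Given $\mathcal I = \{I_1, \dots, I_m\}$ mutually $B$-indiscernible, the sequences $\{I_1,\dots,I_{m-1}\}$ are mutually indiscernible over $BI_m$, so by the induction hypothesis there is an $(m-1-k)$-subset, say $\{I_1,\dots,I_{m-1-k}\}$, mutually indiscernible over $BI_m a$.

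Next I would run the analogue of Claim \ref{the claim}: if $I_m$ is already indiscernible over $B a \cup \bigcup_{i\le m-1-k} I_i$, then $\{I_1,\dots,I_{m-1-k},I_m\}$ is an $(m-k)$-subset mutually indiscernible over $Ba$ and we are done; otherwise non-indiscernibility is witnessed by a finite tuple $\bar b \in \bigcup_{i \le m-1-k} I_i$. Using Proposition \ref{unique complete 2} and Corollary \ref{average} exactly as in the dp-minimal case, I would extend each $I_k$ (for $k \le m-1-k$) by a continuation $I_k^*$, one at a time, preserving all the mutual-indiscernibility relations over $B$ and over $BI_m a$; then, since the extended sequences are mutually indiscernible over $BI_m a$, an automorphism fixing $BI_m a$ moves $\bar b$ into $\bigcup I_k^*$, and replacing $B$ by $B\bar b'$ reduces to the case where $I_m$ is not indiscernible over $Ba$ \emph{itself}. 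So, as in Lemma \ref{case1}, we may assume $I_m$ is not indiscernible over $Ba$.

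Finally, with that reduction in hand, I would exploit the same ``cyclic'' trick as at the end of Lemma \ref{case1}, but now iterated: for each $j$ with $m-1-k < j \le m-1$ (the ``extra'' sequences not yet known to be good over $Ba$), the family $\mathcal I \setminus \{I_j\}$ is mutually indiscernible over $B I_j$ and has $m-1$ members, so by the induction hypothesis it contains an $(m-1-k)$-subset mutually indiscernible over $B I_j a$; since $I_m$ alone is not indiscernible over $Ba$ (hence not over $BI_j a$), that subset avoids $I_m$, and iterating/combining these over the various $j$ shows that enough of the $I_i$ with $i \le m-1$ are mutually indiscernible over $Ba$ and moreover indiscernible over the union of the others together with $a$, yielding an $(m-k)$-subset mutually indiscernible over $Ba$. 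The main obstacle I anticipate is the bookkeeping in this last step: unlike the dp-minimal case where only one ``bad'' sequence had to be peeled off, here we must simultaneously discard $k$ of them and argue that the choices made for different $j$ are compatible (i.e.\ that one can select a single $(m-k)$-subset that works), which is where the precise counting $m - (m-1-k) = k+1$ and careful application of the induction hypothesis must be made to fit together — this is presumably why the authors flag the finite case as the technically heaviest part of the section.
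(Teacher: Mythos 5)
Your setup (induction with $k$ fixed, the reduction via continuations, Proposition \ref{unique complete 2} and an automorphism to the case where $I_m$ is not indiscernible over $Ba$ itself) matches the paper's Lemma \ref{the claim2}. But the final ``cyclic'' step is a genuine gap, and it is exactly the one you flag yourself. In the dp-minimal case the combination works only because the good $(n-1)$-subset of $\mathcal I'\setminus\{I_j\}$ is \emph{forced}: it must avoid $I_{n+1}$ and has no further room to discard anything, so it is precisely $\{I_1,\dots,I_n\}\setminus\{I_j\}$, and the subsets obtained for different $j$ are automatically complementary. For $k\ge 2$ each application of the induction hypothesis still discards $k-1$ sequences \emph{beyond} $I_j$ and $I_m$, these discards are not determined, and there is no reason the choices for different $j$ can be glued into a single $(m-k)$-subset. ``Iterating/combining these over the various $j$'' is not an argument, and I see no way to repair it along these lines.

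The paper avoids the combination problem entirely by running the step as a proof by contradiction. In the notation of its Proposition \ref{rephrase}: for \emph{each} $i$ (not just the ``extra'' indices), either $I_i$ is indiscernible over $B\mathcal I_i a$ for the good $n$-subset $\mathcal I_i$ of $\mathcal I'\setminus\{I_i\}$ produced by the induction hypothesis --- in which case $\mathcal I_i\cup\{I_i\}$ already finishes the proof --- or it is not. If the second alternative holds for every $i$, one applies Lemma \ref{the claim2} once per index, each time enlarging the base $B$ by the image $\bar b'$ of the witnessing tuple (the definition of $S_{k,n}$ quantifies over all such enlargements, so the hypotheses survive), until \emph{every} $I_i$ fails to be indiscernible over the enlarged $Ba$ while $\mathcal I'$ remains mutually indiscernible over it. Having $k+n+1>k$ sequences none of which is indiscernible over $Ba$ contradicts $S_{k,1}$, i.e.\ the dp-rank bound. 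So no selection of a single good subset from the various $\mathcal I_i$ is ever needed; you should replace your last paragraph with this contradiction argument.
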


To prove Proposition \ref{strong dp-rank}, we rephrase the statement
in a way that will allow us to do an easy induction. For this we
will need to following definition.

\begin{definition}
Let $\mathcal I:=\{I_1, \dots I_m\}$ be mutually $A$-indiscernible
sequences, and let $a$ be any tuple. We will say that the pair
$\mathcal I,a$ satisfies $S_{k,n}$ if the following conditions
hold:

\begin{itemize}
\item $|\mathcal I|\ge k+n$,

\item For any $B\supset A$ such that $\mathcal I:=\{I_1, \dots
I_m\}$ are still mutually indiscernible over $B$, given any
$n+k$ sequences in $\mathcal I$ at least $n$ of them remain
mutually indiscernible over $Ba$.
\end{itemize}
\end{definition}

So in particular, with this notation, a type $p(x)$ over $A$ has
dp-rank less than or equal to $k$ if and only if for any
realization $a$ of $p(x)$ and every set $\mathcal I$ of mutually
indiscernible sequences where $|\mathcal I|>k$, we have that
$\mathcal I, a$ satisfies $S_{k,1}$.


With this notation we can state a generalization of Proposition
\ref{strong dp-rank}, the proof of which will admit a clear
induction argument. We will start by proving the following
analogue of Claim \ref{the claim}.

\begin{lemma}\label{the claim2}Let $a$ be an element, and let
$\mathcal I$ be a set of mutually $A$-indiscernible sequences. Let
$\mathcal J$ be a subset of $\mathcal I$ and $I\in \mathcal I$ be
such that $\mathcal J$ is mutually indiscernible over $AIa$ and
such that $I$ is not indiscernible over $A\mathcal J a$. Then we
can extend $A$ to a set $B$ such that the following hold:
\begin{itemize}
\item $\mathcal I$ is mutually indiscernible over $B$.

\item $I$ is not indiscernible over $Ba$.

\item $\mathcal J$ is mutually indiscernible over $Ba$.
\end{itemize}
\end{lemma}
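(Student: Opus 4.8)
The plan is to mimic the proof of Claim \ref{the claim} from Lemma \ref{case1}, but carried out at the level of generality of $S_{k,n}$. The point of Lemma \ref{the claim2} is to remove the ``finite tuple'' $\bar b$ from inside $\mathcal J$ that witnesses non-indiscernibility of $I$ over $A\mathcal J a$, replacing it by a tuple $\bar b'$ sitting in a ``continuation'' of $\mathcal J$, which can then be absorbed into the base. So first I would fix a finite $\bar b \in \bigcup \mathcal J$ witnessing that $I$ is not indiscernible over $A\bar b a$ (such a finite witness exists since non-indiscernibility is a property of a finite subsequence).

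Next I would enumerate $\mathcal J = \{J_1, \dots, J_r\}$ and inductively build continuations $J_\ell^*$ of each $J_\ell$ using Proposition \ref{unique complete 2}: having constructed $J_1^*, \dots, J_{\ell-1}^*$, apply Proposition \ref{unique complete 2} with the sequence $J_\ell$ and the parameter set $A \cup \bigcup \mathcal I \cup \bigcup_{j<\ell} J_j^* \cup \{a\}$, to get $J_\ell^*$ such that $J_\ell {}^\frown J_\ell^*$ remains indiscernible over every subset of this parameter set over which $J_\ell$ was indiscernible. Using Corollary \ref{average} repeatedly (applied to $J_\ell$ against each of the other sequences and against $a$-related data), I get that, after this replacement, the following still hold: the full set $\mathcal I$ with each $J_\ell$ replaced by $J_\ell{}^\frown J_\ell^*$ is mutually $A$-indiscernible; the lengthened $\mathcal J$ (i.e. $\{J_1{}^\frown J_1^*, \dots\}$) is mutually indiscernible over $AIa$; and $I$ is still not indiscernible over $A\bar b a$, with $\bar b$ in the original (short) part. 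Since the lengthened $\mathcal J$ is mutually indiscernible over $AIa$, there is an automorphism fixing $AIa$ pointwise and sending $\bar b$ to a tuple $\bar b'$ lying in $\bigcup \{J_1^*, \dots, J_r^*\}$ — i.e.\ in the ``new'' part. Applying this automorphism, I now have: $\mathcal I$ mutually $A\bar b'$-indiscernible, $\mathcal J$ mutually indiscernible over $AI\bar b' a$, and $I$ not indiscernible over $A\bar b' a$. Setting $B := A\bar b'$ gives exactly the three bullet points of the conclusion (note $A\mathcal J a \supseteq A\bar b' a$ is not needed — we only need $I$ not indiscernible over $Ba$, which we have).

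The main obstacle — and the reason this is genuinely a lemma rather than a one-line remark — is the bookkeeping in the inductive construction of the $J_\ell^*$: one must be careful that applying Proposition \ref{unique complete 2} to $J_\ell$ does not destroy the indiscernibility of $I$ over parameters involving the already-lengthened $J_1{}^\frown J_1^*, \dots$, nor the mutual indiscernibility of the whole array. This is handled exactly as in Claim \ref{the claim}: Proposition \ref{unique complete 2} preserves indiscernibility over \emph{every} subset of the chosen base over which the original sequence was indiscernible, and Corollary \ref{average} upgrades this to preservation of mutual indiscernibility; the order of the induction ensures each $J_\ell$'s continuation sees all the relevant data. The second delicate point, as flagged in the Remark following Lemma \ref{case1}, is that we are \emph{not} assuming $\mathcal J$ is indiscernible over $A(\mathcal I \setminus (\mathcal J \cup \{I\}))a$ — only over $AIa$ — so when choosing the base set for Proposition \ref{unique complete 2} we must include all of $\bigcup\mathcal I$ (so that mutual $B$-indiscernibility of the full $\mathcal I$ survives) while being content that the ``$a$-indiscernibility'' we propagate is only the $AIa$-version; this is precisely what makes Fact \ref{unique complete} insufficient and forces the use of Proposition \ref{unique complete 2}.
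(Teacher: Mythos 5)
Your proposal is correct and follows essentially the same route as the paper's own proof: inductively lengthening each sequence in $\mathcal J$ via Proposition \ref{unique complete 2} over the base $A\cup\bigcup\mathcal I\cup\bigcup_{j<\ell}J_j^*\cup\{a\}$, invoking Corollary \ref{average} to preserve the mutual indiscernibility data, and then moving the finite witness $\bar b$ into the continuations by an automorphism fixing $AIa$, finally setting $B:=A\bar b'$. The only cosmetic difference is that the paper explicitly reduces to the case of finite $\mathcal J$ (remarking that the general case follows by transfinite induction), which your enumeration implicitly assumes.
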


\begin{proof}
We will assume that $\mathcal J$ is finite, which is the case we
need for Theorem \ref{MainTheorem}. However, the general case
follows exactly in the same manner, using ordinal enumerations of
the sequences in $\mathcal J$ and transfinite induction.

We can enumerate $\mathcal J:=\{J_1, \dots, J_n\}$ and, as in the
proof of Claim \ref{the claim}, define a ``continuation''
$J_t^*:=\langle a_i^*\rangle_{i\in \omega}$ for every sequence
$J_t\in \mathcal J$ inductively (on $t$) having that $J_t^\frown
J_t^*$ is indiscernible over any
\[
C\subseteq A\cup \bigcup \mathcal I \cup\bigcup_{j=1}^{t-1}
J_j^*\cup \{a\}
\]
over which $J_t$ was already indiscernible.

Because $\mathcal J$ was mutually indiscernible over $AIa$ it
follows from Corollary \ref{average} that

\begin{itemize}
\item $\{J_1^\frown J_1^*, \dots, J_{n}^\frown J_{n}^*\}\cup
\left( \mathcal I\setminus \mathcal J\right)$ is a set of
$A$-mutually indiscernible sequences,

\item $\{J_1^\frown J_1^*, \dots, J_{n}^\frown  J_{n}^*,\}$ is
indiscernible over $IAa$, and

\item $I$ is not indiscernible over $Aa\bar b$ for some $\bar b\in
\bigcup \{J_1, \dots, J_{n}\}$.
\end{itemize}

Since $\{J_1^\frown J_1^*, \dots, J_{n}^\frown  J_{n}^*,\}$ is
indiscernible over $IAa$ there is an automorphism fixing $IAa$ and
sending $\bar b$ to some $\bar b'\in \bigcup \{J_1^*, \dots,
J_{n}^*\}$. Now we have

\begin{itemize}
\item $\mathcal I$ is a set of $A \bar b'$-mutually indiscernible
sequences,

\item $\{J_1, \dots, J_{n}\}$ is indiscernible over $IA\bar b'a$,
and

\item $I$ is not indiscernible over $Ab'a$.
\end{itemize}
Letting $B:=Ab'$ completes the claim.
\end{proof}

\begin{proposition}\label{rephrase}
Let $a$ be an element, $n$ be any natural number, and let
$\mathcal I:=\{I_1, \dots I_m\}$ be mutually $A$-indiscernible
sequences with $m\geq k+n$ such that
 $\mathcal I,a$ satisfies $S_{k,1}$. Then $\mathcal I, a$ satisfies $S_{k,n}$.
\end{proposition}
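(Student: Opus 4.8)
The plan is to prove the statement by induction on $n$. The base case $n = 1$ is precisely the hypothesis $S_{k,1}$, so there is nothing to do. For the inductive step, assume the claim holds for $n$ and suppose $\mathcal I, a$ satisfies $S_{k,1}$ with $|\mathcal I| = m \ge k + n + 1$; we must show $\mathcal I, a$ satisfies $S_{k,n+1}$. So let $B \supseteq A$ be any set over which the sequences of $\mathcal I$ remain mutually indiscernible, and fix $k + n + 1$ of them, say $I_1, \dots, I_{k+n+1}$; we need to find $n+1$ among them that are mutually indiscernible over $Ba$.

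First I would apply $S_{k,1}$: since $I_1, \dots, I_{k+n+1}$ are mutually $B$-indiscernible and $\mathcal I, a$ satisfies $S_{k,1}$, actually I should apply it to a suitable sub-collection — among any $k+1$ of them at least one is indiscernible over the appropriate base. More usefully, I would note that $\{I_1, \dots, I_{k+n}\}$ are mutually indiscernible over $B I_{k+n+1}$, so the induction hypothesis (with base $B I_{k+n+1}$ in place of $B$) yields $n$ of them, say $I_1, \dots, I_n$ after reindexing, that are mutually indiscernible over $B I_{k+n+1} a$. If in addition $I_{k+n+1}$ were indiscernible over $B a \cup \bigcup\{I_1,\dots,I_n\}$, then $\{I_1,\dots,I_n,I_{k+n+1}\}$ would be a set of $n+1$ sequences mutually indiscernible over $Ba$ and we would be done. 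So assume not: by finite character, $I_{k+n+1}$ is not indiscernible over $Ba\bar b$ for some finite $\bar b \in \bigcup\{I_1,\dots,I_n\}$, while $\{I_1,\dots,I_n\}$ is mutually indiscernible over $B I_{k+n+1} a$. This is exactly the configuration of Lemma \ref{the claim2} (with $\mathcal J = \{I_1,\dots,I_n\}$, $I = I_{k+n+1}$), so we may replace $B$ by a larger set — still called $B$ — over which all of $I_1,\dots,I_{k+n+1}$ are mutually indiscernible, $\{I_1,\dots,I_n\}$ is mutually indiscernible over $B I_{k+n+1} a$, and $I_{k+n+1}$ is not indiscernible over $Ba$ by itself.

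Now the argument finishes as in Lemma \ref{case1}. For each $j$ with $1 \le j \le k+n$, the $k+n$ sequences $\{I_i : i \ne j,\ 1 \le i \le k+n+1\}$ (which include $I_{k+n+1}$) are mutually indiscernible over $B I_j$, so by the induction hypothesis applied over the base $B I_j$ there are $n$ of them that are mutually indiscernible over $B I_j a$; since $I_{k+n+1}$ is not indiscernible over $Ba$, hence not over $B I_j a$, this $n$-subset cannot contain $I_{k+n+1}$, so it is an $n$-subset of $\{I_i : i \ne j,\ i \le k+n\}$ mutually indiscernible over $B I_j a$. Applying this for each $j$ and using that an $n$-subset mutually indiscernible over $B I_j a$ together with $I_j$ indiscernible over $B a$ plus that subset gives information about which sequences of $\{I_1,\dots,I_{k+n}\}$ sit indiscernibly over $Ba$, one concludes (exactly as in the last paragraph of the proof of Lemma \ref{case1}, where the case $k=0$, $n$ arbitrary is the prototype, only now carrying the extra $k$ ``slack'' sequences through) that some $n+1$ of $I_1,\dots,I_{k+n}$ are mutually indiscernible over $Ba$. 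The main obstacle is purely bookkeeping: making sure that in each application of the induction hypothesis the ``ambient'' sequences of $\mathcal I$ not currently under consideration are kept in the base set so that the $S_{k,n}$ conclusion applies to the enlarged base, and that the reindexings are consistent; the conceptual content is entirely contained in Lemma \ref{the claim2} and the $S_{k,1}$-to-$S_{k,n}$ pigeonhole bootstrapping, both of which mirror the dp-minimal case.
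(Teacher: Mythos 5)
Your setup matches the paper's: induction on $n$ for fixed $k$, application of the inductive hypothesis over the base $BI_{k+n+1}$ to extract an $n$-subset mutually indiscernible over $BI_{k+n+1}a$, and an appeal to Lemma \ref{the claim2} to reduce to the case where the offending sequence fails to be indiscernible over $Ba$ alone. The gap is in your final step, where you assert that the argument ``finishes as in the last paragraph of Lemma \ref{case1}.'' That paragraph works only because when $k=1$ the pigeonhole is exact: for each $j$, the inductive hypothesis applied to the $n$ sequences $\{I_i : i\neq j\}$ over $BI_j$ yields an $(n-1)$-subset mutually indiscernible over $BI_ja$, and once $I_{n+1}$ is excluded there are only $n-1$ candidates left, so the subset is forced to be all of them; it is this forcing that lets the conclusions for the various $j$ be glued into mutual indiscernibility of $\{I_1,\dots,I_n\}$ over $Ba$. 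For $k\ge 2$ the inductive hypothesis gives, for each $j$, \emph{some} $n$-subset $S_j$ of the $k+n-1$ remaining candidates, and these $S_j$ are in no way forced to be compatible with one another (for large $k$ they could even be pairwise disjoint); excluding the single sequence $I_{k+n+1}$ pins nothing down. So this is not ``purely bookkeeping'': the direct construction does not go through.

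The paper sidesteps this by arguing by contradiction rather than by direct construction. For \emph{every} $i\le k+n+1$ it extracts $\mathcal I_i\subseteq\mathcal I'\setminus\{I_i\}$ of size $n$ mutually indiscernible over $BI_ia$; if some $I_i$ is indiscernible over $B\mathcal I_ia$ one is done, and otherwise Lemma \ref{the claim2} is applied to each $i$ in succession, extending $B$ at every stage (failure of indiscernibility over $Ba$ persists as $B$ grows, and the needed hypotheses are re-derived from the inductive hypothesis over the enlarged base), until \emph{all} of $I_1,\dots,I_{k+n+1}$ fail to be indiscernible over $Ba$ while $\mathcal I'$ remains mutually indiscernible over $B$. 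Since $k+n+1\ge k+1$, this contradicts $S_{k,1}$. You should replace your last paragraph with this iteration; what you have written then becomes the first round of it.
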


\begin{proof}
Notice that we have already proved the result assuming $k=1$. It
is enough to show that $S_{k,n}$ implies $S_{k,n+1}$, and we will
show this by induction on $n$ (for a fixed $k$).

So let $\mathcal I$ be a set of $m$ mutually $B$-indiscernible
sequences, $a$ be an element such that $\mathcal I,a$ satisfies
$S_{k,i}$ for all $1\leq i\leq n$ (so in particular, it satisfies
$S_{k,n}$ and $S_{k,1}$) and let $\mathcal I':=\{I_1, \dots
I_{k+n+1}\}$ be a subset of $\mathcal I$; we will prove that
$\mathcal I'$ contains a subset of size $n+1$ of sequences which are
mutually indiscernible over $Ba$.


Let $I_i$ be any sequence in $\mathcal I'$. Since $\mathcal I'
\setminus \{I_i\}$ is a set of $n+k$ mutually indiscernible
sequences over $BI_i$, there is a subset $\mathcal I_i$ of size
$n$ which are mutually indiscernible over $BI_ia$. If $I_i$ is
indiscernible over $B\mathcal I_i a$ then we would have a set of
size $n+1$ of mutually indiscernible sequences over $Ba$ and the
proposition would be satisfied. So we may assume towards a
contradiction that for every $i$ the sequence $I_i$ is not
indiscernible over $B\mathcal I_ia$.

Now, for each $i$ we apply Lemma \ref{the claim2} extending $B$
until we get $I_i$ not indiscernible over $Ba$ for all $i$ and
$\mathcal I'$ are mutually indiscernible over $B$. This contradicts
$S_{k,1}$ of $\mathcal I$  (and $S_{k,n}$ too).
\end{proof}

This completes the proof of Proposition \ref{strong dp-rank}.

\begin{theorem}\label{MainTheorem}
Let $a_1,a_2$ be tuples such that $\rkdp(\tp(a_i/A))\leq k_i$ for
$i\in \{1,2\}$. Then $\rkdp(\tp(a_1, a_2/A))\leq k_1+k_2$.
\end{theorem}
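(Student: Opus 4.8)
The plan is to derive Theorem \ref{MainTheorem} from Proposition \ref{strong dp-rank} by applying that proposition twice, once for $a_1$ and once for $a_2$. Unwinding Definition \ref{dp-rank}, it suffices to show that given any $k_1+k_2+1$ mutually $A$-indiscernible sequences $I_1,\dots,I_{k_1+k_2+1}$, at least one of them is indiscernible over $Aa_1a_2$.

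First I would apply Proposition \ref{strong dp-rank} with the tuple $a_1$ (whose type over $A$ has dp-rank $\le k_1$), with $B=A$, and with $\mathcal I=\{I_1,\dots,I_{k_1+k_2+1}\}$; since $k_1+k_2+1>k_1$, we obtain a sub-family $\mathcal I'\subseteq\mathcal I$ of size $(k_1+k_2+1)-k_1=k_2+1$ whose members are mutually indiscernible over $Aa_1$. Now set $B'=Aa_1$. Since $\tp(a_2/B')$ extends $\tp(a_2/A)$ and $B'\supseteq A$, the monotonicity of dp-rank under enlarging the base (item (iii) of the Remark following Definition \ref{dp-rank}) gives $\rkdp(\tp(a_2/B'))\le\rkdp(\tp(a_2/A))\le k_2$.

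Next, apply Proposition \ref{strong dp-rank} again, this time to $a_2$, with base $B'$ and with the $k_2+1$ mutually $B'$-indiscernible sequences in $\mathcal I'$; as $k_2+1>k_2$, we get a sub-family of size $(k_2+1)-k_2=1$, that is, a single sequence $I_j$ which is indiscernible over $B'a_2=Aa_1a_2$ (for a singleton family, mutual indiscernibility over a set is just indiscernibility over it). This is exactly what was needed, so $\rkdp(\tp(a_1a_2/A))\le k_1+k_2$, proving the theorem.

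The only slightly delicate points are bookkeeping: one must check that the sub-family $\mathcal I'$ produced by the first application is genuinely a family of mutually $B'$-indiscernible infinite sequences, so that Proposition \ref{strong dp-rank} applies with base $B'$, and one must invoke the monotonicity of dp-rank to pass from $A$ to $Aa_1$. There is no substantial further obstacle: all the combinatorial work has already been carried out in Proposition \ref{strong dp-rank} (hence in Lemma \ref{the claim2} and Proposition \ref{rephrase}). The same scheme, iterated, yields sub-additivity for finitely many tuples, and — as indicated in the excerpt — the infinite-rank case is handled separately by a softer argument.
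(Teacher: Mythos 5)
Your proof is correct and is essentially the paper's own argument: one application of Proposition \ref{strong dp-rank} to $a_1$ to extract $k_2+1$ sequences mutually indiscernible over $Aa_1$, followed by monotonicity of dp-rank and the rank bound for $\tp(a_2/Aa_1)$ (your second application of the proposition with $m=k_2+1$, $k=k_2$ is just the definition of dp-rank $\le k_2$ in that case). No gaps.
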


\begin{proof}
Let $\mathcal I:=\{I_1,\dots, I_{k_1+k_2+1}\}$ be mutually
$A$-indiscernible sequences. By Proposition \ref{strong dp-rank}
applied to $a_1, \mathcal I$, there is a subset $\mathcal I_1$ of
$\mathcal I$ of size $k_2+1$ of sequences which are mutually
indiscernible over $Aa_1$. By definition of dp-rank of
$\tp(a_2/Aa_1)$, we get that there is a sequence $I'\in \mathcal
I_1$ which is indiscernible over $Aa_1a_2$. By definition of
dp-rank, this completes the proof of the theorem.\end{proof}

We get the following corollary (compare with Corollary \ref{strong
dependence variables}).

\begin{corollary}
Let $T$ be any theory.

If all the one variable types have finite dp-rank, then every type
(with finitely many variables) in the theory has finite dp-rank.

If all the one variable types have dp-rank $\le k$, then every
type (with finitely many variables) $p(x)$ has dp-rank $\le
|x|\cdot k$.
\end{corollary}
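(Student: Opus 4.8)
The plan is to deduce both statements directly from the sub-additivity theorem (Theorem \ref{MainTheorem}) by induction on the number of variables. Let $p(\x)$ be a type over a set $A$, where $\x=(x_1,\dots,x_n)$ and each $x_i$ is a single variable; although the paper does not distinguish between singletons and tuples in notation, nothing prevents us from splitting $\x$ into its individual coordinates. Fix a realization $a=(a_1,\dots,a_n)\models p$. Since $\rkdp(p)=\sup\{\rkdp(\tp(a/A)):a\models p\}$, it is enough to bound $\rkdp(\tp(a/A))$ for each such $a$.

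First I would observe that each $\tp(a_i/A)$ is a complete one-variable type, so by hypothesis it has dp-rank $\le k$ in the bounded case, and finite dp-rank in the general case. Writing $a_1\dots a_n=(a_1\dots a_{n-1})^\frown a_n$ and applying Theorem \ref{MainTheorem} to the two tuples $a_1\dots a_{n-1}$ and $a_n$, a straightforward induction on $n$ yields $\rkdp(\tp(a_1\dots a_n/A))\le\sum_{i=1}^n\rkdp(\tp(a_i/A))$. In the bounded case this sum is at most $nk=|\x|\cdot k$, which is the second assertion. In the ``finite dp-rank'' case each summand is finite, hence so is the sum, which gives the first assertion for complete types $p$.

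The only point deserving a remark is the passage to partial types $p(\x)$, where one wants the bound on $\rkdp(\tp(a/A))$ to be uniform in $a\models p$. In the bounded case this is immediate, as the bound $|\x|\cdot k$ does not depend on $a$. In the general case one uses that only finitely many sorts occur among $x_1,\dots,x_n$ and that, applying the hypothesis to the (partial) one-variable type ``$x$ is of sort $s$'', the dp-rank of that sort is some finite $m_s$; then $\rkdp(\tp(a_i/A))\le m_{s_i}$ for every realization, so $\rkdp(p)\le\sum_i m_{s_i}<\omega$. I do not anticipate any genuine obstacle: the entire content sits in Theorem \ref{MainTheorem}, which has already been established, and the corollary is simply its iteration over the coordinates of $\x$.
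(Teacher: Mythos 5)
Your proof is correct and follows exactly the route the paper intends: the corollary is stated as an immediate consequence of Theorem \ref{MainTheorem}, obtained by splitting the tuple into single coordinates and iterating sub-additivity, which is precisely your induction. Your extra care about uniformity over realizations of a partial type (via the sort-types) is a reasonable tidying-up of a point the paper leaves implicit, not a departure from its argument.
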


The following follows immediately from Theorem \ref{MainTheorem}
and Proposition \ref{thm:rank_counting}.

\begin{corollary}
Let $T$ be any theory, and assume that all the one variable types
have dp-rank $\le k$. Then for every formula $\ph(x,y)$ we have
$\alt(\ph(x,y))\le 2k|x|+1$.

In particular, if $T$ is dp-minimal, then for every $\ph(x,y)$ we
have $\alt(\ph(x,y))\le 2|x|+1$.
\end{corollary}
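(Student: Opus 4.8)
The plan is to reduce the statement to Corollary \ref{cor:dprank_alt} by first bounding the dp-rank of multi-variable types via the main theorem. Write $n = |x|$, so $x = (x_1, \dots, x_n)$, and fix a formula $\ph(x,y)$.

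The first step is to show that every complete type $p(x)$ over $\0$ in the sort of $x$ satisfies $\rkdp(p) \le kn$. Given $c = (c_1, \dots, c_n) \models p$, each $\tp(c_i/\0)$ is a one-variable type, hence of dp-rank $\le k$ by hypothesis. Applying Theorem \ref{MainTheorem} repeatedly, with $A = \0$, splitting off one coordinate at a time (i.e.\ with $a_1 = (c_1, \dots, c_{n-1})$ and $a_2 = c_n$, then inducting on $n$), one obtains $\rkdp(\tp(c_1 \dots c_n/\0)) \le kn$. By Remark \ref{rem:doesnotdepend} the dp-rank does not depend on the base, so in fact every type in the sort of $x$, over any set, has dp-rank $\le kn$.

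The second step is to apply Corollary \ref{cor:dprank_alt} with $kn$ in place of $k$: since every type in the sort of $x$ over $\0$ has dp-rank $\le kn$, it follows that for every formula $\ph(x,y)$ we have $\alt(\ph(x,y)) \le 2kn + 1 = 2k|x| + 1$, which is the assertion. (One can also argue directly from Proposition \ref{thm:rank_counting} instead of quoting its corollary: if $\alt(\ph(x,y)) \ge 2kn + 2$, choose an indiscernible sequence $I$ in the sort of $y$ and $c$ in the sort of $x$ witnessing this; then, setting $p = \tp(c/\0)$, we get $\alt^{p(x)}_{\0}(\ph) \ge 2(kn+1)$, so by the implication (iii)$\implies$(i) of Proposition \ref{thm:rank_counting} we have $\rkdp(p) \ge kn+1$, contradicting the first step.) The final ``in particular'' clause is just the case $k = 1$, since $T$ being dp-minimal means precisely that every one-variable type has dp-rank $\le 1$.

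I do not anticipate a genuine obstacle: the proof is a bookkeeping combination of the two quoted results. The only points requiring a moment's attention are the iteration of Theorem \ref{MainTheorem} across the $n$ coordinates of $x$ (keeping track that each split-off coordinate contributes at most $k$) and the trivial but necessary observation that a witness to the ordinary alternation rank $\alt(\ph)$ is simultaneously a witness to $\alt^{p(x)}_{\0}(\ph)$ for the type $p$ of the witnessing element, so that Proposition \ref{thm:rank_counting} (equivalently Corollary \ref{cor:dprank_alt}) applies.
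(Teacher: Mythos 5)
Your proof is correct and is essentially the paper's intended argument: the paper derives this corollary "immediately" from Theorem \ref{MainTheorem} and Proposition \ref{thm:rank_counting}, and your two steps (iterating sub-additivity to bound every $|x|$-variable type by $k|x|$, then invoking the alternation-rank characterization as in Corollary \ref{cor:dprank_alt}) are exactly that derivation spelled out. The only cosmetic difference is that the paper's preceding corollary already records the bound $\rkdp(p)\le |x|\cdot k$, which you re-prove inline rather than cite.
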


\medskip

\subsection{The infinite case}


The proof of sub-additivity for the infinite case is in fact much easier than in the finite
case.

\begin{theorem}\label{infinite}
Let $\mathcal I$ be a set of $\kappa$ mutually indiscernible
sequences over $A$ and $a$ an element such that $S_{\kappa, 1}$
holds for $\mathcal I,a$. Then $S_{\kappa, \kappa}$ holds for
$\mathcal I,a$. In particular for any cardinal numbers $\kappa,
\lambda$ and any tuples $a,b$ we have that
\[
\rkdp(ab/A)\leq \max(\kappa, \lambda),
\]
whenever $\rkdp(a/A)\leq \kappa$ and $\rkdp(b/A)\leq \lambda$.
\end{theorem}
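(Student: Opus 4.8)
The plan is to prove the two-part statement of Theorem~\ref{infinite} as follows. The first assertion --- that $S_{\kappa,1}$ for $\mathcal I, a$ implies $S_{\kappa,\kappa}$ --- should follow by a counting/pigeonhole argument, exploiting that we are now in the infinite-cardinal regime where we have much more room than in the finite case. Given a set $B \supseteq A$ over which $\mathcal I$ remains mutually indiscernible, and any $\kappa$ sequences $\{I_j : j < \kappa\}$ from $\mathcal I$, we must find $\kappa$ of them that are mutually indiscernible over $Ba$. First I would observe that, by $S_{\kappa,1}$, we cannot have ``almost all'' sequences fail to stay indiscernible in the relevant sense; more precisely, I would like to show that the set of indices $j < \kappa$ such that $I_j$ is \emph{not} indiscernible over $B a \bigcup_{i \neq j} I_i$ has size $< \kappa$. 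If that is established, then removing those fewer-than-$\kappa$ bad indices leaves a subfamily of size $\kappa$ that is mutually indiscernible over $Ba$, which is exactly $S_{\kappa,\kappa}$.

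To prove that claim about the bad set, I would argue by contradiction: suppose $\kappa$-many indices $j$ are bad. Non-indiscernibility of $I_j$ over $B a \bigcup_{i\neq j}I_i$ is witnessed by a \emph{finite} tuple $\bar b_j$ drawn from $B a \bigcup_{i \neq j} I_i$; since each such tuple involves only finitely many of the other sequences, we can thin out to a subfamily of size $\kappa$ in which the witnessing parameters come from a ``small'' part of the configuration, and then apply Lemma~\ref{the claim2} (iteratively, or in its transfinite form, which the remark after its statement grants) to push all the witnessing parameters into $B$ itself --- that is, to extend $B$ to a set $B'$ over which $\mathcal I$ is still mutually indiscernible but $\kappa$-many of the $I_j$ fail to be indiscernible over $B' a$ individually. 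This directly contradicts $S_{\kappa,1}$ (which says that among any $\kappa$ --- indeed any $\kappa + 1 \le \kappa$ --- of the sequences, all but fewer than $1$, i.e. all of them after discarding none... more carefully: $S_{\kappa,1}$ guarantees in any $\kappa+1$-subfamily that $\kappa$ of them stay mutually indiscernible over $B'a$, so at most one is individually bad). The main subtlety is bookkeeping the transfinite iteration of Lemma~\ref{the claim2} so that extending $B$ for one bad index does not destroy the badness of the others; this is handled exactly as in the finite-case proof of Proposition~\ref{rephrase}, using that Lemma~\ref{the claim2} extends $B$ while preserving mutual indiscernibility of all of $\mathcal I$ and all previously-arranged non-indiscernibility facts.

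For the second assertion, I would deduce it from the first together with the definition of dp-rank and Corollary~\ref{average} / the characterization of $S_{k,n}$. Suppose $\rkdp(a/A) \le \kappa$ and $\rkdp(b/A) \le \lambda$, and set $\mu = \max(\kappa,\lambda)$. Let $\mathcal I = \{I_j : j < \mu\} \cup \{I^*\}$ be $\mu + 1$ mutually $A$-indiscernible sequences; we must find one that is indiscernible over $Aab$. Since $\rkdp(a/A) \le \kappa \le \mu$, the pair $\mathcal I, a$ satisfies $S_{\mu,1}$ (here I use that $S_{\kappa,1}$ upgrades to $S_{\mu,1}$ when $\mu \ge \kappa$, which is immediate from the definition since any $\mu+1$ sequences contain $\kappa+1$ sequences, of which $\kappa$ stay indiscernible, hence... actually one argues directly: among $\mu+1$ sequences, at most $\kappa$ can be bad over $Ba$ by applying the rank-$\kappa$ hypothesis to the relevant subfamilies --- this is precisely the content that $S_{\kappa,1}$ and monotonicity give). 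Then by the first part, $\mathcal I, a$ satisfies $S_{\mu,\mu}$, so there is a subfamily $\mathcal I_0 \subseteq \mathcal I$ of size $\mu$ that is mutually indiscernible over $Aa =: B$. Now $\rkdp(b/B) \le \rkdp(b/A) \le \lambda \le \mu = |\mathcal I_0|$, so by the definition of dp-rank applied over $B$ with the $\mu \ge \lambda+1$ sequences in $\mathcal I_0$ --- strictly, we need $|\mathcal I_0| > \lambda$, which holds as $\mu \ge \lambda$ unless $\mu = \lambda$; in the edge case $\mu = \lambda$ one instead takes $|\mathcal I| = \mu+1 > \lambda$ sequences through and applies $S_{\mu,\mu}$ to get $\mu \ge \lambda + 1$ survivors --- one of the sequences in $\mathcal I_0$ is indiscernible over $Bb = Aab$. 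That sequence witnesses $\rkdp(ab/A) \le \mu$, completing the proof.

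I expect the main obstacle to be the careful handling of the transfinite iteration of Lemma~\ref{the claim2} in the proof of the first assertion: one must simultaneously process $\kappa$-many ``bad'' indices, each time extending the base set, while ensuring that (a) mutual indiscernibility of the whole family $\mathcal I$ over the growing base is preserved, (b) each witnessing parameter $\bar b_j$ gets absorbed into the base without spoiling the already-absorbed ones, and (c) the process actually terminates / stabilizes after $\kappa$ steps with all $\kappa$ indices still individually bad over the final base. The finite-case blueprint in Proposition~\ref{rephrase} shows this is doable; the infinite version just requires replacing the finite induction by a transfinite one, using that at limit stages the union of the bases still keeps $\mathcal I$ mutually indiscernible (as a finite violation would already appear at some earlier stage). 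A secondary, purely notational, obstacle is reconciling the ``$\kappa+1$ sequences'' convention in Definition~\ref{dp-rank} with the ``$\ge k+n$'' convention in the definition of $S_{k,n}$ for infinite cardinals, where $\kappa + 1 = \kappa$; I would simply phrase everything in terms of discarding a subfamily of size $< \kappa$ from a family of size $\kappa$, which sidesteps the ``$+1$'' entirely.
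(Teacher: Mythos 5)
Your reduction of the second assertion to the first is fine and matches the paper's (``the rest follows exactly as in Theorem \ref{MainTheorem}''). The problem is in your proof of the first assertion. You aim to show that the set of ``bad'' indices $j$ (those with $I_j$ not indiscernible over $Ba\bigcup_{i\neq j}I_i$) has size $<\kappa$, and for this you want to absorb the finite witnesses $\bar b_j$ into the base via Lemma \ref{the claim2}. But Lemma \ref{the claim2} has a hypothesis you never verify: the subfamily $\mathcal J$ containing the witnessing parameters must be mutually indiscernible over $AIa$ --- over the base together with the bad sequence \emph{and} $a$. This is exactly what makes the automorphism step in that lemma work (one needs $\tp(\bar b/AIa)=\tp(\bar b'/AIa)$ in order to transport the witness into the continuations without destroying the non-indiscernibility of $I$ over the new base together with $a$), and it is not automatic. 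In the finite case (Proposition \ref{rephrase}) this hypothesis is manufactured by the induction on $n$: one first applies $S_{k,n}$ over the base $BI_i$ to produce a subfamily $\mathcal I_i$ mutually indiscernible over $BI_ia$, and only then feeds $\mathcal J=\mathcal I_i$ into Lemma \ref{the claim2}. Your direct $S_{\kappa,1}\Rightarrow S_{\kappa,\kappa}$ argument has no analogous source for it: $S_{\kappa,1}$ only hands you a single good sequence over $BI_ja$, and asking for a $\kappa$-sized good subfamily over $BI_ja$ is circular. Note also that the claim you are trying to prove --- fewer than $\kappa$ indices are bad relative to the \emph{entire} complement --- is strictly stronger than $S_{\kappa,\kappa}$ (which only asserts existence of \emph{some} good $\kappa$-subfamily), and it is not clear it is even true. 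A secondary slip: your parenthetical gloss of $S_{\kappa,1}$ (``$\kappa$ of any $\kappa+1$ stay mutually indiscernible, so at most one is bad'') is the statement of $S_{1,\kappa}$; $S_{\kappa,1}$ only guarantees that at least \emph{one} of any $\kappa+1=\kappa$ sequences survives --- though that correct, weaker reading does still yield the contradiction you want at that point.

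The paper avoids all of this machinery with a one-line trick you are missing: since $\kappa\cdot\kappa=\kappa$, partition $\mathcal I$ into $\kappa$ blocks $\mathcal I^\alpha$, each of size $\kappa$. For each $\beta$, the block $\mathcal I^\beta$ is mutually indiscernible over $A\cup\bigcup_{\alpha\neq\beta}\mathcal I^\alpha$, so the rank hypothesis (applied over that base, using $\kappa+1=\kappa$) yields one $I^\beta\in\mathcal I^\beta$ indiscernible over $A\cup\bigcup_{\alpha\neq\beta}\mathcal I^\alpha\cup\{a\}$; the chosen family $\{I^\beta\}_{\beta<\kappa}$ is then automatically mutually indiscernible over $Aa$. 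No extension of sequences, no Lemma \ref{the claim2}, no transfinite bookkeeping.
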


\begin{proof}
Let $\mathcal I, a$ be any pair satisfying $S_{\kappa, 1}$. We can
partition $\mathcal I=\bigcup_{\alpha\in \kappa} \mathcal
I^\alpha$ into a disjoint union of $\kappa$ many sets of $\kappa$
many sequences. By hypothesis we know that for any $\beta$ the set
$\mathcal I^\beta $ are mutually indiscernible over $A\cup
\bigcup_{\alpha\neq \beta} \mathcal I^\alpha$ so by assumption we
have that some sequence $I^\beta$ in $\mathcal I^\beta$ is
indiscernible over $A\cup \bigcup_{\alpha\neq \beta}\mathcal
I^\alpha \cup \{a\}$. Doing this for any $\beta\in \kappa$ we get
a set of sequences $\{I^\alpha\}_{\alpha\in \kappa}$ such that
$I^\beta$ is indiscernible over $A\cup \bigcup_{\alpha\neq \beta}
I_\alpha \cup \{a\}$ which by definition proves that $\mathcal I,
a$ satisfies $S_{\kappa, \kappa}$. The rest of the proof follows
exactly as in Theorem \ref{MainTheorem}.
\end{proof}

Since this immediately implies that $\rkdp(ab/A)\leq \omega$
whenever $\rkdp(a/A)\leq \omega$ and $\rkdp(b/A)\leq \omega$, this
theorem provides a very easy proof of the fact that strong
dependence ($\rkdp (p(x))\leq \omega$ for all $p(x)$ or,
equivalently, no randomness pattern of depth $\omega$) only needs
to be verified in one variable. Summarizing, we get the following
(very) easy corollary, which was originally proved by Shelah in
\cite{Sh863} (Observation 1.6).

\begin{corollary}\label{strong
dependence variables} A theory $T$ is strongly dependent if and
only if all the one variable types in $T$ are strongly dependent.
\end{corollary}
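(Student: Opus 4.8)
The plan is to deduce Corollary \ref{strong dependence variables} from Theorem \ref{infinite} together with Theorem \ref{MainTheorem}. One direction is immediate: if a theory $T$ is strongly dependent, meaning $\rkdp(p)\le\om$ for every type $p$, then in particular every one-variable type has dp-rank $\le\om$, so the one-variable types are strongly dependent.

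For the converse, suppose every one-variable type in $T$ is strongly dependent, i.e.\ has dp-rank $\le\om$. I would first record the easy fact, already implicit in the discussion after Theorem \ref{infinite}, that if $\rkdp(a/A)\le\om$ and $\rkdp(b/A)\le\om$ then $\rkdp(ab/A)\le\max(\om,\om)=\om$; this is exactly the ``in particular'' clause of Theorem \ref{infinite} applied with $\kappa=\lambda=\om$. Then I would proceed by induction on the length $n$ of the tuple $x$. The base case $n=1$ is the hypothesis. For the inductive step, given a type $p(x_1\dots x_n)$ over a set $A$, pick a realization $a_1\dots a_n$, write $a=a_1\dots a_{n-1}$ and $b=a_n$; by the induction hypothesis $\rkdp(\tp(a/A))\le\om$, and by the hypothesis applied in the appropriate sort, $\rkdp(\tp(b/A))\le\om$; hence by the above, $\rkdp(\tp(ab/A))\le\om$, and since $p=\tp(ab/A)$ this gives $\rkdp(p)\le\om$. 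Thus every finite-variable type has dp-rank $\le\om$.

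The only remaining point is to pass from ``every type with finitely many variables'' to ``every type'': a type $p(x)$ with $x$ an infinite tuple of variables has a randomness pattern of depth $\om$ if and only if some finite subtuple does — indeed, a randomness pattern of depth $\om$ for $p$ involves only countably many formulas $\ph_\al(x,y_\al)$, each using only finitely many of the variables in $x$, so all of them together use only countably many, and by compactness (and Lemma \ref{lem:randomness}) one can transfer such a pattern to a finite subtuple. Hence if every finite-variable type is strongly dependent, so is every type, and $T$ is strongly dependent. I do not expect any genuine obstacle here; the content is entirely in Theorem \ref{infinite}, and this corollary is just the bookkeeping that turns the two-variable statement into an arbitrary-variable statement via induction and a compactness/finite-support argument.
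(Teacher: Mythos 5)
Your proposal is correct and follows essentially the same route as the paper: the paper obtains this corollary immediately from the ``in particular'' clause of Theorem \ref{infinite} with $\kappa=\lambda=\omega$, the induction on the number of variables being left implicit. Your additional remarks on passing from finite to infinite tuples of variables are just bookkeeping and do not change the argument.
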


Also, we get a dp-rank version of Shelah's theorem that $T$ is
dependent if and only if the independence property cannot be
witnessed by $\phi(x,y)$ with $|x| = 1$. Recall that a theory is
dependent if and only if every type is dependent, that is,
$\rkdp(p(x))\leq |T|^+$ for any type $p(x)$\footnote{We are
defining a type to be dependent depending of the behavior of the
``dual'' formula. Reasons why this is the right notion can be
found in Observation 2.7 in \cite{OnUs}.}.
So the following, which follows immediately from Theorem
\ref{infinite}, is a new (and simpler) proof of Shelah's Theorem
II.4.11 in \cite{Shelahbook}.

\nocite{Sh715}

\nocite{Sh783}

\begin{corollary}
A theory $T$ is dependent (which is equivalent to every type being
dependent) if and only if all the one variable types in $T$ are
dependent.
\end{corollary}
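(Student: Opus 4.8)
The plan is to deduce this from Theorem \ref{infinite}, exactly as Corollary \ref{strong dependence variables} is deduced from it, only working with the cardinal $\kappa = |T|^+$ in place of $\omega$. Recall (Remark in Definition \ref{dp-rank}, together with item (ii) of the Remark following it) that a type $p$ is dependent precisely when $\rkdp(p) \le |T|^+$, and that $T$ is dependent iff every type is dependent. So the content of the corollary is: if $\rkdp(p(x)) \le |T|^+$ for every one-variable type $p(x)$, then the same bound holds for every type in finitely many variables; the converse is trivial.

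First I would reduce to finitely many variables. Since non-dependence of a theory (the independence property) can be witnessed by a single formula $\phi(x,y)$, and hence by a single type $p(x)$ with $x$ a finite tuple, it suffices to prove: if all one-variable types are dependent, then every type in $n$ variables is dependent, for each finite $n$. This follows by an immediate induction on $n$ once we handle the case $n=2$, i.e. once we know that $\rkdp(\tp(a_1 a_2 / A)) \le |T|^+$ whenever both $\rkdp(\tp(a_i/A)) \le |T|^+$.

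Now apply Theorem \ref{infinite} with $\kappa = \lambda = |T|^+$: it gives $\rkdp(\tp(a_1 a_2/A)) \le \max(|T|^+, |T|^+) = |T|^+$, which is exactly dependence of $\tp(a_1 a_2/A)$. Chaining this $n-1$ times (absorbing one coordinate at a time into a finite tuple, whose one-variable-type components all remain dependent since dependence is inherited by the relevant coordinate projections) yields that every $n$-variable type over any set $A$ is dependent. Combined with the reduction of the previous paragraph and the fact that the converse direction is immediate (a one-variable type is a special case of a type), this proves the corollary.

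The only point requiring a moment's care — and thus the mildest obstacle — is the bookkeeping in the reduction to finite tuples: one must be sure that "all one-variable types are dependent" really does give, for a tuple $a = (a_1,\dots,a_n)$, that $\tp(a_i/A)$ is dependent for each $i$ (so that Theorem \ref{infinite} can be applied to split off $a_n$), and then that the inductive hypothesis applies to the shorter tuple $(a_1,\dots,a_{n-1})$ over the same base $A$. This is routine since $\tp(a_i/A)$ is literally a one-variable type over $A$, and the induction is on tuple length with the base set fixed; no finer argument is needed.
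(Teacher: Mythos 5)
Your proposal is correct and follows essentially the same route as the paper: the authors likewise observe that dependence of a type is the bound $\rkdp(p)\le |T|^+$ and state that the corollary ``follows immediately from Theorem \ref{infinite}'' applied with $\kappa=\lambda=|T|^+$, exactly as you do. Your additional bookkeeping (reduction to finite tuples and the induction on tuple length) is the routine unwinding the paper leaves implicit, so there is nothing further to add.
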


\section{VC-density}\label{3}

Recent results by Aschenbrenner, Dolich, Haskell, Macpherson and
Starchenko \cite{ADHMS}, show that in many of the well behaved
dependent theories, the $VC$-density can be calculated and, in
many of the cases that they considered, it is linear. In this
section we define $VC$-density (or rather, a dual notion, which we
call $VC^*$-density\footnote{Given a family $\Delta(x,y)$, the
$VC^*_\Delta$-density will correspond to the $VC$-density of the
family $\Delta$ after we invert the roles of $x$ and $y$.}), and
discuss some connections between it and dp-rank.


%


For the sake of clarity, we introduce a few notations. Let $p(x)$
be a type, and $B$ a set; we denote the set of all complete types
over $B$ which are consistent with $p(x)$ by $S^{p(x)}(B)$. If
$\varphi(x,y)$ is a formula, or $\Delta(x,y)$ is a set of
formulas, we can speak of $\varphi(x,y)$-types, or
$\Delta(x,y)$-types (over $B$) consistent with $p(x)$; these will
be denoted by $S_\varphi^{p(x)}$ and $S_\Delta^{p(x)}$,
respectively.

We also remind the reader of some basic notation for asymptotic
behavior of functions. Let $f,g \colon \setN \to \setR_+$. We
write that $f=O(g)$ if for some constant $r
> 0$ for any $n$ large enough we have $f(n)\le r\cdot g(n)$.

Recall that if $f$ is a polynomial function, its order of magnitude
is completely determined by its degree. That is, if $f,g$ are
polynomials, then $f=O(g)$ if and only if $\deg(f)\le\deg(g)$.

\smallskip

\begin{definition}\label{VCdimension}
Let $\mathcal C$ be a large, saturated enough, model of $T$, let
$\Delta(x,y)$ be a finite set of formulas in the language of $T$,
and let $p(y)$ be a (partial) type over a set of parameters in
$\mathcal C$. The \emph{$VC^*$-dimension} of $p(y)$ with respect
to $\Delta$ is greater than or equal to $n$ if there is a set $A$
of size $n$ such that, for any $A_0\subset A$ there is some
$b\models p(y)$ and some $\delta(x,y)\in \Delta$ such that for any
$a'\in A$,  we have
\[
\mathcal C\models \delta(a',b) \Leftrightarrow a'\in A_0.
\]

Whenever this happens we will say that $\Delta$ \emph{shatters} $A$
with realizations of $p(y)$.
\medskip

We will say that the \emph{$VC^*$-dimension of $p(y)$ with respect
to $\Delta$ is $n$} if the $VC^*$-dimension is greater than or
equal to $n$, but not greater than or equal to $n+1$.
\end{definition}

Notice that if $\Delta$ shatters a set $A$ with respect to $p(y)\in
S_k(B)$, then every subset of $A$ is (externally) definable as

\[
\delta(\mathcal C, b)\cap A
\]
where $b$ varies among realizations of $p(y)$ (and
$\delta\in\Delta$). If $\Delta$ is a singleton (this is, if there
is a single formula $\delta(x,y)$ in $\Delta$) this is of course
equivalent to saying that $p(y)$ is consistent with $2^{|A|}$
different $\Delta$-types over $A$. So, if instead of counting
subsets we count types, we will get a notion that, although it is
not exactly the same as $VC^*$-dimension when $\Delta$ is not a
singleton, it is closely related to this notion (particularly
asymptotically). Recall that by $S_\Delta^{p(y)}(A)$ we denote the
set of all $\Delta$-types over $A$ consistent with $p(y)$. With
this notation, we can look for the largest $n$ such that there is
some set $A$ of size $n$ such that
\[
|S_\Delta^{p(y)}(A)|\ge 2^{|A|}.
\]

\smallskip

We are slowly getting to the notion of $VC$-density that we will
work with. It was proved (apparently independently by Sauer,
Shelah, and Vapnik-Chervonenkis) that if the $VC^*$-dimension of
$\Delta$ with respect to $p(y)$ is equal to $d$, then, for any $A$
of size greater than $d$, the number $\pi_{(\Delta, p)}(A)$ of
subsets of $A$ externally definable with realizations of $p$,
satisfies
\[
\pi_{\Delta, p}(A)\leq \left.|A|\choose
0\right.+\dots+\left.|A|\choose d\right. . \]

Now, $|S_\Delta^{p(y)}(A)|$ is bounded by $|\Delta|$ times
$\pi_{\Delta, p}(A)$. This implies that when the $VC^*$-dimension of
$\Delta$ with respect to $p$ is $d$, then each formula in $\Delta$
can define (using parameters in $p$), at most $O(|A|^d)$ externally
definable sets of $A$. It follows that if we vary $A$ among
increasing subsets of an infinite set, we get polynomial growth of
the number of $\Delta$-types $|S_\Delta^{p(y)}(A)|$ over $A$, and a
very natural question to ask is whether $d$ is the best bound on the
degree of the polynomial. This prompts the following definition of
$VC^*$-density of a type. We will define (adapting the notions in
\cite{ADHMS}) the $VC_\Delta^*$-density of a type $p(y)$ over a set
$C$ to be
\[
\inf\{r\in \mathbb R^{\geq 0} \mid
|S_\Delta^{p(y)}(A)|=O(|A|^r)\text{ for all finite $A\subseteq
C^{|y|}$}\}.
\]

What we formally mean by this is that there exists a
function $f \colon \setN \to \setR_+$ such that $f=O(n^r)$, and
$\card{\tS^{p(y)}_\Delta(A)}\le f(|A|)$ for all $A \subseteq C$
finite.

If in the definition above $A$ is allowed to range over all finite
sets (that is, $C = M$ for some $M\models T$ saturated enough), we
omit ``over $C$'', and simply say ``$VC^*_\Delta$-density of $p$''.

\medskip

Using our present notation, Proposition \ref{thm:rank_counting}
implies that if $\rkdp(p(y))\ge k$, then there exists a formula
$\ph(x,y)$ and an indiscernible sequence $I$ in the sort of $y$,
such that $|S^{p(y)}_\ph(I')|\ge {n \choose k}$ for every $I'
\subseteq I$ of size $n\ge k$. This of course means that
$VC^*_\ph$-density of $p(y)$ is at least $k$. In order to make
this connection between dp-rank and $VC^*$-density more precise,
we state the following proposition.

Recall (Proposition \ref{prp:equivalence}) that $\rkdp(p) \ge k$ if
and only if there is a randomness pattern $I_\al, \ph_\al$ of depth
$k$ for $p$. Below we will say that $\rkdp(p) \ge k$ is
\emph{witnessed by formulas in $\Delta$} (where $\Delta$ is a set of
formulas) if all $\ph_\al$ are in $\Delta$.

\begin{proposition}\label{VCdensity_dprank}
    Let $p(y)$ be a type over $A$ and $\Delta$ be a set of formulas
    which is closed under boolean combinations. Then the following are
    equivalent.

    \begin{enumerate}
    \item $\rkdp(p)\geq k$, witnessed by formulas in $\Delta$.


    \item There is an $A$-indiscernible sequence $I$ and some formula
    $\phi(x,y)\in \Delta$ such that $p(x)$ has $VC_\ph^*$-density at least $k$
    over $I$.

    \item There is an $A$-indiscernible sequence $I$ and some formula
    $\phi(x,y)\in \Delta$ such that $p(x)$ has $VC^*$-density bigger than $k-1$
    with respect to $\phi(x,y)$ over $I$.
    \end{enumerate}

    %
\end{proposition}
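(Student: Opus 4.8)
The plan is to prove the cyclic chain of implications (i) $\Rightarrow$ (ii) $\Rightarrow$ (iii) $\Rightarrow$ (i), leaning on Proposition \ref{thm:rank_counting} and the discussion immediately preceding the statement. The implication (ii) $\Rightarrow$ (iii) should be essentially free: recall that $f = O(n^k)$ iff $\deg \le k$ for polynomials, but more to the point, $VC^*_\ph$-density $\ge k$ means $|S^{p(y)}_\ph(A)|$ is not $O(|A|^r)$ for any $r < k$, which is exactly the assertion that the $VC^*$-density with respect to $\ph$ over $I$ is (strictly) bigger than $k-1$. So the content is in the other two implications.

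For (i) $\Rightarrow$ (ii), I would start from a randomness pattern $(I_\al, \ph_\al)_{\al < k}$ of depth $k$ for $p$ with all $\ph_\al \in \Delta$, and run the construction from the proof of (i) $\Rightarrow$ (ii) of Proposition \ref{thm:rank_counting}: form the single $A$-indiscernible sequence $I = \seq{a_j : j < \om}$ with $a_j = a^1_j \dots a^k_j$, and the formula $\ph(x, y_1 \dots y_k) = \bigvee_{\al} \ph_\al(x, y_\al)$. Since $\Delta$ is closed under boolean combinations, $\ph \in \Delta$. The key point established there is that for any $k$ distinct indices $j_1, \dots, j_k$ there is $c \models p$ with $\ph(c, a_j)$ iff $j \in \{j_1, \dots, j_k\}$; hence every $k$-element subset $I' \subseteq I$ of size $k$ is of the form $\ph(c, y) \cap I$ for some $c \models p$. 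Now I would count: for a finite $I_0 \subseteq I$ with $|I_0| = n \ge k$, the $\binom{n}{k}$ many $k$-subsets of $I_0$ are each cut out by some $c \models p$, and distinct $k$-subsets give distinct $\ph$-types over $I_0$; so $|S^{p(y)}_\ph(I_0)| \ge \binom{n}{k}$, which grows like $n^k$ and therefore is not $O(n^r)$ for $r < k$. Thus $VC^*_\ph$-density of $p$ over $I$ is at least $k$. (A small caveat: the statement has $p(y)$ in (i) but $p(x)$ in (ii)/(iii); I will treat this as a typo and keep the variable consistent, realizing the type by elements $c$ playing the role of the ``$x$'' side of $\ph(x,y)$.)

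For (iii) $\Rightarrow$ (i), suppose there is an $A$-indiscernible sequence $I$ and $\ph \in \Delta$ such that $VC^*$-density of $p$ with respect to $\ph$ over $I$ is $> k - 1$. Then $|S^{p(y)}_\ph(I_0)|$ is not $O(|I_0|^{k-1})$ as $I_0$ ranges over finite subsets of $I$; in particular, for arbitrarily large $n$ there are finite $I_0 \subseteq I$, $|I_0| = n$, with $|S^{p(y)}_\ph(I_0)|$ exceeding any fixed polynomial of degree $k-1$ in $n$. By the Sauer--Shelah bound recalled before the proposition (applied to the single formula $\ph$, so $|\Delta| = 1$ there), if the $VC^*$-dimension of $\ph$ with respect to $p$ over $I$ were some $d$, then $|S^{p(y)}_\ph(I_0)| \le \binom{n}{0} + \dots + \binom{n}{d} = O(n^d)$; hence $d \ge k$, i.e.\ $\ph$ shatters some subset of $I$ of size $k$ with realizations of $p$. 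Shattering a $k$-subset of an $A$-indiscernible sequence means: there are $a_1, \dots, a_k \in I$ such that for every $I' \subseteq \{a_1, \dots, a_k\}$ there is $c \models p$ with $\ph(c, y) \cap \{a_1, \dots, a_k\} = I'$. In particular every $j$-th ``coordinate pattern'' can be picked, which is a strong form of condition (ii) of Proposition \ref{thm:rank_counting} witnessed by $\ph$; applying that proposition (tracking that the witnessing formula is $\ph \in \Delta$) gives $\rkdp(p) \ge k$ witnessed by formulas in $\Delta$. Here I need to be a little careful that Proposition \ref{thm:rank_counting}(ii) wants a single $A$-indiscernible sequence with every size-$k$ subset $\ph$-definable, not just one particular $k$-element set; by $A$-indiscernibility of $I$ and compactness, the existence of one shattered $k$-tuple inside $I$ upgrades (via automorphisms and a Ramsey/compactness argument, exactly as in Lemma \ref{lem:randomness}) to a full randomness pattern of depth $k$ with formulas from $\Delta$.

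The main obstacle I expect is the bookkeeping in (iii) $\Rightarrow$ (i): passing from ``the counting function is not $O(n^{k-1})$'' to ``there is an actual shattered $k$-set'' via Sauer--Shelah, and then from a single shattered $k$-set inside one indiscernible sequence to an honest depth-$k$ randomness pattern whose formulas lie in $\Delta$. The closure of $\Delta$ under boolean combinations is what lets us stay inside $\Delta$ when we form the disjunction $\bigvee_\al \ph_\al$ in the (i) $\Rightarrow$ (ii) direction and when we negate $\ph$ in the alternation/extraction arguments; I will make sure to invoke it explicitly at each such point. Everything else is a routine translation between the ``counting types'' language and the ``randomness pattern'' language already carried out in Proposition \ref{thm:rank_counting} and the paragraph preceding the present proposition.
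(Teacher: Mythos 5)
Your directions (i) $\Rightarrow$ (ii) and (ii) $\Rightarrow$ (iii) are correct and are exactly what the paper does: the first is the counting consequence of Proposition \ref{thm:rank_counting}(ii) spelled out in the paragraph before the statement (with closure of $\Delta$ under boolean combinations keeping the disjunction inside $\Delta$), and the second is immediate from the definition of density as an infimum. The gap is in (iii) $\Rightarrow$ (i). Sauer--Shelah does legitimately extract, from ``the trace count is not $O(n^{k-1})$'', a $k$-element subset of $I$ shattered by $\ph$ with realizations of $p$ (and by $A$-indiscernibility every $k$-subset of $I$ is then shattered). But this is \emph{weaker}, not stronger, than condition (ii) of Proposition \ref{thm:rank_counting}: shattering only controls the trace $\ph(c,y)\cap\{a_1,\dots,a_k\}$ on those $k$ points, whereas \ref{thm:rank_counting}(ii) requires a $c\models p$ whose trace on \emph{all of} $I$ is exactly the chosen $k$-set. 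No automorphism/Ramsey/compactness upgrade can bridge this, because the implication is simply false. Take $T=$ DLO, $p(y)$ the unique non-algebraic $1$-type over $\emptyset$ (dp-minimal), $I=\langle (a_i,b_i)\rangle_{i<\omega}$ the indiscernible sequence of pairs with $a_1<a_2<\cdots<b_1<b_2<\cdots$, and $\ph(x_1x_2,y)$ the formula $x_1<y<x_2$. For $i<j$ the four intervals determined by $a_i<a_j<b_i<b_j$ realize all four traces $\emptyset,\{i\},\{i,j\},\{j\}$, so \emph{every} pair from $I$ is shattered by $\ph$ with realizations of $p$; yet $\rkdp(p)=1$. (This does not contradict the proposition: the $2n$ endpoints of an $n$-subset of $I$ cut the line into $O(n)$ intervals, so the trace count is linear --- precisely the quantitative information your reduction to a single shattered set discards.) A shattered $k$-set yields an independence (IP-type) configuration, not a randomness pattern, and these are genuinely different objects.

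This is exactly why the paper's proof of (iii) $\Rightarrow$ (i) works harder: it takes $I$ of order type $\setR$, defines the \emph{switch points} of a realization $c$ along $I$, and shows by a direct count (a $\ph$-type over a finite $I'$ is determined by its at most $k-1$ switch cuts among the $2|I'|+1$ cuts together with boundedly many signs, which would force the count to be $O(|I'|^{k-1})$) that some $c\models p$ must have at least $k$ switch points. It then passes to a sequence of pairs from $I$ and to $\ps(x_1,x_2;y)=\ph(x_1,y)\leftrightarrow\ph(x_2,y)$ (still in $\Delta$ by closure under boolean combinations), converting the $k$ switch points into $2k$ alternations, and concludes via \ref{thm:rank_counting}(iii) $\Rightarrow$ (i). To repair your argument you would need to retain the superpolynomial growth of the trace count all the way to the construction of the randomness pattern, rather than collapsing it to the existence of one shattered $k$-set; that retention is what the switch-point argument accomplishes.
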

\begin{proof}
    (i) $\implies$ (ii) by Proposition \ref{thm:rank_counting}, as explained above
    (note that the formula one gets in Proposition \ref{thm:rank_counting} is a boolean combination of
    $\Delta$-formulas, hence is itself in $\Delta$), and (ii) $\implies$ (iii) is trivial.

    (iii ) $\implies$ (i).
    %
    %
    %

    Let $f\colon \setN\to\setN$ be the following function: $f(n)=\card{S^{p(y)}_\ph(I')}$ for some/every $I' \subseteq I$
    of size $n$. It follows from the assumption that $f$ is \emph{not} $O(n^{k-1})$. This means that for
    every $r>0$ there is $n$ such that $f(n)>r\cdot n^{k-1}$.






    We may assume that the order type of $I$ is $\setR$ (indeed, all we need
    is to keep $\card{S^{p(y)}_\ph(I')}$ for all $I' \subseteq I$ finite). We write $I=\inseq{a}{r}{\setR}$.

In order to continue, we will need the following definition. Given
some $c\models p$, an element $r\in \mathbb R$ will be defined to be
a ``switch point in $I$ for $c$''
if there is
some $\epsilon\in \mathbb R$ such that either
\[
\phi(c, a_{r-\delta})\Leftrightarrow \neg \phi(c, a_{r})
\]
for all $\delta<\epsilon$,
{or}
\[
\phi(c, a_{r+\delta})\Leftrightarrow \neg \phi(c, a_{r})
\]
for all $\delta<\epsilon$.

\medskip


\begin{clm}
    There is some $c\models p$ for which
    there are at least $k$ switch points in $I$.
\end{clm}


\noindent  \emph{Proof of the claim:}  Assume this is not the case.
Let $I' \subseteq I$ finite, denote $n =: |I'|$. Let $q \in
S^{p(y)}_\ph(I')$, $c\models q$. Below we refer to a cut of $I'$
(by which we mean either an element of $I'$ or an interval between
two adjacent elements in $I'$) as a ``switch cut for $c$ in
$I'$'', if the interval it induces in $I$ contains a switch point
of $c$ in $I$. Note that there are $2n+1$ cuts in $I'$, hence at
most ${2n+1 \choose {k-1}}$ choices for possible sequences of
switch cuts (by the assumption towards contradiction).

Notice that $q$ is completely determined by knowing (1) the switch
cuts for $c$ in $I'$, (2) what are the signs of $\phi(c,y)$ of
different segments \emph{between} the switch cuts of $c$ in $I'$,
and (3) the sings of the cuts themselves. Since there are at most
${2n+1 \choose {k-1}}$ choices for possible sequences of switch
cuts, at most $2^{k}$ possible sequences of signs on the segments
between the switch cuts, and at most $2^{k-1}$ possible signs for
the switch cuts, we would get that
$\card{\tS^{p(y)}_\ph(I')}=O(|I'|^{k-1})$ for all $I'$, contrary
to the assumption. $\square$

\medskip

Let $c\models p$ be such that there are at least $k$ switch points
in $c$. We can then choose increasing indices $q_0, \ldots,
q_{4k-1}$ in $\setQ$ such that for all $i<k$ the following hold:
\begin{itemize}
        \item The closed $\mathbb R$-interval $[q_{4i}, q_{4i+1}]$ contains a switch point, and  $\models
        \ph(a_{q_{4i}},c)\leftrightarrow\neg\ph(a_{q_{4i+1}},c)$, and
        \item the open $\mathbb R$-interval $(q_{4i+1}, q_{4(i+1)})$ does not contain a switch point so that in particular
        $\models \ph(a_{q_{4i+2}},c)\leftrightarrow\ph(a_{q_{4i+3}},c)$.
\end{itemize}
Now, let $J$ be the sequences of pairs of $I$
\[J=\seq{a_{q_{2i}},a_{q_{2i+1}} \colon i<2k},
\]
and let $\ps(x_1,x_2;y)$ be the formula
$\ph(x_1,y)\leftrightarrow\ph(x_2,y)$.

Clearly, $J$ is an $A$-indiscernible sequences and since $\Delta$
is closed under boolean combinations, $\ps(x_1, x_2, y)\in
\Delta$. Finally, by construction $alt^{p(y)}_A(\ps)$ is at least
$2k$ witnessed by $J$, so by Proposition \ref{thm:rank_counting},
we have $\rkdp(p) \ge k$, witnessed by $\ps \in \Delta$, as
required.
\end{proof}

\begin{rmk}
    Notice that we needed to define switch points, of which at first there may seem to
    be as many as the alternation rank. But there is the subtle
    issue that ``isolated points'' only count as one switch point,
    even though they contribute to two for the alternation rank. In
    fact, by changing the sequence and the formula, we manage to
    ensure that all those ``switch points'' happen on ``isolated'' points (now pairs),
    each of which then contributes two alternations, hence obtaining alternation rank $2k$.
\end{rmk}

Proposition \ref{VCdensity_dprank} explains why in the example of
non-integer $VC$-density presented in \cite{ADHMS} one has to work
over sets that are not indiscernible, and why over indiscernible
sequences $VC$-density becomes an integer: in this case,
$VC$-density simply equals the appropriate dp-rank.

\bigskip

We now combine Propositions \ref{prp:equivalence},
\ref{thm:rank_counting}, and \ref{VCdensity_dprank}, and summarize
all the main characterizations of finite dp-rank that we have
shown in this article.

\begin{theorem}\label{summary}
    The following are equivalent for a type $p(y)$ over a set $A$:
    \begin{enumerate}
        \item $\rkdp(p)\ge k$.
        \item There is a randomness pattern of depth $k$ for $p(x)$
        over $A$.
        \item There is a formula $\ph(x,y)$ and an $A$-indiscernible sequence $I$ in the sort of $x$ such that
        the $VC^*_\ph$-density of $p$ over $I$ is at least $k$.
        \item There is a formula $\ph(x,y)$ and an $A$-indiscernible sequence $I$ in the sort of $x$ such that
        the $VC^*_\ph$-density of $p$ over $I$ is bigger than
        $k-1$.
        \item There is a formula $\ph(x,y)$ and an $A$-indiscernible sequence $I$ in the sort of $x$ such that
        for every $I' \subseteq I$ of size $k$ there exists $c \models p$ satisfying $\ph(x,c)\cap I = I'$
        (that is, \emph{every} subset of $I$ of size $k$ is externally $\ph(x,y)$-definable by a realization of
        $p$).
        \item There is a formula $\ph(x,y)$ with $\alt^{p(y)}_A(\ph)\ge
        2k$.
    \end{enumerate}
\end{theorem}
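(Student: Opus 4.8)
The plan is to derive Theorem \ref{summary} purely by assembling the equivalences already established in Sections \ref{1.5} and \ref{3}; no new argument is needed, only a careful identification of the variables appearing in the various statements. Concretely, I would split the six conditions into three overlapping blocks, each handled by one earlier proposition: conditions (i) and (ii) by Proposition \ref{prp:equivalence}, conditions (i), (v), (vi) by Proposition \ref{thm:rank_counting}, and conditions (i), (iii), (iv) by Proposition \ref{VCdensity_dprank}. Since condition (i) lies in every block, chaining these together yields the full equivalence.

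First I would observe that (i) $\iff$ (ii) is literally the equivalence of clauses (i) and (ii) of Proposition \ref{prp:equivalence}, applied after swapping the names of the free variable of $p$ and the parameter variable (Proposition \ref{prp:equivalence} is stated for $p(x)$, whereas here the type is $p(y)$). Next, (i) $\iff$ (v) $\iff$ (vi): these are exactly clauses (i), (ii), (iii) of Proposition \ref{thm:rank_counting} once one matches the notation --- clause (v) above is clause (ii) there (``every $k$-element subset of the indiscernible sequence is externally $\ph$-definable by a realization of $p$'') and clause (vi) above is clause (iii) there ($\alt^{p(y)}_A(\ph) \ge 2k$). Again the only discrepancy is the cosmetic one of whether the type lives in the sort of $x$ or of $y$; by Remark \ref{rem:doesnotdepend}, or simply by renaming, this is harmless.

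The last block, (i) $\iff$ (iii) $\iff$ (iv), comes from Proposition \ref{VCdensity_dprank} applied with $\Delta$ taken to be the collection of \emph{all} first-order formulas $\ph(x,y)$, with $y$ in the sort of $p$ and $x$ ranging over all finite tuples of sorts. This $\Delta$ is trivially closed under boolean combinations, so the proposition applies; moreover the qualifier ``witnessed by formulas in $\Delta$'' attached to clause (i) of Proposition \ref{VCdensity_dprank} then imposes no restriction whatsoever, so that clause reduces to the bare statement $\rkdp(p)\ge k$. With this choice, clauses (ii) and (iii) of Proposition \ref{VCdensity_dprank} are word-for-word conditions (iii) and (iv) of Theorem \ref{summary}.

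There is essentially no hard step here; the only point requiring a moment's care --- and hence the ``main obstacle'', such as it is --- is the bookkeeping of which sort each free variable ranges over when passing between the three source propositions, each of which fixes its own convention for the variable of $p$ versus the parameter variable, together with the verification that invoking Proposition \ref{VCdensity_dprank} with $\Delta$ equal to the full set of formulas is legitimate and does trivialize the ``witnessed by $\Delta$'' clause. Once these identifications are made explicit, the six conditions are seen to be equivalent and the theorem follows.
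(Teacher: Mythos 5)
Your proposal is correct and follows essentially the same route as the paper, which offers no explicit proof of Theorem \ref{summary} beyond the one-line instruction to combine Propositions \ref{prp:equivalence}, \ref{thm:rank_counting}, and \ref{VCdensity_dprank}; your decomposition into the three blocks, the choice of $\Delta$ as the set of all formulas (trivializing the ``witnessed by $\Delta$'' clause), and the variable bookkeeping are exactly the intended assembly.
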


One may obtain a more precise (but also more technical) version of
the theorem by restricting to a set of formulas $\Delta$ closed
under boolean combinations, as in Proposition
\ref{VCdensity_dprank}.

\medskip

\begin{rmk}
    In the proof of (iii) $\implies$ (i) in Proposition \ref{VCdensity_dprank} we only needed that
    $\card{S^{p(y)}_\ph(I')}$ is not $O(|I'|^{k-1})$, whereas the assumption gives more: not $O(|I'|^s)$ for some
    $s>k-1$. Along this line, we note that using clause (v) in Theorem
\ref{summary}, one can deduce that the following  statements are
also equivalent to $\rkdp(p)\ge k$. The statements are more
technical, but the equivalences are
    stronger.
    \begin{itemize}
        \item[(1)] There is $\ph(x,y)$  and $I$ such that $\card{S^{p(y)}_\ph(I')}=\Omega(|I'|^k)$ for $I'$ finite.
        \item[(2)] There is $\ph(x,y)$  and $I$ such that $\card{S^{p(y)}_\ph(I')}=\omega(|I'|^{k-1})$ for $I'$ finite.
        \item[(3)] There is $\ph(x,y)$  and $I$ such that $\card{S^{p(y)}_\ph(I')}$ is not $O(|I'|^{k-1})$ for $I'$ finite.
    \end{itemize}
Where $f=\Omega(g)$ means $g=O(f)$, whereas $f=\omega(g)$ means
that $f$ \emph{strictly} dominates $g$ up to any multiplicative
constant, that is, for every constant $r > 0$ we have $f(n)>r\cdot
g(n)$ for \emph{all} $n$ large enough.

    Note that in order to go from (3) to (1) one may need to change the formula and the indiscernible sequence
    (just like in the equivalence of (iv) and (v) in the theorem), and this is crucial. One may ask whether
    similar statements hold with the same formula and sequence. We have not given it much thought.
\end{rmk}

\medskip

We have recently learned
that Vincent Guingona and Cameron Hill have investigated
$VC^*$-density (and other properties) over indiscernible sequences
in much greater detail in \cite{GuHi}.

\bigskip

Notice that although Proposition \ref{VCdensity_dprank} demonstrates
a nice connection between dp-rank and $VC^*$-density, it is still
quite unsatisfactory. One would hope to connect $VC^*$-density
\emph{in general} to dp-rank. For example, all known example of
dp-minimal theories seem to have $VC^*$-density 1 (most of what is
known has been proved in \cite{ADHMS}). Is this a coincidence, or an
example of a deep connection? Specifically, we ask:

\begin{question}
    Does every dp-minimal theory have $VC^*$-density 1?
\end{question}

It is not so clear how to approach the general question. The proofs
in \cite{ADHMS} are very case-specific and difficult. Any statement
which states a bound for the $VC^*$-density in terms of the dp-rank,
would need to involve achieving finite indiscernible sequences,
hence require nontrivial combinatorial arguments. Some partial
results have been obtained by the authors in a subsequent work, but
not much is known in general.

Thinking about the possible arguments, it came to our attention that
things could be much more manageable if we could concentrate in
single variables; by this we mean that both definitions --of dp-rank
and $VC^*$-density-- could be made by looking at the behavior of the
realizations of the type with respect to \emph{singletons} (for
precise statements, see the two questions that follow this
discussion).
This sort of result is not uncommon in model theory: A theory is
dependent if arbitrarily large sets of \emph{elements} (not
tuples) can not be shattered; if a dependent theory is unstable
then the strict order property can be witnessed with elements,
etc. So it would not be too surprising if both $VC^*$-density and
dp-rank could be defined by just looking at the singletons. The
following question appeared in a first version of this paper.

\begin{question}\label{conjecture}
If $p(x)$ is a (partial) type over $A$ of $dp$-rank greater than
$n$, can this be witnessed by indiscernible sequences of elements?
This is, are there $I_1, \dots, I_n$ mutually $A$-indiscernible
sequences of \emph{singletons} and some $c\models p(x)$ such that
$I_j$ is not indiscernible over $Ac$ for all $1\leq j\leq n$?
\end{question}

This was proved to be false: there are theories which are not
dp-minimal but such that given any element and any two mutually
indiscernible sequences of singletons (over the empty set), at least
one of them is indiscernible over the element. However, since the
theory is not dp-minimal, you can find a type over the empty set
with dp-rank bigger than 1, thus providing a counterexample to the
question even with $A=\emptyset$.

The question, however, turned out to be the wrong question. The
following was proved by Kaplan and Simon in \cite{KaSi}:

\begin{fact}\cite{KaSi}
If $p(x)$ is a (partial) type over $A$ of $dp$-rank greater than
$n$, there is an extension $q$ of $p$ over some $B\supset A$ such
that $q$ has $dp$-rank greater than $n$, witnessed by indiscernible
sequences of singletons.
\end{fact}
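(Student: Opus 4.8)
The plan is to transfer a witness of $\rkdp(p)\ge n+1$ from tuple sorts to a single-element sort, at the necessary cost of enlarging the base; that a base change is genuinely needed is exactly what the failure of Question~\ref{conjecture} recorded above shows. By Proposition~\ref{thm:rank_counting} (equivalence of (i) and (ii), together with the passage between randomness patterns and a single indiscernible sequence carried out in its proof and in Theorem~\ref{summary}), it suffices to prove the following one-sequence reformulation. Suppose there are a formula $\phi(x,\bar y)$, with $\bar y$ of length $l$, and an $A$-indiscernible sequence $I=\langle\bar a_j : j\in\setQ\rangle$ in the sort of $\bar y$ such that every $(n+1)$-element subset of $I$ is the trace on $I$ of a set $\phi(c,\bar y)$ for some $c\models p$. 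Then there are $B\supseteq A$, an extension $q\supseteq p$ over $B$, a formula $\psi(x,y)$ with $y$ a single variable, and a $B$-indiscernible sequence $J$ of single elements with the same property relative to $q$. Granting this, Proposition~\ref{thm:rank_counting} applied over $B$ gives a randomness pattern of depth $n+1$ for $q$ all of whose parameter-sequences are segments of $J$ (hence sequences of single elements), which is the desired conclusion.

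To prove the one-sequence reformulation I would induct on the tuple length $l$; the case $l=1$ is the conclusion. For $l\ge 2$, write $\bar a_j = e_j{}^\frown\bar a_j'$ with $e_j$ a single element and $\bar a_j'$ of length $l-1$; the aim is to absorb the coordinates $\bar a_j'$ into an enlarged base $B$, so that $I$ is replaced by the single-element sequence $\langle e_j : j\in\setQ\rangle$ and $\phi(x,y_0,\bar y')$ by a formula of the form $\psi(x,y_0)=\phi(x,y_0,f(y_0))$ for a $B$-definable function $f$ with $f(e_j)=\bar a_j'$. The naive choice $B_0:=A\cup\{\bar a_j' : j\in\setQ\}$ fails on two counts: it can destroy indiscernibility of $\langle e_j\rangle$ over the base (already dense linear order shows this), and the parameter $\bar a_j'$ attached to $e_j$ then varies with $j$, so no uniform defining formula exists. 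One must instead choose $B$ so that, over it, the deleted coordinates are recovered \emph{uniformly and definably} from the surviving element, i.e.\ $\bar a_j'\in\dcl(e_j B)$ via one formula, \emph{while} the surviving sequence stays indiscernible over $B$. The idea is to build $B$ from copies of the sequence(s) themselves placed in suitably generic position with respect to the relevant cuts, using the coheir/average-type extensions of Section~\ref{sec:extending_indiscernible_sequences} (Proposition~\ref{unique complete 2} and Corollary~\ref{average}) to ensure that adding these copies refines neither the order type of $I$ nor the mutual indiscernibility of the array. Striking this balance --- enough parameters to collapse a coordinate onto a single element, few enough to keep the type ``random'' --- is the technical heart of \cite{KaSi}, and is where I expect essentially all the difficulty to lie.

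Finally, one iterates the coordinate-elimination $l$ times, producing a chain $A=B_0\subseteq\dots\subseteq B_l=B$ and $p=q_0\subseteq\dots\subseteq q_l=q$ with $q_i$ over $B_i$; the output is a $B$-indiscernible sequence of single elements every $(n+1)$-subset of which is the trace of $\psi(c',y)$ for some $c'\models q$, and then the equivalences of Proposition~\ref{thm:rank_counting} (as above) yield the singleton witnesses for $\rkdp(q)\ge n+1$. Since $q\supseteq p$ this is the statement. As with Theorem~\ref{MainTheorem} and Lemma~\ref{the claim2}, only finite, one-sequence-at-a-time bookkeeping enters the finite case; the infinite-rank version follows by running the same scheme with ordinal enumerations, exactly as in Proposition~\ref{unique complete 2}.
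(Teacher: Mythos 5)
There is nothing in the paper to compare against here: the statement is quoted as a \emph{Fact} with a citation to \cite{KaSi}, and no proof is given in this article. So your proposal has to stand on its own, and as it stands it has a genuine gap. You correctly reduce (via Proposition \ref{thm:rank_counting}) to eliminating coordinates from a sequence of $l$-tuples one at a time, but the mechanism you propose for the inductive step --- choose $B$ so that the deleted coordinates $\bar a_j'$ are recovered \emph{uniformly definably} from the surviving singleton $e_j$, i.e.\ $\bar a_j' = f(e_j)$ for a single $B$-definable function $f$, while $\langle e_j\rangle$ stays $B$-indiscernible --- is not attainable in general, and you yourself flag it as the step you have not carried out. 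Already in Example \ref{example} of this paper (the model completion of two independent dense orders $<_1,<_2$), take $I=\langle (a_j,b_j)\rangle$ with $\langle a_j\rangle$ increasing in $<_1$ and $\langle b_j\rangle$ increasing in $<_2$ and the two coordinate sequences mutually generic: by quantifier elimination there are essentially no nontrivial definable functions, so no base $B$ makes $b_j$ a definable image of $a_j$ unless $B$ already lists the $b_j$'s individually, which cannot be packaged into one formula. So the induction cannot close as described.

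What the actual argument in \cite{KaSi} does at this point is not absorption-by-definability but a dichotomy with naming of parameters: writing $\bar a_j=e_j{}^\frown\bar a_j'$, one shows that after moving a suitable subfamily of the tuples (or of the coordinates $\bar a_j'$, indexed over a subset of the order) into the base $B$ --- using exactly the kind of controlled extensions and automorphism arguments of Section \ref{sec:extending_indiscernible_sequences} to keep the remaining array mutually indiscernible over $B$ --- \emph{either} the singleton sequence $\langle e_j\rangle$ already fails indiscernibility over $Bc$, \emph{or} one of the coordinate sequences extracted from $\langle\bar a_j'\rangle$ does; in either case a shorter-tuple witness survives over the larger base, and one induces on $l$. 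The failure of indiscernibility is transferred to a coordinate subsequence, not encoded by a function. Your global scaffolding (reduce to one sequence, induct on tuple length, accept a base extension, iterate) matches the shape of the true proof, but the inductive step as you formulate it would fail, and replacing it with the correct case analysis is where essentially all of the content of \cite{KaSi} lies.
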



The second questions is concerned with the behavior of $VC^*$-density:

\begin{question}\label{question2}
Suppose that $p(y)$ is a type such that for all
\[
\Delta(x,y):=\{\delta_1(x,y),\delta_2(x, y),\dots, \delta_n(x, y)\}
\] where $x$ is a single variable, we have that the $VC^*$-density of
$p(y)$ with respect to $\Delta$ is greater than $d$. Is $d$ the
$VC^*$-density of $p(y)$ with respect to \emph{any} $\Delta$?
\end{question}

Notice that a positive answer to Question \ref{question2} would
imply that we could define the VC-density of a type by considering
formulas $\Delta$ for which $\bar x$ is a singleton. If this were
true,
we would have more tools and evidence for establishing a tighter
connection between dp-rank and $VC^*$-density.

\nocite{Sh715}

\bibliographystyle{abbrv}
\bibliography{dp-minimality-final}

\end{document}